\documentclass[11pt]{amsart}
\usepackage{amssymb,amsmath,amsthm}
\usepackage{comment}
\usepackage{xcolor}
\usepackage{tikz}
\usepackage{enumitem}
\usepackage[utf8]{inputenc}      
\usepackage[T2A,T1]{fontenc}     
\usepackage[russian,english]{babel} 
\usetikzlibrary{patterns}
\newtheorem{theorem}{Theorem}
\newtheorem{lemma}[theorem]{Lemma}
\newtheorem{corollary}[theorem]{Corollary}
\newtheorem{proposition}[theorem]{Proposition}
\theoremstyle{definition}
\newtheorem{definition}[theorem]{Definition}
\newtheorem{question}[theorem]{Question}
\newtheorem*{theorem*}{Theorem}
\newtheorem*{corollary*}{Corollary}
\newtheorem{openproblem}[theorem]{Open Problem}
\newtheorem{remark}[theorem]{Remark}
\newtheorem{example}[theorem]{Example}

\newcommand{\mbbr}{\mathbb R}
\newcommand{\eps}{\varepsilon}
\def\diam{\operatorname{diam}}
\def\Lip{{\rm Lip}}
\def\supp{{\rm supp\,}}
\def\dist{{\rm dist\,}}

\usepackage{scalerel}[2014/03/10]
\usepackage[usestackEOL]{stackengine}
\def\intavg{\,\ThisStyle{\ensurestackMath{%
    \stackinset{c}{0\LMpt}{c}{0\LMpt}{\SavedStyle-}{\SavedStyle\phantom{\int}}}%
    \setbox0=\hbox{$\SavedStyle\int\,$}\kern-\wd0}\int}


\numberwithin{theorem}{section} \numberwithin{equation}{section}
\usepackage{graphicx}
\newcommand{\cube}{\scalebox{0.5}{$\mathrm{Q}$}}
\newcommand{\dube}{\scalebox{0.5}{$\mathrm{D}$}}

\keywords{Doubling measures, Poincar\'e inequality, $p$-admissible weights, weighted Sobolev functions,  removable sets, annular decay property}
\thanks{{\it 2020 Mathematics Subject Classification.} Primary 46E35; Secondary 46E36}
\thanks{R.M.\ was supported by Research Council of Finland Centre of Excellence in Randomness and Structures, Project number 364210.}
\author{Behnam Esmayli}
\address{Department of Mathematical Sciences, P.O.~Box 210025, University of Cincinnati, Cincinnati, OH~45221-0025, U.S.A.{\tt esmaylbm@ucmail.uc.edu}}
\author{Riddhi Mishra}
\address{Department of Mathematics and Statistics, University of Jyv\"askyl\"a, P.O. Box 35 (MaD), FI-40014, University of Jyv\"askyl\"a, Finland. {\tt riddhi.r.mishra@jyu.fi }}

\title{On Removable Sets for Weighted Sobolev Functions}
\setlength{\parskip}{2pt}
\begin{document}

\begin{abstract}We give sufficient geometric conditions, not involving capacities, for a compact null set to be removable for the Sobolev functions on weighted $\mathbb R^n$, defined as the closure of smooth functions in the weighted Sobolev norm. Our porosity conditions are in terms of suitable coverings by cubes. The weights are assumed to be doubling and satisfy a Poincar\'e inequality, which includes, but is not equal to, the famous class of Muckenhoupt weights. Our proofs use ideas and techniques from the theory of analysis on metric spaces.  
\end{abstract}
\maketitle

\section{Introduction}
A possible meta definition of \emph{removability} could be as follows. Suppose $\mathcal A(U)$ is a well-defined class of functions, say, with certain regularities, whenever we fix an open set $ U \subset \mathbb R^n$.  We ask that $A(U)$ be a subset of $A(U')$ in a canonical way whenever $U \subset U'$.

We say a compact set $E \subset U$ is \emph{removable} for $\mathcal{A}(U)$ if every function in $\mathcal{A}(U\setminus E)$ can be canonically identified with a function in $\mathcal{A}(U)$; possibly after being extended onto $E$ in some canonical way. If $E$ is removable for all open sets $U \supset E$, we say that $E$ is removable for $\mathcal A$, without referring to any particular $U$. Often it happens that removability of $E$ for one open set implies its removability for all open sets.

Here are some elementary examples. No nonempty set $E$ is removable for $\mathcal A=C^\infty$. Because if $x_0\in E$, then $u(x)=|x-x_0|^\beta$ for $\beta<1$ is in $C^\infty(\mathbb R^n \setminus E)$, but it does not agree with any $C^\infty$ function defined on $\mathbb R^n$. On the other hand, as long as $\mathbb R^n \setminus E$ is dense in $\mathbb R^n$, the set $E$ will be removable for the class of $\alpha$-H\"older continuous functions, for any fixed $0<\alpha\le 1$. This is because every $\alpha$-H\"older continuous function has a unique $\alpha$-H\"older continuous extension to the closure of its domain.

Between the two extremes, the questions of removability become delicate and involved. Removability has been extensively studied for harmonic functions, e.g.,  in \cite{Ahlf-Beur} and \cite{Hedberg:74}; for quasiconformal mappings, e.g.,  in \cite{gold-vod:ncp}, \cite{kaufman-wu}, and \cite{jmw}; and for Sobolev functions, e.g.,  in \cite{Hedberg:74}, \cite{jmw}, \cite{koskela:hyperplane}, \cite{jones-smirnov}, \cite{ntala}.

Note that we have not specified where the functions in $\mathcal A(U)$ are defined and this is important. For example, in \cite{jones-smirnov} and \cite{ntala} the definition of removability for Sobolev functions includes the assumption that the functions are continuous on $U$ and Sobolev on $U\setminus E$ where $E$ is to be the removable set. Removability questions for analytic functions also often include the apriori assumption of continuity on all of the domain.

In this paper we study the question of removability of sets for Sobolev functions on weighted $\mathbb R^n$, i.e.\, $\mathbb R^n$ equipped with a Borel measure $\mu$. The associated Sobolev spaces $H^{1,p}(\Omega;\mu)$ and $W^{1,p}(\Omega;\mu)$ will be thoroughly reviewed in Section~\ref{sec:prelim}.
\begin{definition}\label{def:remov-sob}
    Let $\Omega \subset \mathbb R^n$ be an open set. We say a compact $\mu$-null set $E \subset \Omega$ is removable for $H^{1,p}(\Omega;\mu)$ if every $u \in H^{1,p}(\Omega \setminus E;\mu)$ is also in $H^{1,p}(\Omega;\mu)$.
\end{definition}
 The definition of removability for $W^{1,p}$ is exactly the same except for changing ``H'' to ``W''.

Note that in both cases of $H^{1,p}$ and $W^{1,p}$, it is not necessary to (be able to) extend $u$ to $E$ or define it on $E$ (see Example~\ref{exmpl:x-x0}). Because $E$ is a null set, $u$, as it stands, is already in $L^p(\Omega;\mu)$. Since the $L^p$-norm is also indifferent to addition or omission of $E$, the question of removability is only about the regularity/differentiability.

In the case of $H^{1,p}$, removability is asking whether every function that is a limit, in the weighted Sobolev norm, of functions in $C^\infty(\Omega \setminus E)$ is also a Sobolev limit of functions in $C^\infty(\Omega)$.

In the case of $W^{1,p}$, we are asking whether every function in $L^p(\Omega;\mu)$ that is weakly differentiable on $\Omega \setminus E$ and its derivatives are in $L^p(\Omega;\mu)$ is also weakly differentiable on $\Omega$. Here the subtlety is the difference in the class of test functions for which the integration by parts \eqref{intg-parts} is required to hold.

In our main results, the measures will be of the form  $d\mu = wdx$ where $dx$ is the Lebesgue measure on $\mathbb R^n$ and $w$ is \emph{a weight} in the following sense.
 \begin{definition}[Weight]
A weight on $\mathbb R^n, n\ge 1$, is a nonnegative locally Lebesgue-integrable function.
 \end{definition} %
 We discuss weights and recall important well-known classes of them in Section~\ref{sec:weights}. Let us now see a basic example of removability.
\begin{example}\label{exmpl:x-x0}
    Consider $u(x)=|x-x_0|^\beta$, $\beta<0$, where $E=\{x_0\}$. We restrict the domain to $B=B(x_0,2)$. Suppose that the weight $w$ and $\beta<0$ are such that $u$ has finite weighted Sobolev norm defined in~\eqref{normm}. Since $u \in C^\infty(B \setminus E)$, we have $u \in H^{1,p}(B \setminus E;wdx)$. For $j \in \mathbb N$, we can replace $u$ on the ball $B(x_0,1/j)$ by a smooth function and obtain $u_j \in C^\infty(B)$. For reasonable weights $w$, the sequence $u_j$ will converge in $H^{1,p}(B;wdx)$ to $u$, proving that $u \in H^{1,p}(B;wdx)$. Observe that we cannot extend $u$ to $E$ continuously. However, this is not necessary for it to be a Sobolev function.
\end{example}
Once again, we emphasize that the question of removability of $E$ is not about extendability to $E$. As Example~\ref{exmpl:x-x0} demonstrates, even if we work with the so-called \emph{precise representatives} of functions, a function can have an essential discontinuity along the set $E$, and yet be in $H^{1,p}(\mathbb R^n;\mu)$.

Existing removability results are for the unweighted Sobolev spaces, where in fact $H^{1,p}$ and $W^{1,p}$ coincide. Vodopjanov-Goldstein~\cite{gold-vod:ncp} characterized removable sets for $W^{1,p}$ as null sets for suitable condenser capacities. An elegant proof of their theorem can be found in~\cite{kolsrud}.

However, capacities are practically impossible to calculate. Wu~\cite{jmw} and Koskela~\cite{koskela:hyperplane} found sufficient geometric conditions in terms of porosity for removability. These conditions do not use capacities.

The main result of this paper is in the spirit of \cite{jmw,koskela:hyperplane}  where we find a sufficient geometric condition for removability of sets for Sobolev functions. This condition is in terms of porosity and avoids any reference to capacity. The novelty is that now we work with weighted Sobolev spaces. Before delving into details, let us complete our literature review. 

In \cite{KoShTu}, the porosity condition of \cite{jmw,koskela:hyperplane} were adapted to yield a removability result in the metric space setting. Here the Sobolev space is in the sense of Newtonian-Sobolev functions defined via upper gradients. In \cite{KoShTu} removability is a corollary to a removability result for the Poincar\'e inequality. It was shown by Koskela~\cite{koskela:hyperplane} that removability for Sobolev functions is equivalent to removability for the Poincar\'e inequality, and the proof generalizes to the metric setting of~\cite{KoShTu}.

Other removability results for Sobolev functions and quasiconformal mappings can be found in~\cite{jones-smirnov}. Note that in~\cite{jones-smirnov} and~\cite{ntala}, the definition of removability of a set $E$ includes the assumptions that the functions are apriori continuous on $U$ in addition to being Sobolev on $U\setminus E$.

Removability of product Cantor sets $E \times F$ are studied in \cite{jmw}, \cite{ko-ra-zh} and \cite{bindini-rajala}. For weighted Sobolev spaces, but only with very specific choices of weights, removability has been studied, e.g.,  in \cite{Futamura-mizuta} and \cite{karak1}.

More recent results on removability can be found in \cite{bieg-warma}, \cite{ntala}, \cite{Panu23}, \cite{BBL23}, etc. The literature is extensive and we apologize for possible important omissions. We refer to the works mentioned and the references within them for more.

Removability questions for Sobolev spaces have close ties to, and were originally motivated by removability questions for quasiconformal mappings. Quasiconformal removability will not be addressed in this paper; we refer to \cite{Ahlf-Beur}, \cite{gold-vod:ncp}, \cite{aseev-syvcek}, \cite{kaufman-wu}, \cite{jmw}, and \cite{jones-smirnov}, and references therein.

We are ready to describe the main results of this paper about removability of sets for the Sobolev space $H^{1,p}(\Omega;\mu)$. Fix a compact set $E\subset \mathbb R^n$. Suppose that for each $k \in \mathbb N$, there is a covering $\mathcal Q_k$ of $E$ by pairwise disjoint cubes such that each cube $Q \in \mathcal Q_k$ contains a ring $R\subset Q$ along its perimeter that is away from $E$. The ratio of the thickness of the ring $R$ to the side length of the cube $Q$ is denoted by $\alpha_{\cube}$.

We will say that $E$ is $(s,p)$-porous, $s > p \ge 1$, if the measures and side lengths of the cubes and their rings in the (nested) sequence of coverings $\{\mathcal Q_k\}_{k=1}^\infty$ satisfy a particular divergence condition. See Section~\ref{sec:remov} for the details.

We say that a weight $w$ on $\mathbb R^n$ is $p$-admissible if the measure $d\mu=wdx$ is doubling and satisfies a $p$-Poincar\'e inequality (Section~\ref{sec:dblg-ppi-prelim}). The main result of our paper is the following.
\begin{theorem*}[\textbf{A}]
   Suppose $w$ is a $p$-admissible weight on $\mathbb R^n$, where $p\ge 1$, $n\ge 2$. If $E\subset \mbbr^n$ is $(s,p)$-porous for some $s>p$, then it is removable for $H^{1,s}(\mathbb R^n;wdx)$.  
\end{theorem*}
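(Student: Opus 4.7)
The plan is to reduce to smooth representatives, then build at each porosity level $k$ a smooth modification of $u$ that equals $u$ outside the cubes $\mathcal Q_k$ and is constant on the inner hole of each cube. By the definition of $H^{1,s}$, it suffices to approximate an arbitrary $u \in C^\infty(\mathbb R^n \setminus E)$ of finite weighted Sobolev norm by smooth functions on all of $\mathbb R^n$; since $E$ is $\mu$-null, the norm of $u$ over $\mathbb R^n \setminus E$ equals its norm over $\mathbb R^n$ after extending $u$ and its classical gradient arbitrarily to $E$.

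For each $k$ and each $Q \in \mathcal Q_k$ with ring $R_Q$, I would choose a smooth cutoff $\eta_Q$ supported in $Q$, equal to $1$ on the inner hole $Q\setminus R_Q$, with $|\nabla \eta_Q| \lesssim (\alpha_{\cube}\,\ell(Q))^{-1}$, and let $u_Q$ denote the $w$-weighted mean of $u$ on a ball inside $R_Q$, where $u$ is smooth. Set
$$v_k := u + \sum_{Q \in \mathcal Q_k} \eta_Q(u_Q - u).$$
Then $v_k$ equals the constant $u_Q$ on the inner hole of $Q$ (hence smooth across $E \cap Q$), is smooth on $R_Q$, and equals $u$ outside $\bigcup Q$, so $v_k \in C^\infty(\mathbb R^n)$.

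To prove $v_k \to u$ in $H^{1,s}(\mathbb R^n;wdx)$, I would split $\nabla(v_k - u) = \sum_Q [\nabla \eta_Q(u_Q - u) - \eta_Q\nabla u]$. The second term is controlled by $\int_{\bigcup Q_k}|\nabla u|^s w\,dx$, which vanishes by dominated convergence as the covers shrink to the $\mu$-null set $E$. For the first term, I would apply a weighted $(s,p)$-Sobolev--Poincar\'e inequality on $R_Q$, available because $w$ is $p$-admissible and $s>p$:
$$\int_{R_Q}|u - u_Q|^s w\,dx \lesssim \ell(Q)^s\, \mu(R_Q)^{1-s/p} \Bigl(\int_{\sigma R_Q}|\nabla u|^p w\,dx\Bigr)^{s/p}.$$
Combined with the bound on $|\nabla \eta_Q|$ this yields a per-cube estimate $\alpha_{\cube}^{-s}\mu(R_Q)^{1-s/p}\bigl(\int_{\sigma R_Q}|\nabla u|^p w\bigr)^{s/p}$. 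Summing, using Jensen to lift $L^p$ to $L^s$ and doubling plus bounded overlap of the $\sigma R_Q$, the total is dominated by the $(s,p)$-porosity divergence series times $\int_{\mathcal N_k}|\nabla u|^s w\,dx$ on a neighborhood $\mathcal N_k$ of $\bigcup Q_k$, which tends to $0$ with $k$. The $L^s$-estimate on $v_k - u$ follows from the same ingredients with an extra factor of $\ell(Q)$.

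The main obstacle I anticipate is pinning down a weighted $(s,p)$-Sobolev--Poincar\'e inequality on ring regions $R_Q$ (rather than on balls) with $u_Q$ as reference constant, and matching the resulting per-cube bounds to the exact divergence condition that defines $(s,p)$-porosity; this presumably requires the annular decay property of $\mu$ hinted at in the keywords, together with a chaining or telescoping argument across the nested levels $\{\mathcal Q_k\}$.
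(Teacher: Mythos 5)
There is a genuine gap, and it sits exactly at the two places your sketch leans on hardest. First, the ``weighted $(s,p)$-Sobolev--Poincar\'e inequality'' you invoke on the rings, with oscillation measured in $L^s$ but the gradient only in $L^p$, is not available from $p$-admissibility for an arbitrary $s>p$: such a gain holds only up to the Sobolev conjugate exponent determined by the homogeneous dimension of $\mu$ (already for $w\equiv 1$, $n=2$, $p=1$ it fails for every $s>2$), whereas the theorem allows any $s>p$. The paper's proof is structured precisely to avoid any such gain: it extends $u$ from the ring $R$ to the cube $Q$ with an operator bounded separately on $u$ and $\nabla u$ in $L^p$ (Theorem~\ref{extsob}, built from the Whitney-type reflection in Lemma~\ref{lem:reflect-one-time}), uses H\"older only in the harmless direction $L^s(R)\to L^p(R)$ (producing the factor $\mu(R)^{1/p-1/s}$), proves that the modified functions form a Cauchy sequence in $H^{1,p}$, and only at the very end upgrades ``$u\in H^{1,p}$ with $u,|\nabla u|\in L^s$'' to $u\in H^{1,s}$ via the nontrivial Proposition~\ref{P1p=H1p}. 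Your plan of approximating directly in $H^{1,s}$ forces the false gain inequality (or, if you retreat to the $(s,s)$-Poincar\'e that $s$-admissibility does give, you lose the $\mu(R)^{(s-p)/((s-1)p)}$ factor and can no longer match the hypothesis, which is an $(s,p)$-condition).

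Second, the summation step misuses porosity. The defining condition \eqref{adv3-prime} is a \emph{divergence} of $\sum_k(\cdots)^{1-s}$; it does not give per-level smallness, and multiplying it against ring integrals that shrink does not make ``the total tend to $0$ with $k$.'' What it does give, in combination with the convergence of $\sum_k\sum_{\mathcal Q_k}\int_R|u|^s\,d\mu$ and $\sum_k\sum_{\mathcal Q_k}\int_R|\nabla u|^s\,d\mu$ (disjointness of the rings within and across levels), is that a quotient has $\liminf$ zero, i.e.\ smallness only along a subsequence -- which suffices, but requires the specific discrete H\"older manipulation that produces exactly the exponents $\alpha_{\cube}^{-sc_1/(s-1)}$ and $\mu(R)^{(s-p)/((s-1)p)}$; your sketch never performs it, and your $\alpha$-bookkeeping (a cutoff cost $\alpha^{-1}$ plus an unquantified Poincar\'e constant on a thin ring, which for general admissible weights degrades as $\alpha\to0$ by chaining) is never matched to the constant $c_1=\log_2C_1$ appearing in the definition of porosity. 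Your cutoff-to-a-constant modification is a legitimate alternative in spirit to the paper's extension (and the reduction to smooth $u$ is fine), but the hard quantitative content -- an oscillation estimate on thin rings with tracked $\alpha$-dependence for arbitrary $p$-admissible weights -- is exactly what you defer as an ``anticipated obstacle,'' and it is the paper's main technical work; no annular decay property rescues this, since that is used in the paper only for sufficient conditions ensuring porosity, not in the proof of Theorem~A itself.
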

By appealing to the well-known open-ended property of the Poincar\'e inequality due to Keith and Zhong~\cite{Keith-Zhong:ann}, we deduce a removability result for~$H^{1,p}(\mathbb R^n;wdx)$:
\begin{theorem*}[\textbf{B}]
    Suppose $w$ is a $p$-admissible weight on $\mathbb R^n$, where $p > 1$, $n\ge 2$. Then there exists $\eps_0 >0$ such that for every $0<\eps \le \eps_0$, every $(p,p-\eps)$-porous set is removable for $H^{1,p}(\mathbb R^n;wdx)$.
\end{theorem*}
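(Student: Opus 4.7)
The plan is to reduce Theorem B to Theorem A by exploiting the self-improvement (open-endedness) of the Poincar\'e inequality. Since $w$ is $p$-admissible with $p>1$, the measure $d\mu = wdx$ is doubling and supports a $p$-Poincar\'e inequality. The Keith--Zhong theorem \cite{Keith-Zhong:ann} then yields $\eps_0>0$ such that $\mu$ in fact supports a $(p-\eps_0)$-Poincar\'e inequality. This is the only nontrivial analytic input; everything else is a bookkeeping step.

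Next, I would observe that a $q$-Poincar\'e inequality implies a $q'$-Poincar\'e inequality for every $q'\ge q$ by H\"older's inequality applied on the right-hand side. Consequently, for every $0<\eps\le \eps_0$, the measure $\mu$ supports a $(p-\eps)$-Poincar\'e inequality. Since doubling is unchanged, this means $w$ is $(p-\eps)$-admissible for every such~$\eps$.

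Finally, fix $0<\eps\le \eps_0$ and let $E\subset \mbbr^n$ be $(p,p-\eps)$-porous. Apply Theorem~A with the admissibility exponent $p-\eps$ playing the role of ``$p$'' and with $s=p$. The hypothesis $s>p-\eps$ is satisfied because $\eps>0$, and the hypothesis that $w$ is $(p-\eps)$-admissible was established in the previous step. Theorem~A then gives that $E$ is removable for $H^{1,p}(\mbbr^n;wdx)$, which is precisely the conclusion of Theorem~B.

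The only substantive obstacle is ensuring that the hypotheses of Keith--Zhong are met in our setting; this is immediate since $(\mbbr^n,|\cdot|,\mu)$ is a complete doubling metric measure space supporting a $p$-Poincar\'e inequality with $p>1$. Note that the proof crucially uses $p>1$, reflecting the fact that the open-ended property (and hence Theorem~B) fails at $p=1$.
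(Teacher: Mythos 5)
Your proposal is correct and follows essentially the same route as the paper: invoke Keith--Zhong to get a $(p-\eps_0)$-Poincar\'e inequality and then reduce to Theorem~A with $s=p$. The only (immaterial) difference is bookkeeping: the paper's displayed proof lowers the porosity exponent from $p-\eps$ to $p-\eps_0$ via Remark~\ref{rmk:smaller-p-porous} and applies Theorem~A once with admissibility exponent $p-\eps_0$, whereas you raise the admissibility exponent from $p-\eps_0$ to $p-\eps$ via Proposition~\ref{prop:q-admiss} --- a variant the paper itself records as an alternative proof immediately after Theorem~\ref{thm:jyu:-1}.
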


Theorem~(A) and Theorem~(B) are re-stated and proved as Theorem~\ref{jy1} and Theorem~\ref{thm:jyu:-1}, respectively.

Let us provide the sketch of our proof of Theorem~A. The proof is inspired by the proof in \cite{jmw} but requires many new tools since we work with arbitrary $p$-admissible weights. Fix $u \in H^{1,s}(\mathbb R^n \setminus E; \mu)$. Under the $(s,p)$-porosity condition on $E$, we must show that $u \in H^{1,s}(\mathbb R^n; \mu)$. Here is the rough sketch of the argument (compare to Example~\ref{exmpl:x-x0}):
\begin{enumerate}
    \item \textit{Surround the exceptional set:} From porosity, at scale $k$, we cover the set $E$ with the union of (small) disjoint cubes $Q$ such that a ring $R$ near its boundary, of relative thickness $\alpha=\alpha_{\cube}$, is away from $E$.
    \item \textit{Re-define the function:} We re-define $u$ in each $Q$ by Sobolev-extending the restriction of $u|_R$ to $Q$ (and replace the original values there). We thus obtain functions $u_k \in H^{1,p}(\mathbb R^n;\mu)$. We retain good control over the Sobolev norm of this extension.
    \item \textit{Recover $u$ as a Sobolev limit:} The porosity condition on $E$ is finetuned so that we can prove that $\{u_k\}_k$ has a Cauchy subsequence in $H^{1,p}(\mbbr^n;\mu)$ and hence a limit in $H^{1,p}(\mbbr^n;\mu)$. But the limit is just the original function $u$ since the $\mu$-measure of $\cup \mathcal Q_k$ converges to zero.
    \item \textit{Upgrade to $H^{1,s}$:} We have shown that $ u \in H^{1,p}(\mbbr^n;\mu)$ and we know that $u$ and $|\nabla u|$ are in $L^s(\mathbb R^n;\mu)$ by the assumptions. These do imply that $u \in H^{1,s}(\mathbb R^n;\mu)$, but not at all trivially. We prove this in Proposition~\ref{P1p=H1p}.
\end{enumerate}
In \cite{jmw}, the extension in step~(2) is achieved by using (a)~absolute continuity of Sobolev functions on a.e.\ line parallel to the axes, (b)~Fubini's theorem and (c)~a radial affine extension; which are not available in the presence of weights. We construct our extension by using the $p$-admissibility assumption. Also, step~(4) is obvious in \cite{jmw} as they work with $W^{1,p}$.

It is worth giving an example that illustrates the fact that step~(2) above is not just about finding any Sobolev extension, but rather the right one.
\begin{example}\label{exmp:bad-ext}
Consider the unweighted $\mathbb R^1$, i.e.\ $w\equiv 1$. Let $E=\{0\} \subset \mathbb R^1$. Let $u \equiv 0$ on $\mathbb R^1 \setminus E$. For $0<\delta <1$, let $u_\delta$ be the extension to $\mathbb R^1$ of $u|_{\mathbb R^1\setminus (-\delta,\delta)}$ given by $1-\frac{1}{\delta}|x|$ on $(-\delta,+\delta)$ and zero elsewhere. The functions $u_\delta$ are in $H^{1,1}(\mathbb R^1;dx)$, because they are Lipschitz, and their Sobolev norm is bounded by a constant independent of $\delta$. Moreover, $u_\delta(x) \to u(x)$ for almost every $x$. However, no subsequence of $u_\delta$ converges in $H^{1,1}(\mathbb R^1;dx)$ to $u$. Thus, this choice of extensions will not show that $u$ is a Sobolev function on $\mathbb R^1$, although, obviously, $u$ is in $H^{1,p}(\mathbb R^n;dx)$ for all $p\ge 1$.
\end{example}
Unlike in Example~\ref{exmp:bad-ext}, the extension that we construct in Section~\ref{sec:Proof-of-once-reflect} strongly reflects the behavior of the function near the set $E$, i.e.\ on the rings $R$. Here we make the most significant use of the $p$-Poinca\'e inequality assumption.

\subsection*{Outline of the paper}
In Section~\ref{sec:prelim} we recall definitions regarding weights, doubling, and $p$-Poincar\'e inequality. We then define the Sobolev spaces $H^{1,p}$ and $W^{1,p}$. We show why $H^{1,p}$ is the more natural space on a weighted $\mathbb R^n$, at least for removability questions (Theorem~\ref{singletonnotremov}). We also prove the local nature of removability. Finally, we discuss density of Lipschitz functions in $H^{1,p}$.

In Section~\ref{sec:extension-lemmas} we study the question of Sobolev extensions from the ring to the cube. This is a key step in the proof of the main removability results. Lemma~\ref{lem:reflect-one-time} is the technical backbone of the entire argument and we postpone its proof to Section~\ref{sec:Proof-of-once-reflect}.

In Section~\ref{sec:remov} we prove the main theorems about removability of porous sets. Section~\ref{sec:equiv-por} contains sufficient conditions for $(s,p)$-porosity. In Section~\ref{sec:examples}~we apply our porosity condition to show removability of certain product Cantor sets.

In Section~\ref{sec:Proof-of-once-reflect} we prove Lemma~\ref{lem:reflect-one-time}, and in Section~\ref{sec;appndx} we prove a result needed in the step~(4) of the proof sketch above.

\subsection*{Notation} Cubes will always have sides parallel to the coordinate axes. For a cube $Q$ and $\lambda >0$, $\lambda Q$ stands for the cube concentric to $Q$ and of side length $\lambda$ times that of $Q$. They will be open unless specified otherwise. Diameter and side length of $Q$ are denoted by $\diam Q$ and $\ell(Q)$, respectively. We denote the characteristic function of a set $A$ by $\chi_A$.

On $A \subset \mathbb R^n$, the set of all Lipschitz, $C^1$-regular, and smooth functions  will be denoted by $\Lip (A)$, $C^1(A)$, and $C^\infty(A)$, respectively. We write $\Omega \subset \subset \Omega'$ to mean that $\overline{\Omega}$ is compact and $\overline{\Omega} \subset \Omega'$. We write, e.g., $u \in \Lip_{loc}(\Omega')$ if $u \in \Lip(\Omega)$ for every $\Omega \subset \subset \Omega'$.

Measure to us means an \emph{outer measure}. It is Borel if Borel sets are measurable. When $0<\mu(A)<\infty$, we write 
$$
\intavg_A u\, d\mu:=\frac{1}{\mu(A)} \int_A u \, d\mu.
$$
we also use $u_{A}$ for the same quantity, especially when $A$ is a cube.

We have avoided indexing when possible. So, e.g., we use $\{u_k\}_k$ rather than $\{u_k\}_{k=1}^\infty$. Similarly, when $\mathcal{Q}$ is a collection of, say, cubes, we write $ \sum_{\mathcal{Q}}$ (union) rather than $\sum_{Q \in \mathcal{Q}}$, and $\cup \mathcal{Q}$ rather than  $\cup_{Q \in \mathcal{Q}} Q$.

We adhere to the standard usages of $\lesssim$ and $\approx$.

\section{Weights and Weighted Sobolev Functions}\label{sec:prelim}
\subsection{Doubling measures and Poincar\'e inequality}\label{sec:dblg-ppi-prelim}
We will adhere to the conventions in the book~\cite{Evans-Gariepy} regarding measure theory terminology. In particular, a measure means \emph{an outer measure}, and a Borel measure is one where all Borel sets are measurable.
\begin{definition}[doubling]
    We say a Borel measure $\mu$ on $\mathbb R^n$ is doubling if there exists a constant $C$ such that
\begin{equation}
    \label{dbln}
    \mu(2Q) \le C \mu(Q), \; \text{for every cube $Q \subset \mathbb R^n$,}
\end{equation}
and that for one cube, hence for all cubes, $0 < \mu(Q) < \infty$.
\end{definition}
Doubling can be equivalently stated with balls in place of cubes, i.e.\
$$
    \mu(B(x,2r)) \le C \mu(B(x,r)), \; \text{for every $x \in \mathbb R^n$ and every $r>0$.}
$$
On $\mathbb R^n$, doubling measures are $\sigma$-finite and $\mu(\mathbb R^n)=\infty$. Moreover, there exist constants $\delta' \ge n $, and $ 0 < \delta \le n$, and $C_1$ and $C_2>0$ such that
\begin{equation}
    \label{dbln-homg}
    C_2\Big(\frac{r}{\rho}\Big)^{\delta'} \le \frac{\mu(B(y,r))}{\mu(B(x,\rho))} \le C_1\Big(\frac{r}{\rho}\Big)^\delta ,
\end{equation}
for all $x \in \mathbb R^n$, all $0 < r \le \rho$ and all $y \in B(x,\rho)$.

The first inequality follows simply from the doubling condition, and $\delta'$ cannot be smaller than the Assouad dimension, which is $n$. The last inequality uses doubling and the connectedness of $\mathbb R^n$, and $\delta$ cannot be more than the Hausdorff dimension, which is again $n$; see \cite[Chapter~13]{Hei:01}.

For these and more facts on doubling measures, including in arbitrary metric spaces, see \cite{Hei:01}. Note that in \cite{Hei:01}, $y$ is taken to be $x$, but because $\mu(B(x,\rho))$ is comparable to $\mu(B(y,\rho))$ for all $y \in B(x,\rho)$ with a constant that only depends on the doubling constant of $\mu$, the version above follows immediately.

\begin{definition}[$p$-Poincar\'e inequality]\label{def:p-PI}
    We say that a Borel measure $\mu$ on $\mathbb R^n$ satisfies a $p$-Poincar\'e inequality, $1 \le p <\infty$, if $0<\mu(Q)<\infty$ for all cubes $Q \subset \mathbb R^n$ and there exists $C>0$ such that
\begin{equation}
        \label{p-pi}
        \intavg_Q|u-u_{\cube}|\, d\mu \le C (\diam Q)\Big(\intavg_{Q} |\nabla u|^p\, d\mu\Big)^{1/p},
\end{equation}
for every cube $Q$ and every bounded  $u \in C^\infty(Q)$.
\end{definition}
A more relaxed definition of a $p$-Poincar\'e inequality requires that there exist $\lambda \ge 1$ such that
\begin{equation}
        \label{lambda-p-pi}
        \intavg_Q|u-u_{\cube}|\, d\mu \le C (\diam Q)\Big(\intavg_{\lambda Q} |\nabla u|^p\, d\mu\Big)^{1/p},
\end{equation}
for every cube $Q$ and every bounded smooth $u \in C^\infty(\lambda Q)$.

By Remark~\ref{reM:N1p=H1p} and a well-known result due to Haj\l{}asz-Koskela \cite{Haj:Ko:met}, the weaker version \eqref{lambda-p-pi} implies \eqref{p-pi}. For technical reasons we decide to have $\lambda =1$ as part of our definition. (For example, the proof of the Sobolev extension result in Section~\ref{sec:Proof-of-once-reflect} is more streamlined because we do not have to dilate the cubes.)
\begin{definition}[$p$-admissible]\label{def:admiss}
    We say a Borel measure on $\mathbb R^n, n\ge 1$, is $p$-admissible, $p \ge 1$, if it is doubling and satisfies a $p$-Poincar\'e inequality.
\end{definition}

\subsection{\protect \boldmath \texorpdfstring{$p$}{p}-Admissible weights}\label{sec:weights}
Recall that \textit{a weight} is a nonnegative locally Lebesgue-integrable function $w(x)$ on $\mathbb R^n$. We use $dx$ for the Lebesgue measure on $\mathbb R^n$, with $n\ge 1$ understood from the context.

Given a weight, the expression
$$
\mu(A):=\inf\Bigl\{\int_{A'}w\, dx: \text{$A'$ is Lebesgue-measurable, $A\subset A'$} \Bigr\}
$$
defines a Borel measure on $\mathbb R^n$ that is, clearly, absolutely continuous with respect to the Lebesgue measure. The Radon-Nikodym derivative of $\mu$ is Lebesgue-a.e.\ equal to $w$. Therefore, we encode the measure $\mu$ defined above by the notation $d\mu = wdx$.

We identify the measure with the weight. Thus, for instance, expressions such as \textit{a doubling weight} are self-explanatory. Our main results will require \emph{$p$-admissible weights}, i.e.\ weights such that $d\mu=wdx$ is $p$-admissible, i.e.\ $\mu$ is doubling and satisfies a $p$-Poincar\'e inequality (Definition~\ref{def:p-PI}).

Because doubling measures on $\mathbb R^n$ are $\sigma$-finite, if a doubling measure $\mu$ is absolutely continuous with respect to the Lebesgue measure, then by the Radon-Nikodym theorem there exists a weight $w(x)$ such that
$$
\mu(A)=\int_A w\, dx,
$$
(at least) for all Lebesgue-measurable $A \subset \mathbb R^n$. An interesting problem \cite[Open Problem A.18]{Bj:Bj:11} asks, basically, whether every $p$-admissible measure comes from a weight (See Theorem~\ref{all-admss-Ap} below for dimension $n=1$.):
\begin{openproblem}[\cite{Bj:Bj:11}]
    Is every $p$-admissible measure on $\mathbb R^n$, $n\ge 2$, absolutely continuous with respect to the Lebesgue measure?
\end{openproblem}
Regardless of the answer, the class of $p$-admissible weights on $\mathbb R^n$, is already quite vast. Here is a non-exhaustive list (assume $n\ge 2$):
\begin{itemize}[topsep=.5ex]
    \item Every Muckenhoupt $A_p$ weight, $p\ge 1$, is $p$-admissible \cite{Fa:Je:Ke:82}.
    \item Let $ -n < \gamma$. Then the weight $|x|^\gamma$ is $p$-admissible for all $p\ge 1$ \cite{Heikima}, \cite{J:Bj:01}.
    \item Let $F\subset \mathbb R^n$ be a Lebesgue-null set and suppose $\mathbb R^n \setminus F$ is a uniform domain. Then $w(x)=\textup{dist}(x,F)^\beta$ is $1$-admissible for all $\beta \ge 0$ \cite{Keith:Modu}.
    \item The weight $w(x_1,x_2)=|x_1|$ on $\mathbb R^2$ is $p$-admissible for every $p>1$ \cite{Heikima}.
    \item  If $w(x)$ is the Jacobian of a quasiconformal map $f\colon \mathbb R^n \to \mathbb R^n$, $n \ge 2$, then for every $1 \le p < n$, the weight $w^{1-p/n}$ is $p$-admissible \cite{Hei:Ko:94}. (\cite{J:Bj:01} for $p=1$.)
    \item  If $w(x)$ is a positive superharmonic functions on $\mathbb R^n$, then $w$ is $p$-admissible for all $p\ge 1$ \cite[Theorem~3.59]{Heikima}.
\end{itemize}
By H\"older's inequality, if a measure satisfies a $p$-Poincar\'e inequality, then it satisfies an $s$-Poincar\'e inequality for every $s > p$, which leads to the following: 
\begin{proposition}\label{prop:q-admiss}
    Every $p$-admissible measure (respectively, every $p$-admissible weight) is $s$-admissible for every $s>p$.
\end{proposition}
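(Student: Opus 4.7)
The plan is to separate the two requirements of $p$-admissibility. Doubling is a purely measure-theoretic condition that has nothing to do with $p$, so it is immediately preserved when we change the exponent. Thus the proposition reduces to showing that for $s > p$, a measure satisfying a $p$-Poincar\'e inequality also satisfies an $s$-Poincar\'e inequality.

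The heart of the argument is Jensen's (equivalently, H\"older's) inequality applied to the normalized integral. Fix a cube $Q$ with $0 < \mu(Q) < \infty$; the measure $d\mu / \mu(Q)$ is a probability measure on $Q$, and on any probability space the $L^p$-averages are monotone non-decreasing in $p$. Concretely, with exponents $s/p > 1$ and its dual, H\"older gives
$$
\Bigl(\intavg_{Q} |\nabla u|^p\, d\mu \Bigr)^{1/p} \le \Bigl(\intavg_{Q} |\nabla u|^s\, d\mu \Bigr)^{1/s}
$$
for every bounded $u \in C^\infty(Q)$.

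Plugging this into the $p$-Poincar\'e inequality \eqref{p-pi} yields
$$
\intavg_Q |u - u_{\cube}|\, d\mu \le C(\diam Q) \Bigl(\intavg_{Q} |\nabla u|^p\, d\mu \Bigr)^{1/p} \le C(\diam Q) \Bigl(\intavg_{Q} |\nabla u|^s\, d\mu \Bigr)^{1/s},
$$
which is exactly the $s$-Poincar\'e inequality with the same constant $C$. Combined with doubling, this proves $s$-admissibility.

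I do not expect any real obstacle here: the statement is essentially the remark preceding it in the excerpt made formal, and the only mild point to keep in mind is the convention $\lambda = 1$ built into Definition~\ref{def:p-PI}, which is preserved automatically since the H\"older step happens on the same cube $Q$ and does not require any dilation.
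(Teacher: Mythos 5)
Your proposal is correct and matches the paper's own argument: the paper derives this proposition exactly from H\"older's inequality applied to the normalized integral over the same cube (monotonicity of $L^p$-averages), with doubling preserved trivially since it does not involve $p$.
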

So, $1$-Poincar\'e inequality, respectively, $1$-admissibility, is the strongest.
\subsection{Sobolev spaces on weighted \texorpdfstring{$\mathbb R^n$}{Rn}}
There are multiple ways to define a Sobolev space of functions on (open subsets of) the metric measure space $(\mathbb R^n, |x-y|,\mu)$. The question of removability is, of course, very much dependent on this choice. We recall the two main classical approaches here. At a few instances, we will appeal to the deep fact \cite{Bj:Bj:11} that $H^{1,p}$ coincides with the so-called Newtonian-Sobolev space $N^{1,p}$ (Remark~\ref{reM:N1p=H1p}), but we will not delve into the details of the latter space. Our proofs definitely borrow heavily from the techniques of \emph{analysis on metric spaces} in other ways as well, which will be apparent to the experts.
\begin{remark}[standing assumptions]\label{rem:standing-assum}
We will henceforth assume, throughout this note, that $w$ is a $p$-admissible weight, $p\ge 1$, on $\mathbb R^n$, $n \ge 1$. Thus, the corresponding measure $\mu$ given by $d\mu =w dx$
    \begin{enumerate}[topsep=0.3ex]
        \item is doubling,
        \item satisfies a $p$-Poincar\'e inequality, and
        \item is absolutely continuous with respect to the Lebesgue measure.
    \end{enumerate}
Many of the results in individual subsections remain true if we drop certain of these assumptions. However, we will effectively need all of these properties in the main results. Absolute continuity is needed to prove the density of Lipschitz functions in $H^{1,p}$. So, we forgo any attempt at minimizing the assumptions in isolated individual parts. The interested reader shall check which assumptions are \emph{essentially} needed in each proof and which can be dropped.
\end{remark}
Fix an open set $\Omega \subset \mathbb R^n$. For $u  \in C^\infty(\Omega)$, we define
\begin{equation}\label{normm}
    \|u \|_{1,p}:= \left( \int_\Omega |u |^p\, d\mu +\int_\Omega |\nabla u |^p\, d\mu \right)^{1/p},
\end{equation}
which may be infinite. We have suppressed the reference to $\mu$ and $\Omega$ in notation, so, all future occurrences of this norm should be understood in the weighted sense with $\mu$ and $\Omega$ implicit in the context.

The space of $L^p$-integrable functions on $\Omega$ is denoted by $L^p(\Omega;\mu)$, and
$$
\|u\|_{L^p(\Omega;\mu)}:=\Bigl(\int_\Omega |u|^p\, d\mu\Bigr)^{1/p}, \quad \|\nabla u\|_{L^p(\Omega;\mu)}:=\Bigl(\int_\Omega |\nabla u|^p\, d\mu\Bigr)^{1/p},
$$
in whatever sense $\nabla u$ is understood.

We define $H^{1,p}(\Omega;\mu)$ to be the completion in the $\|\cdot \|_{1,p}$-norm of the linear space $\{u  \in C^\infty(\Omega): \|u \|_{1,p} < \infty\}$. The advantage of this definition is that it immediately produces a Banach space. However, it is not clear in what sense the elements in $H^{1,p}$ have a derivative, and whether this derivative is unique. We will study this space in more detail in Section~\ref{sec:sob}.

Alternatively, we can define a Sobolev space via the notion of \emph{weak differentiability}. For $j\in\{1,\cdots,n\}$, recall that we say a locally Lebesgue-integrable function $v_j$ on $\Omega$ is a weak $j$'th partial derivative of a locally Lebesgue-integrable function $u$ on $\Omega$ if
\begin{equation}\label{intg-parts}
\int_\Omega u \,\partial_j\varphi \, dx = -\int_\Omega v_j \,\varphi\, dx, \quad \text{for all $\varphi \in C^\infty_0(\Omega)$}.
\end{equation}
We say a (locally Lebesgue-integrable) function $u$ is weakly differentiable if all of its weak partial derivatives are well-defined.

We define $W^{1,p}(\Omega;\mu)$ to be the space of all weakly differentiable functions $u$ on $\Omega$ such that the right-hand side of \eqref{normm} is finite, where $\nabla u$ is understood as the weak gradient $\nabla u:=(v_1,\cdots,v_n)$ of $u$. The right-hand side of \eqref{normm} is well-defined since by the absolute continuity assumption, every Lebesgue-measurable function is $\mu$-measurable as well. By abuse of notation we again equip $W^{1,p}$ with the norm $\|\cdot \|_{1,p}$.

One advantage of the space $W^{1,p}$ is the availability of Fubini's theorem. (For example, it helps to easily prove, in the unweighted setting, removability of sets with zero Hausdorff-$(n-1)$ measure.) Moreover, the functions in $W^{1,p}$ already come with derivatives, which we can easily show to be unique, up to difference on a Lebesgue null set.

One disadvantage of $W^{1,p}$ is that, in general, even with rather nice weights, $W^{1,p}(\Omega;\mu)$ may not be a Banach space (Theorem~\ref{singletonnotremov}). Notice that the definition of weak differentiability does not involve the weight; it only uses the Lebesgue measure. In this sense, it is not natural that the weight plays (apparently) no role in the definition of differentiability of functions in $W^{1,p}$; it is featured only in the integrability requirement.

Both spaces have been studied in literature. The space $H^{1,p}$ is, for example, extensively studied in the monograph~\cite{Heikima} and in the appendix to~\cite{Bj:Bj:11}, where it was proved that $H^{1,p}$ agrees with the Newtonian-Sobolev space on $(\mathbb R^n,|x-y|,\mu)$. On the other hand, the space $W^{1,p}$ is the predominant space, for example in works of Kufner, e.g., \cite{kufner:book,kufner:survey-paper}. The reader must be careful with notation across references. For example, \cite{Bj:Bj:11} uses $W^{1,p}$ as notation for what we call $H^{1,p}$.

\subsection{\protect\boldmath \texorpdfstring{$H \neq W$}{H neq W}}
The space $W^{1,p}(\Omega;\mu)$ is a subset of $L^p(\Omega;\mu)$ by definition, and the space $H^{1,p}(\Omega;\mu)$ can be identified with a subset of $L^p(\Omega;\mu)$ (Lemma~\ref{limfunc}). So, a natural question is their relation to each other. Whenever we have the equality $W^{1,p}=H^{1,p}$, we can combine the techniques and prove stronger removability results for these Sobolev functions.

In the unweighted $\mathbb R^n$, i.e.\ when $d\mu=dx$, the celebrated theorem of Meyers-Serrin tells that $W^{1,p}(\Omega;dx)=H^{1,p}(\Omega;dx)$, without any assumptions on $\Omega$. Similarly, it was proved in~\cite{Kilp:paper} that if $w$ is in the Muckenhoupt $A_p$ class, then $W^{1,p}(\Omega;wdx)=H^{1,p}(\Omega;wdx)$.

Muckenhoupt $A_p$ weights are a strict subset (when $n\ge 2$) of $p$-admissible weights. Unfortunately, for general $p$-admissible weights, the spaces $W^{1,p}$ and $H^{1,p}$ may fail to be equal. The example we give is not entirely new \cite[page 13]{Heikima}.
\begin{proposition}\label{HnotW}
    Fix $n \ge 2$ and the weight $w(x)=|x|^\gamma$, for some $\gamma > -n$. Then $w$ is a $p$-admissible weight on $\mathbb R^n$ for every $p\ge 1$. Let $B=B(0,1)$. Then, the function given by $u(x)=|x|^{-n+1}$ is not in $W^{1,p}(B;wdx)$, but it will be in $H^{1,p}(B;wdx)$ if $\gamma > n(p-1)$.
\end{proposition}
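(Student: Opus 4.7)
The $p$-admissibility of $w(x)=|x|^\gamma$ for $\gamma > -n$ is already recorded in the bullet list of Section~\ref{sec:weights}, citing \cite{Heikima} and \cite{J:Bj:01}, so I would focus on the two membership claims. Both hinge on the classical gradient $\nabla u(x) = -(n-1)|x|^{-n-1}x$, which gives $|\nabla u(x)| = (n-1)|x|^{-n}$ on $B\setminus\{0\}$.

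For the $W^{1,p}$ failure, the key computation is that in polar coordinates
\[
\int_B |\partial_j u|\, dx = (n-1)\Bigl(\int_{S^{n-1}}|\omega_j|\, dS\Bigr)\int_0^1 r^{-1}\, dr = \infty
\]
when $n\ge 2$, so $\partial_j u$ is not locally Lebesgue-integrable on $B$. I would then argue that any putative weak partial derivative $v_j$ of $u$ on $B$ must coincide Lebesgue-a.e.\ with $\partial_j u$ on $B \setminus \{0\}$: testing \eqref{intg-parts} against $\varphi \in C_0^\infty(B\setminus\{0\})$ and integrating by parts classically on $\supp \varphi$ (valid since $u$ is smooth there), the fundamental lemma of the calculus of variations forces $v_j = \partial_j u$ off a Lebesgue-null set. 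But then $v_j$ cannot be locally integrable on $B$, contradicting the very definition of weak derivative. Hence $u$ has no weak gradient on $B$, so $u \notin W^{1,p}(B; w\,dx)$ for any $p \ge 1$.

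For the $H^{1,p}$ membership under $\gamma > n(p-1)$, I would first verify the weighted norms: polar coordinates give
\[
\int_B |u|^p w\, dx = \sigma_n \int_0^1 r^{\gamma+n-1-p(n-1)}\, dr \quad\text{and}\quad \int_B |\nabla u|^p w\, dx = (n-1)^p \sigma_n \int_0^1 r^{\gamma+n-1-pn}\, dr,
\]
both finite precisely when $\gamma > n(p-1)$ (the gradient integral is the binding constraint). Next I would build smooth approximants $u_k := (1-\psi_k)u$, where $\psi_k \in C_0^\infty(B(0,1/k))$ satisfies $\psi_k \equiv 1$ on $B(0,1/(2k))$ and $|\nabla \psi_k| \lesssim k$. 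Each $u_k$ vanishes on $B(0,1/(2k))$ and equals $u$ outside $B(0,1/k)$, so $u_k \in C^\infty(B)$ with finite $\|\cdot\|_{1,p}$.

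The remaining task is to show $\|u - u_k\|_{1,p} \to 0$. The $L^p$ piece $u-u_k = \psi_k u$ is controlled by $\int_{B(0,1/k)}|u|^p w\, dx \to 0$ via dominated convergence. Splitting $\nabla u - \nabla u_k = \psi_k \nabla u + u\, \nabla \psi_k$, the first summand also vanishes in $L^p(B;w\,dx)$ by dominated convergence. The delicate, and decisive, term is $u\,\nabla \psi_k$, supported on the annulus $B(0,1/k)\setminus B(0,1/(2k))$; there $|u|\lesssim k^{n-1}$, $|\nabla \psi_k|\lesssim k$, and $w(B(0,1/k))\lesssim k^{-(n+\gamma)}$, so
\[
\int_B |u|^p |\nabla \psi_k|^p w\, dx \lesssim k^{(n-1)p}\cdot k^p \cdot k^{-(n+\gamma)} = k^{n(p-1)-\gamma},
\]
which tends to $0$ exactly when $\gamma > n(p-1)$. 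I expect this balancing of the cutoff's blow-up against the weighted measure of the transition annulus to be the main (though not deep) obstacle, and it is the single place where the sharp threshold on $\gamma$ is used.
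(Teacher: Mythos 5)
Your proposal is correct and follows essentially the same route as the paper: non-local-integrability of the would-be gradient rules out weak differentiability (hence $W^{1,p}$), and truncation of $u$ near the origin combined with the weighted norm computation gives $H^{1,p}$ membership when $\gamma>n(p-1)$. Your explicit cutoff estimate $\int_B|u|^p|\nabla\psi_k|^pw\,dx\lesssim k^{n(p-1)-\gamma}$ simply fills in the detail the paper leaves as ``a suitable smooth function ... chosen correctly,'' and correctly identifies where the sharp threshold enters.
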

\begin{proof}
    The first claim about the admissibility is in \cite{Heikima}. 
    We have that $u \notin W^{1,1}(B;wdx)$ because it is not weakly differentiable because what would have had to be its gradient is of order $|x|^{-n}$ near origin, which is not locally integrable. The integration by parts fails for certain test functions. Note that this does not involve $\gamma$ or $p$ and that, interestingly, $u$ is absolutely continuous on almost all lines.

    Now observe that
    \begin{align*}
        \int_{B} |u(x)|^pw(x)dx \lesssim \int_{B} |\nabla u(x)|^pw(x)dx &\lesssim \int_{B} |x|^{-np+\gamma} dx,
    \end{align*}
    which will be finite if $-np+\gamma > -n$, or equivalently, if $\gamma>n(p-1)$.

    For each $j=2,3,\cdots$, we replace $u|_{B(0,1/j)}$ with a suitable smooth function and denote the resulting smooth function on $B$ by $u_j$. If chosen correctly, in light of the calculation above and the monotone convergence theorem, $u_j$ converge in $H^{1,p}$ to $u$. Hence, $u \in H^{1,p}(B;wdx)$.
\end{proof}
\begin{remark}
    The weight $w(x)=|x|^\gamma$  is a Muckenhoupt $A_1$ weight if and only of $-n < \gamma \le 0$. It is a Muckenhoupt $A_p$ weight, $p>1$, if and only if $-n < \gamma < n(p-1)$. For these claims see \cite{Heikima}. This explains the (sharp) threshold $\gamma > n(p-1)$ in Proposition~\ref{HnotW}, because for $A_p$ weights we would have $H^{1,p}=W^{1,p}$ \cite{Kilp:paper}.
\end{remark}
\begin{remark}
    Let $w(x)=|x|^{\gamma}$ be as in Proposition~\ref{HnotW}. Note that $\gamma > n(p-1)$ is exactly the cutoff so that $w^{{-1}/{(p-1)}}$ is locally integrable. The latter is a recurrent condition in the theory of weights. It implies, by H\"older's inequality, that $H^{1,p}(\Omega;wdx)$ embeds continuously in $H^{1,1}(\Omega;dx)$ (unweighted), for bounded $\Omega$. But then from $H^{1,p}=W^{1,p}$ in the unweighted class, we deduce that functions in $H^{1,p}(\Omega;wdx)$ are weakly differentiable.  For Muckenhoupt $A_p$ weights, $w^{{-1}/{(p-1)}}$ is in fact locally integrable.
\end{remark}
Proposition~\ref{HnotW} (after multiplying $u(x)=|x|^{-n+1}$ by a smooth cutoff function that vanishes outside of, say, $B(0,3)$) proves the following:
\begin{theorem}\label{singletonnotremov}
    For every $p\ge 1$ and $n\ge 2$ there exists a $p$-admissible weight $w$ on $\mathbb R^n$ such that
    \begin{itemize}[topsep=0.4ex]
        \item[(1)] $W^{1,p}(\mathbb R^n;wdx)$ is not a Banach space,
        \item[(2)] there exists $u \in H^{1,p}(\mathbb R^n; wdx)$ that is not weakly differentiable,
        \item[(3)] the set $\{0\}$ is not removable for $W^{1,p}$, i.e.\ there exists a function in $W^{1,p}(\mathbb R^n \setminus \{0\};wdx)$ that does not coincide, in a.e.\ sense, with any function in $W^{1,p}(\mathbb R^n;wdx)$.
    \end{itemize}
\end{theorem}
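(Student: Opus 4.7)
The plan is to bootstrap Proposition~\ref{HnotW} to all of $\mathbb R^n$ by multiplying $u(x)=|x|^{-n+1}$ by a smooth cutoff. Fix any $\gamma > n(p-1)$ and use the $p$-admissible weight $w(x)=|x|^\gamma$. Choose $\eta \in C_0^\infty(\mathbb R^n)$ with $\eta \equiv 1$ on $B(0,2)$ and $\supp \eta \subset B(0,3)$, and set $\tilde u := \eta\,u$, extended by $0$ off $B(0,3)$. Then $\tilde u$ agrees with $u$ near the origin, is smooth on $\mathbb R^n\setminus\{0\}$, has compact support, and the same integrals as in Proposition~\ref{HnotW} show $\|\tilde u\|_{1,p}<\infty$, where $\nabla \tilde u$ is interpreted classically on $\mathbb R^n\setminus\{0\}$.

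For part~(2), I would repeat verbatim the approximation argument in Proposition~\ref{HnotW}: replace $\tilde u|_{B(0,1/j)}$ by a suitably chosen smooth function to obtain $\tilde u_j \in C^\infty(\mathbb R^n)$, and use dominated/monotone convergence to conclude $\tilde u_j \to \tilde u$ in the $\|\cdot\|_{1,p}$-norm, so that $\tilde u \in H^{1,p}(\mathbb R^n;wdx)$. The non-weak-differentiability of $\tilde u$ on $\mathbb R^n$ is then the same argument again: the classical gradient of $\tilde u$ is of order $|x|^{-n}$ near the origin, hence fails to be locally Lebesgue-integrable, so no locally integrable $v_j$ can verify \eqref{intg-parts} for test functions $\varphi$ whose support meets $0$.

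For part~(3), since $\tilde u \in C^\infty(\mathbb R^n\setminus\{0\})$ and the weighted norm is finite, $\tilde u \in W^{1,p}(\mathbb R^n\setminus\{0\};wdx)$. Suppose, toward a contradiction, that some $v \in W^{1,p}(\mathbb R^n;wdx)$ satisfies $v = \tilde u$ Lebesgue-a.e. By definition each weak partial derivative $\partial_j v$ is locally Lebesgue-integrable on $\mathbb R^n$. On any open set $\Omega' \subset\subset \mathbb R^n\setminus\{0\}$, the function $\tilde u$ is smooth and equals $v$ a.e., so by uniqueness of weak derivatives on open sets, $\partial_j v = \partial_j \tilde u$ a.e.\ on $\Omega'$. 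Exhausting $\mathbb R^n\setminus\{0\}$ by such $\Omega'$, we get $\partial_j v = \partial_j \tilde u$ Lebesgue-a.e.\ on $\mathbb R^n$. Since the right-hand side is not locally integrable on any neighborhood of $0$, this contradicts $v \in W^{1,p}(\mathbb R^n;wdx)$. Hence $\{0\}$ is not $W^{1,p}$-removable.

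Part~(1) then drops out. The approximants $\tilde u_j$ from part~(2) are smooth with finite norm, so they lie in $W^{1,p}(\mathbb R^n;wdx)$, and since they converge in $\|\cdot\|_{1,p}$ to $\tilde u$ they form a Cauchy sequence. If $W^{1,p}(\mathbb R^n;wdx)$ were complete, there would be a limit $v^* \in W^{1,p}(\mathbb R^n;wdx)$; uniqueness of $L^p$-limits forces $v^* = \tilde u$ Lebesgue-a.e., directly contradicting part~(3). The only place that requires any thought is the identification of weak and classical derivatives on any open set where $v = \tilde u$ a.e.\ and $\tilde u$ is smooth, but this is a standard consequence of the definition of weak derivative; the genuinely delicate content is already packaged in Proposition~\ref{HnotW}.
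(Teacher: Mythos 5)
Your proposal is correct and is essentially the paper's own argument: the paper proves Theorem~\ref{singletonnotremov} precisely by taking the function $u(x)=|x|^{-n+1}$ of Proposition~\ref{HnotW}, multiplying by a smooth cutoff supported in $B(0,3)$, and drawing the three conclusions exactly as you do (your only implicit step worth noting is that $L^p(\mu)$-limits determine the function Lebesgue-a.e.\ because $w=|x|^\gamma>0$ Lebesgue-a.e., which is immediate here).
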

In light of Theorem~\ref{singletonnotremov}, especially part (3), we will work with the space $H^{1,p}$ rather than $W^{1,p}$. But remember that for Muckenhoupt $A_p$ weights the two spaces coincide on every open set \cite{Kilp:paper}.
\begin{remark}
    One may, as is the case in \cite{zhikov98}, consider the abstract completion of $W^{1,p}$ in the Sobolev norm. The resulting Banach space will always contain $H^{1,p}$. However, we are faced with the same issue: the new elements in the enlarged space may fail to be locally Lebesgue-integrable and/or weakly differentiable, as the example in Proposition~\ref{HnotW} illustrates.
\end{remark}
For the sake of completeness, let us emphasize that in dimension one, the situation is very different.
\begin{theorem}[\cite{JBj:Bu:Ke}]\label{all-admss-Ap}
   A measure $\mu$ is $p$-admissible on $\mathbb R^1$, $p\ge 1$, if and only if $\mu$ is absolutely continuous with respect to the Lebesgue measure and $d\mu = wdx$ for a Muckenhoupt $A_p$ weight $w$.
\end{theorem}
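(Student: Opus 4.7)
The plan is to handle each direction separately. The ``if'' implication is immediate: Muckenhoupt $A_p$ weights on $\mathbb R^n$ are $p$-admissible in every dimension $n\ge 1$ by \cite{Fa:Je:Ke:82}. The content lies in the ``only if'' direction, which I would split into two stages: first, showing that a $p$-admissible measure $\mu$ on $\mathbb R^1$ is absolutely continuous with respect to Lebesgue measure, and second, verifying the Muckenhoupt bound on the resulting Radon--Nikodym derivative $w$.

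For absolute continuity, I would argue by contradiction. Suppose $\mu$ has a nonzero singular part; by outer regularity, fix a compact Lebesgue-null set $F$ with $\mu(F) > 0$ contained in a bounded interval $I$. After ruling out atoms by a direct variant of the same construction, pick $x_0 \in F$ whose every neighborhood intersects $F$ in a set of positive $\mu$-mass on both sides of $x_0$. For $\delta > 0$, enclose $F$ in an open $U_\delta \subset I$ with Lebesgue measure less than $\delta$, and construct a smooth nondecreasing $u_\delta \colon I \to [0,1]$ that equals $0$ on the component of $I \setminus \overline{U_\delta}$ to the far left of $x_0$, equals $1$ on that to the far right, and transitions within $U_\delta$ with derivative distributed against the absolutely continuous part of $\mu|_{U_\delta}$ in the H\"older-dual manner. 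The left-hand side of the $p$-Poincar\'e inequality \eqref{p-pi} on $I$ then has a uniform positive lower bound in terms of $\mu(F)$ and the doubling constant, while the right-hand side can be driven to zero as $\delta \to 0$, since $|U_\delta| \to 0$ forces $\int_{U_\delta} |u_\delta'|^p\,d\mu \to 0$ for an optimally chosen sequence. This contradicts \eqref{p-pi}.

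Once $d\mu = w\,dx$ is established, the second stage is the classical extraction of $A_p$ from a $p$-Poincar\'e inequality. For $p > 1$ and a fixed interval $I = (a,b)$, I would test \eqref{p-pi} against the nondecreasing function $u(x) := \int_a^x w(t)^{-1/(p-1)}\,dt$. A direct computation yields $\int_I |u'|^p\,d\mu = \int_I w^{-1/(p-1)}\,dx$, while a doubling-based argument produces a lower bound on $\int_I |u-u_I|\,d\mu$ of the form $c\,u(b)\,\mu(I) = c\,\mu(I)\int_I w^{-1/(p-1)}\,dx$, with $c$ depending only on the doubling constant. Rearranging the resulting inequality gives exactly the $A_p$ bound for $w$ on $I$. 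The case $p=1$ is handled analogously with a pointwise Hardy--Littlewood bound in place of the dual-integrability condition.

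The main obstacle is the absolute continuity step, specifically the optimal routing of $u_\delta'$ across the thin neighborhood $U_\delta$ so that the $L^p(d\mu)$-norm of the derivative vanishes while the oscillation of $u_\delta$ across $F$ stays bounded below. This maneuver is intrinsically one-dimensional: $F$ separates $I$ cleanly into two sides across which a monotone profile can jump, whereas in dimension $n \ge 2$ no analogous global separation of a general Lebesgue-null set is available. That dimensional rigidity is precisely what makes the corresponding statement for $n \ge 2$ the open problem recorded earlier in the section.
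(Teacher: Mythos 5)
The paper does not prove this statement at all; it is quoted from \cite{JBj:Bu:Ke}, so the only question is whether your sketch would stand on its own, and there are two genuine gaps. The fatal one is in your absolute-continuity step: the claim that ``$|U_\delta|\to 0$ forces $\int_{U_\delta}|u_\delta'|^p\,d\mu\to 0$ for an optimally chosen sequence'' is backwards. Write $d\mu = w\,dx + d\mu_s$. Any profile that climbs by $1$ across $U_\delta$ with derivative supported there satisfies, by H\"older duality,
$\int_{U_\delta}|u_\delta'|^p\,d\mu \ \ge\ \int_{U_\delta}|u_\delta'|^p w\,dx \ \ge\ \bigl(\int_{U_\delta} w^{-1/(p-1)}\,dx\bigr)^{1-p}$,
and when $w$ is, say, bounded away from zero near $F$ this lower bound does not tend to $0$ as $|U_\delta|\to 0$ --- it blows up. Thinning the corridor through which the unit rise must be routed makes the right-hand side of \eqref{p-pi} more expensive, not cheaper. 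Your construction therefore only yields a contradiction when the absolutely continuous part is degenerate near $F$ (e.g.\ $\mu$ purely singular); it cannot exclude candidates of the form $w\,dx+\mu_s$ with a nondegenerate $w$, which is exactly the hard case. In the actual argument one cannot decouple ``absolute continuity'' from ``$A_p$'': testing \eqref{p-pi} on an interval $I$ with a monotone function whose rise is the dual-optimal profile on the \emph{middle third} of $I$ (so that, by doubling, both outer thirds carry mass $\approx\mu(I)$, which gives the left-hand lower bound) produces $\mu(I)\bigl(\int_I w^{-1/(p-1)}dx\bigr)^{p-1}\lesssim |I|^p$ after a covering step; this single inequality gives $w\in A_p$, and combined with H\"older's bound $|I|^p\le(\int_I w\,dx)(\int_I w^{-1/(p-1)}dx)^{p-1}$ it gives $\mu(I)\le C\int_I w\,dx$ for all intervals, which kills $\mu_s$ by differentiation of measures. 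Note that the optimal test derivative can always be chosen to avoid the Lebesgue-null carrier of $\mu_s$, so only $w$ enters the right-hand side.

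Your second stage has a related gap. For $u(x)=\int_a^x w^{-1/(p-1)}dt$ on $I=(a,b)$, the asserted lower bound $\int_I|u-u_I|\,d\mu\ge c\,u(b)\,\mu(I)$ with $c$ depending only on the doubling constant is false: if the mass of $w^{-1/(p-1)}$ on $I$ is concentrated in a small terminal subinterval $J$ near $b$, then $u\approx 0$ off $J$ and one checks $\int_I|u-u_I|\,d\mu\lesssim u(b)\,\mu(J)$, and $\mu(J)/\mu(I)$ can be made arbitrarily small compatibly with doubling. The repair is the same localization as above: put the rise in the middle of $I$ (or at a $\mu$-median), where doubling genuinely gives comparable mass on both sides, and then upgrade from the middle third to $I$ by covering; one should also truncate ($w$ replaced by $\max(w,\eps)$) since $w^{-1/(p-1)}$ is not a priori locally integrable, and treat $p=1$ with the essential infimum in place of the dual integral. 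With these corrections your outline converges to the argument of \cite{JBj:Bu:Ke}; as written, both the absolute-continuity contradiction and the $A_p$ extraction rest on estimates that fail.
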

\begin{corollary}
    Let $\mu$ be a $p$-admissible measure, $p \ge 1$, on $\mathbb R^1$. Then for every open set $\Omega \subset \mathbb R^1$, we have $W^{1,p}(\Omega;\mu)=H^{1,p}(\Omega;\mu)$.
\end{corollary}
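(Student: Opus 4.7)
The plan is to obtain this as an essentially immediate combination of Theorem~\ref{all-admss-Ap} with the result of Kilpel\"ainen cited earlier in the paper.

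First I would invoke Theorem~\ref{all-admss-Ap}: since $\mu$ is $p$-admissible on $\mathbb R^1$, it is absolutely continuous with respect to the Lebesgue measure, and its Radon--Nikodym derivative $w$ is a Muckenhoupt $A_p$ weight. In particular, the measure $\mu$ is of the form $d\mu = wdx$ with $w \in A_p$.

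Next I would apply the theorem of Kilpel\"ainen~\cite{Kilp:paper} mentioned in Section~\ref{sec:prelim}, which states that for $w \in A_p$ and any open set $\Omega \subset \mathbb R^n$, the Sobolev spaces $H^{1,p}(\Omega;wdx)$ and $W^{1,p}(\Omega;wdx)$ coincide. Applied in dimension $n=1$ to the weight $w$ obtained in the previous step, this yields $H^{1,p}(\Omega;\mu) = W^{1,p}(\Omega;\mu)$ for every open $\Omega \subset \mathbb R^1$, which is exactly the claim.

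There is essentially no obstacle: both ingredients have been stated and cited in the excerpt, so the argument amounts to concatenating them. The only thing to verify, which is implicit in the definitions, is that the notion of ``weak derivative'' used in the definition of $W^{1,p}(\Omega;\mu)$ depends only on the Lebesgue-a.e.\ class of functions (and not on $\mu$ itself), so that the known equality $H^{1,p}(\Omega;wdx)=W^{1,p}(\Omega;wdx)$ for $A_p$ weights transfers directly to the measure-theoretic notation $\mu$ used in this corollary; this is a matter of unwinding definitions rather than a genuine step.
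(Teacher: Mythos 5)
Your proposal is correct and is exactly the argument the paper intends: the corollary is stated without proof precisely because it follows immediately from Theorem~\ref{all-admss-Ap} combined with the cited result of Kilpel\"ainen that $H^{1,p}=W^{1,p}$ for Muckenhoupt $A_p$ weights on every open set. Your closing remark about the weak derivative depending only on the Lebesgue-a.e.\ class is a fine (and harmless) observation, but no further argument is needed.
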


\subsection{The Sobolev space \protect\boldmath \texorpdfstring{$H^{1,p}$}{H1p}}\label{sec:sob}
Recall that we defined $H^{1,p}(\Omega;\mu)$ as the completion in the $\|\cdot \|_{1,p}$-norm of the linear space $\{u  \in C^\infty(\Omega): \|u \|_{1,p} < \infty\}$.

By its definition, $H^{1,p}(\Omega;\mu)$ is a Banach space. Objects in $H^{1,p}$ are, per definition, equivalence classes of Cauchy sequences of smooth functions. We wish to show that we can realize $H^{1,p}(\Omega;\mu)$ as a subset of $L^p$-integrable functions on $\Omega$.

Fix an element  $[u] \in H^{1,p}(\Omega;\mu)$. Take a Cauchy sequence $u_i \in C^\infty(\Omega)$, i.e.\ Cauchy in the $\|\cdot\|_{1,p}$-norm, that represents $[u]$. Then $u_i$ form a Cauchy sequence in $L^{p}(\Omega;\mu)$. Thus, by completeness of $L^{p}(\Omega;\mu)$, there exists a $u \in L^{p}(\Omega;\mu)$ to which $u_i$ converge in $L^p$.

Moreover, $\nabla u_i$ form a Cauchy sequence in $L^{p}(\Omega;\mu)^n$. Thus, there exists an vector-valued function in $L^{p}(\Omega;\mu)^n$, which we denote by $\nabla u$, to which $\nabla u_i$ converge in $L^{p}(\Omega;\mu)^n$.

The functions $u$ and $\nabla u$ depend only on $[u]$ and not on the representative sequence $u_i$. Thus we have proved that there is an embedding
\begin{equation}\label{embed-H1p}
    H^{1,p}(\Omega;\mu) \to L^{p}(\Omega;\mu) \times L^{p}(\Omega;\mu)^n,
\end{equation}
which we can turn into a continuous (or even isometric) one by equipping the target with a natural norm. 

The definition of $H^{1,p}$ works with any weight or even any Borel measure. However, in general, the gradient $\nabla u$ may not be uniquely determined by $u$, the first component of the embedding in~\eqref{embed-H1p} \cite{Fa:Je:Ke:82}. Under our assumptions of doubling and Poincar\'e inequality, the gradient will always be unique. The first proof of this fact is due to Semmes and was published in \cite{Hei:Ko:94}. (See also \cite[Chapter 20]{Heikima} where a shorter proof attributed to \cite{Fr-Haj-Kos} is provided.)

Using the uniqueness of gradients, we can finally identify $[u]$ with $u$, the first component of the embedding in~\eqref{embed-H1p}. So, finally, in this precise sense, we can, and will, view elements in $H^{1,p}(\Omega;\mu)$ as $L^p$-integrable functions.

Incidentally, we have proved the following useful characterization:
\begin{lemma}
\label{limfunc}
The space $H^{1,p}(\Omega;\mu)$ consists of all $u \in L^p(\Omega;\mu)$ such that there exists a $v\in L^{p}(\Omega;\mu)^n$ and a sequence $u_i \in C^\infty(\Omega)$ with the property that $\|u_i-u\|_{L^p(\Omega;\mu)} \to 0$ and $ \|\nabla u_i - v \|_{L^p(\Omega;\mu)} \to 0$.
\end{lemma}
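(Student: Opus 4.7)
The plan is to prove the two inclusions separately, using the preceding discussion of the embedding \eqref{embed-H1p} and the uniqueness of the weak gradient under the standing assumptions.

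For the forward direction, I would start with $u \in H^{1,p}(\Omega;\mu)$. By definition, $u$ is an equivalence class represented by some sequence $u_i \in C^\infty(\Omega)$ that is Cauchy in the $\|\cdot\|_{1,p}$-norm. Exactly as in the discussion preceding the lemma, $\{u_i\}$ is Cauchy in $L^p(\Omega;\mu)$ and $\{\nabla u_i\}$ is Cauchy in $L^p(\Omega;\mu)^n$, so by completeness there are $u \in L^p(\Omega;\mu)$ and $v \in L^p(\Omega;\mu)^n$ with $\|u_i - u\|_{L^p(\Omega;\mu)} \to 0$ and $\|\nabla u_i - v\|_{L^p(\Omega;\mu)} \to 0$. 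After the identification discussed via \eqref{embed-H1p}, this is precisely the sequence required by the lemma, with $v=\nabla u$.

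For the converse, suppose $u \in L^p(\Omega;\mu)$, $v \in L^p(\Omega;\mu)^n$, and $u_i \in C^\infty(\Omega)$ satisfy $\|u_i - u\|_{L^p(\Omega;\mu)} \to 0$ and $\|\nabla u_i - v\|_{L^p(\Omega;\mu)} \to 0$. Then the triangle inequality in $L^p$, applied componentwise, shows that $\{u_i\}$ is Cauchy in the $\|\cdot\|_{1,p}$-norm: for $i,j$ large,
\[
\|u_i - u_j\|_{1,p} \le \|u_i - u\|_{L^p(\Omega;\mu)} + \|u_j - u\|_{L^p(\Omega;\mu)} + \|\nabla u_i - v\|_{L^p(\Omega;\mu)} + \|\nabla u_j - v\|_{L^p(\Omega;\mu)}.
\]
Hence $\{u_i\}$ represents an element $[\tilde u] \in H^{1,p}(\Omega;\mu)$. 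Applying the forward direction to $[\tilde u]$, there exist $\tilde u \in L^p(\Omega;\mu)$ and $\tilde v \in L^p(\Omega;\mu)^n$ with $u_i \to \tilde u$ in $L^p$ and $\nabla u_i \to \tilde v$ in $L^p$. But $L^p$-limits are unique, so $\tilde u = u$ and $\tilde v = v$ as $L^p$ elements. Thus, under the identification of $H^{1,p}(\Omega;\mu)$ with its image in $L^p(\Omega;\mu)$, we have $u \in H^{1,p}(\Omega;\mu)$ with weak gradient $v$.

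The only subtle point is the legitimacy of identifying the class $[\tilde u]$ with its $L^p$-limit $u$ and attaching the unambiguous gradient $v$ to it; this is exactly what the uniqueness-of-gradient result invoked in Section~\ref{sec:sob} provides under the $p$-admissibility of $\mu$. With that in hand, there is no real obstacle — the lemma is essentially a restatement of the construction of the embedding \eqref{embed-H1p}.
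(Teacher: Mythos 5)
Your proposal is correct and follows essentially the same route as the paper, which presents this lemma as an immediate byproduct of the construction of the embedding \eqref{embed-H1p} together with the uniqueness of the gradient under $p$-admissibility. Your converse direction (checking that the given sequence is Cauchy in the $\|\cdot\|_{1,p}$-norm, modulo discarding the finitely many terms of possibly infinite norm, and then identifying the resulting class with $u$ and $v$ via uniqueness of $L^p$-limits) is exactly the content the paper leaves implicit.
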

Let us record some properties of functions in $H^{1,p}$. The first claim is an immediate consequence of Lemma~\ref{limfunc}.
\begin{lemma}\label{cor:a.e.=H1p}
    If $ v \in H^{1,p}(\Omega;\mu)$ and $u(x)=v(x)$ at $\mu$-a.e.\ $x\in \Omega$, then $ u \in H^{1,p}(\Omega;\mu)$.
\end{lemma}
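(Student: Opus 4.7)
The plan is to invoke Lemma~\ref{limfunc} directly, observing that its hypothesis is formulated entirely in terms of $L^p$-convergence, which is insensitive to changes on a $\mu$-null set.

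First, since $v \in H^{1,p}(\Omega;\mu)$, Lemma~\ref{limfunc} furnishes a sequence $u_i \in C^\infty(\Omega)$ and a vector field $w \in L^p(\Omega;\mu)^n$ with $\|u_i - v\|_{L^p(\Omega;\mu)} \to 0$ and $\|\nabla u_i - w\|_{L^p(\Omega;\mu)} \to 0$. The assumption $u = v$ $\mu$-a.e.\ means in particular that $u$ is $\mu$-measurable and that $|u-v|=0$ $\mu$-a.e., so $u \in L^p(\Omega;\mu)$ (with the same norm as $v$) and $|u_i - u| = |u_i - v|$ $\mu$-a.e.\ for each $i$. Consequently,
\[
\|u_i - u\|_{L^p(\Omega;\mu)} = \|u_i - v\|_{L^p(\Omega;\mu)} \to 0.
\]
The second convergence $\|\nabla u_i - w\|_{L^p(\Omega;\mu)} \to 0$ does not involve $u$ or $v$ at all, so it is unchanged. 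Thus the pair $(u,w)$ satisfies exactly the characterization in Lemma~\ref{limfunc}, and we conclude $u \in H^{1,p}(\Omega;\mu)$ (with the same gradient $w$, by the uniqueness of gradients established via \cite{Hei:Ko:94}).

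There is really no obstacle here; the only subtlety worth mentioning is that $u$ must be $\mu$-measurable in order to speak of $\|u_i - u\|_{L^p(\Omega;\mu)}$, but this is automatic from $u = v$ $\mu$-a.e.\ together with completeness of $\mu$ (or, equivalently, by redefining $u$ on a $\mu$-null set to agree with $v$ everywhere, which does not affect the conclusion). So the proof is a one-line application of Lemma~\ref{limfunc}.
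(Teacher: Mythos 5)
Your proof is correct and is exactly the paper's argument: the paper dismisses this lemma as "an immediate consequence of Lemma~\ref{limfunc}", and your one-line application of that characterization (using that $L^p$-norms do not see $\mu$-null sets) is precisely that immediate consequence, with the measurability point a reasonable extra remark.
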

For a $\mu$-measurable function $u$ on $\Omega$, we write $u \in H^{1,p}_{loc}(\Omega;\mu)$ if $ u \in H^{1,p}(\Omega';\mu) $ for every open set $\Omega' \subset \subset \Omega$.
\begin{proposition}\label{prop:H1p-loc+Lp=H1p}
    If $u \in H^{1,p}_{loc}(\Omega;\mu)$ and $u$ and $|\nabla u|$ are in $L^p(\Omega;\mu)$, then $u \in H^{1,p}(\Omega;\mu)$.
\end{proposition}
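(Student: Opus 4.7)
The plan is to adapt the classical Meyers--Serrin partition-of-unity argument to the weighted setting. The hypothesis $u \in H^{1,p}_{loc}(\Omega;\mu)$ provides local smooth approximations on compactly contained subdomains, while the global integrability of $u$ and $|\nabla u|$ allows us to patch them together with a telescoping error control.

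First, I would exhaust $\Omega$ by open sets $\Omega_1 \subset\subset \Omega_2 \subset\subset \cdots$ with $\bigcup_k \Omega_k = \Omega$, set $V_k := \Omega_{k+1}\setminus\overline{\Omega_{k-1}}$ (with $\Omega_{-1}=\Omega_0=\emptyset$), and fix a smooth partition of unity $\{\psi_k\}$ subordinate to this locally finite cover, with $\psi_k \in C^\infty_c(V_k)$ and $\sum_k \psi_k \equiv 1$ on $\Omega$. Next, for each $k$, I would verify that $u\psi_k \in H^{1,p}(\Omega;\mu)$ with gradient $\psi_k\nabla u + u\nabla\psi_k$, where $\nabla u$ denotes the gradient of $u$ as an $H^{1,p}_{loc}$-function (well-defined globally by the uniqueness of gradients established under doubling and $p$-Poincar\'e). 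Indeed, since $V_k \subset\subset \Omega$, there exist $u_i \in C^\infty(V_k)$ converging to $u$ in $\|\cdot\|_{1,p}$ on $V_k$; the functions $u_i\psi_k$ belong to $C^\infty_c(V_k) \subset C^\infty(\Omega)$, and the classical product rule applied to the smooth factor shows $u_i\psi_k$ is Cauchy in the global $\|\cdot\|_{1,p}$-norm and converges to $u\psi_k$ with the claimed gradient.

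Now fix $\varepsilon > 0$. For each $k$, using the approximation above, choose $\varphi_k \in C^\infty(\Omega)$ with $\supp\varphi_k \subset V_k$ and $\|u\psi_k - \varphi_k\|_{1,p} < \varepsilon/2^k$. Set $v := \sum_k \varphi_k$; by local finiteness of the cover this is a smooth function on $\Omega$. Since $\sum_k \psi_k \equiv 1$ forces $\sum_k \nabla\psi_k \equiv 0$, we have $u = \sum_k u\psi_k$ and $\nabla u = \sum_k \nabla(u\psi_k)$ pointwise $\mu$-a.e. Consequently $u - v = \sum_k (u\psi_k - \varphi_k)$ and $\nabla u - \nabla v = \sum_k \nabla(u\psi_k - \varphi_k)$ are absolutely summable in $L^p(\Omega;\mu)$, and the triangle inequality yields $\|u-v\|_{1,p} \le \sum_k \varepsilon/2^k = \varepsilon$. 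Letting $\varepsilon \to 0$ along a sequence of such $v$'s and invoking Lemma~\ref{limfunc} concludes that $u \in H^{1,p}(\Omega;\mu)$.

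The main obstacle is the second step: I have to derive the product rule and global $H^{1,p}(\Omega;\mu)$-membership of $u\psi_k$ directly from the completion definition, without appeal to weak derivatives, since Theorem~\ref{singletonnotremov} warns that $H^{1,p}$-functions need not even be weakly differentiable under general $p$-admissible weights. The uniqueness of gradients---a consequence of the $p$-admissibility assumption---is precisely what makes $\nabla u$ a well-defined global object and lets the local $H^{1,p}$-pieces match consistently on overlaps, so that the telescoping estimate closes up.
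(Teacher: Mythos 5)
Your argument is correct and is exactly the approach the paper indicates: a Meyers--Serrin-type partition of unity subordinate to a locally finite cover built from an exhaustion by compactly contained open sets (the paper only sketches this, deferring to \cite[Lemma~1.15]{Heikima}). Your handling of the two delicate points---deriving the product rule for $u\psi_k$ directly from the completion definition via Lemma~\ref{limfunc}, and using uniqueness of gradients under $p$-admissibility to make $\nabla u$ a globally consistent object---fills in the details correctly.
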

Proposition~\ref{prop:H1p-loc+Lp=H1p} will be used in proving the locality of removability. Its proof uses a partition of unity subordinate to a covering of $\Omega$ by an increasing sequence of compactly contained open subsets \cite[Lemma~1.15]{Heikima}.
\begin{proposition}[weak compactness]\label{lem:weak-conv}
    For every $p>1$, the space $H^{1,p}(\Omega;\mu)$ is \emph{sequentially weakly compact}, i.e., given any bounded sequence $\{u_j\}_j$ in $H^{1,p}(\Omega;\mu)$, there exist $u \in H^{1,p}(\Omega;\mu)$ and a subsequence $\{u_{j_k}\}_k$ such that $u_{j_k}$ converge weakly in $L^p(\Omega;\mu)$ to $u$ and $\nabla u_{j_k}$ converge weakly in $L^p(\Omega;\mu)^n$ to $\nabla u$, interpreted in terms of the coordinate functions.
\end{proposition}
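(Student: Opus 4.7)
The plan is to identify $H^{1,p}(\Omega;\mu)$, via the embedding $u \mapsto (u, \nabla u)$ from~\eqref{embed-H1p}, with a closed linear subspace of the product Banach space $L^p(\Omega;\mu) \times L^p(\Omega;\mu)^n$, and then exploit reflexivity of $L^p$ for $p>1$. The fact that the image is norm-closed is exactly the content of completeness of $H^{1,p}(\Omega;\mu)$ in the $\|\cdot\|_{1,p}$-norm, combined with the uniqueness of the gradient that was already used to justify the embedding. Equipped on the product with, say, the norm $(\|u\|_{L^p}^p + \|v\|_{L^p}^p)^{1/p}$, this embedding is isometric.

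From boundedness of $\{u_j\}_j$ in $H^{1,p}(\Omega;\mu)$, the sequences $\{u_j\}_j$ and $\{\nabla u_j\}_j$ are bounded in $L^p(\Omega;\mu)$ and $L^p(\Omega;\mu)^n$, respectively. By reflexivity of $L^p$ for $p>1$ and a diagonal extraction, I pass to a subsequence $\{u_{j_k}\}_k$ along which $u_{j_k} \rightharpoonup u$ weakly in $L^p(\Omega;\mu)$ and $\nabla u_{j_k} \rightharpoonup v$ weakly in $L^p(\Omega;\mu)^n$ for some $u \in L^p(\Omega;\mu)$ and $v \in L^p(\Omega;\mu)^n$, so the pairs $(u_{j_k}, \nabla u_{j_k})$ converge weakly to $(u,v)$ in the product. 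The remaining task is to show that $(u,v)$ lies in the image of the embedding, which by uniqueness of the gradient forces $u \in H^{1,p}(\Omega;\mu)$ and $v = \nabla u$.

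For this last step I would invoke Mazur's lemma applied to the pairs: it furnishes convex combinations
\[
(\tilde u_k, \tilde v_k) := \sum_{i=k}^{N_k} \lambda_i^{(k)} \bigl(u_{j_i}, \nabla u_{j_i}\bigr),
\]
with $\lambda_i^{(k)} \ge 0$ summing to $1$, that converge to $(u,v)$ strongly in $L^p(\Omega;\mu) \times L^p(\Omega;\mu)^n$. Because $H^{1,p}(\Omega;\mu)$ is a linear subspace of the product, each pair $(\tilde u_k, \tilde v_k)$ still lies in its image and satisfies $\tilde v_k = \nabla \tilde u_k$; strong convergence in the product norm is exactly convergence in $\|\cdot\|_{1,p}$, so completeness of $H^{1,p}(\Omega;\mu)$ places $(u,v)$ in the image as well.

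The main obstacle is this final step: weak $L^p$-limits of gradients of smooth functions need not, on their face, be gradients of anything, so one needs a mechanism to import the weak information back into $H^{1,p}$. Mazur's lemma supplies that mechanism, using only that $H^{1,p}(\Omega;\mu)$ is a closed convex subset of a reflexive Banach space. This is also precisely where the hypothesis $p>1$ enters essentially: for $p=1$ the space $L^1$ is not reflexive, bounded sequences may fail to have weakly convergent subsequences, and the very first extraction step breaks down.
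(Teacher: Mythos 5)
Your proposal is correct, and it is essentially the argument the paper has in mind: the paper does not write out a proof but simply appeals to ``standard functional analysis techniques and properties of $L^p$-spaces'' (citing the analogous result in the weighted-Sobolev literature), and that standard argument is precisely yours --- view $H^{1,p}(\Omega;\mu)$ as a closed (hence weakly closed, by Mazur) linear subspace of the reflexive space $L^p(\Omega;\mu)\times L^p(\Omega;\mu)^n$ via the embedding \eqref{embed-H1p} and the uniqueness of gradients, and extract weakly convergent subsequences componentwise. The identification of where $p>1$ enters (reflexivity of $L^p$) is also the right one.
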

This follows from standard functional analysis techniques and properties of $L^p$-spaces. Not much is special about $H^{1,p}$ here \cite[Theorem~1.31]{Heikima}.

\begin{lemma}\label{cor:p-PI-for-H}
The $p$-Poincar\'e inequality holds for functions in $H^{1,p}$: let $C>0$ be the constant in the definition of the $p$-Poincar\'e inequality i.e., in~\eqref{def:p-PI}, then for every open $\Omega \subset \mathbb R^n$, every $u \in H^{1,p}(\Omega;\mu)$, and every $Q \subset \subset \Omega$, we have
\begin{equation*}
        \intavg_Q|u-u_{\cube}|\, d\mu \le C (\diam Q)\Big(\intavg_{Q} |\nabla u|^p\, d\mu\Big)^{1/p}.
\end{equation*}
\end{lemma}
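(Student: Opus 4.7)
The plan is to deduce the inequality for $H^{1,p}$ functions from its validity on smooth functions by approximation. Fix $u \in H^{1,p}(\Omega;\mu)$ and a cube $Q \subset\subset \Omega$. By Lemma~\ref{limfunc}, pick a sequence $u_i \in C^\infty(\Omega)$ with $u_i \to u$ in $L^p(\Omega;\mu)$ and $\nabla u_i \to \nabla u$ in $L^p(\Omega;\mu)^n$. Since $\overline{Q} \subset \Omega$ is compact and each $u_i$ is continuous on $\Omega$, the restriction $u_i|_Q$ is a bounded element of $C^\infty(Q)$. Definition~\ref{def:p-PI} therefore applies to each $u_i$ on $Q$, giving
\begin{equation*}
        \intavg_Q|u_i-(u_i)_{\cube}|\, d\mu \le C (\diam Q)\Big(\intavg_{Q} |\nabla u_i|^p\, d\mu\Big)^{1/p}.
\end{equation*}

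The remaining task is to pass to the limit $i\to\infty$ on both sides. Doubling implies $0<\mu(Q)<\infty$, so $L^p(Q;\mu)$ convergence yields $L^1(Q;\mu)$ convergence by H\"older's inequality. In particular, $u_i \to u$ in $L^1(Q;\mu)$, which forces $(u_i)_Q \to u_Q$ and hence, by the triangle inequality,
\begin{equation*}
\bigl|\,|u_i-(u_i)_Q|-|u-u_Q|\,\bigr| \le |u_i - u| + |(u_i)_Q - u_Q| \longrightarrow 0 \quad \text{in } L^1(Q;\mu).
\end{equation*}
Thus the left-hand side converges to $\intavg_Q |u-u_Q|\, d\mu$. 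On the right-hand side, $\nabla u_i \to \nabla u$ in $L^p(Q;\mu)^n$ gives $\|\nabla u_i\|_{L^p(Q;\mu)} \to \|\nabla u\|_{L^p(Q;\mu)}$, so the whole right-hand side converges to $C(\diam Q)\bigl(\intavg_Q |\nabla u|^p\, d\mu\bigr)^{1/p}$.

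There is essentially no hard step here: the statement is a direct transfer of the smooth $p$-Poincar\'e inequality to the completion, and the only mild point to verify is that $\mu(Q)$ is finite and positive, which follows from the doubling hypothesis and the fact that $Q$ is contained in some large ball of finite positive $\mu$-measure. Combining the two limits yields the stated inequality with the same constant $C$.
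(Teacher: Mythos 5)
Your proof is correct and follows essentially the same route as the paper: approximate $u$ by smooth functions Cauchy in the Sobolev norm (noting they are bounded on $Q$ since $\overline{Q}\subset\Omega$ is compact), apply the smooth $p$-Poincar\'e inequality on $Q$, and pass to the limit using $L^1(Q;\mu)$ convergence of $u_i$ and of the averages together with $L^p$ convergence of the gradients. No gaps.
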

\begin{proof}
    By definition, there exists a sequence $u_i \in C^\infty(\Omega)$ that converges to $u$ in $ H^{1,p}(\Omega;\mu)$. In particular, $u_i$ converge in $L^p$ to $u$ and $\nabla u_i$ converge in $(L^p)^n$ to $\nabla u$. By H\"older's inequality, $u_i$ converge in $L^1(Q;\mu)$ to $u$.
    
    Since, $u_i$ are bounded and smooth on $Q$, by the assumption of $p$-Poincar\'e inequality, for each $i$
\begin{equation}\label{approx-seq-pi}
        \intavg_Q|u_i-(u_i)_{\cube}|\, d\mu \le C (\diam Q)\Big(\intavg_{Q} |\nabla u_i|^p\, d\mu\Big)^{1/p}.
\end{equation}
From the observations above, it is easy to see that
$(u_i)_{\cube} \to u_{\cube}$ and $u_i-(u_i)_{\cube}$ converge in $L^1(Q;\mu)$ to $u-u_{\cube}$. Thus, taking limit as $i \to \infty$, in~\eqref{approx-seq-pi}, completes the proof. 
\end{proof}
We will need a consequence of Poincar\'e inequality about averages.
\begin{lemma}\label{prop:avg-difference}
Let $\Omega$ be open, and $Q_0 \subset\subset \Omega$ and $Q_1$ be cubes such that $ Q_1 \subset Q_0 \subset \kappa Q_1$ for some $\kappa \ge 1$. Then, there exists a constant $C>0$ that depends only on $\kappa$ and the doubling and the Poincar\'e inequality constants of $\mu$ such that for every $u \in H^{1,p}(\Omega;\mu)$
   \begin{equation*}
       |u_{\cube_1}-u_{\cube_0}|\leq C \diam{(Q_0)}\left(\intavg_{Q_0}|\nabla u(x)|^pdx\right)^{\frac{1}{p}}.
   \end{equation*}
\end{lemma}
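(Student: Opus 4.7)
The plan is to compare both averages $u_{Q_1}$ and $u_{Q_0}$ to each other by routing through $u_{Q_0}$ as a common reference constant on $Q_1$, and then to invoke the $p$-Poincar\'e inequality for $H^{1,p}$-functions (Lemma~\ref{cor:p-PI-for-H}) on the bigger cube $Q_0$. The doubling property of $\mu$ will absorb the volume mismatch between $Q_1$ and $Q_0$ into a constant depending only on $\kappa$.

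\textbf{Key steps.} First, I would write
\[
|u_{Q_1}-u_{Q_0}| = \Bigl|\intavg_{Q_1}\bigl(u-u_{Q_0}\bigr)\, d\mu\Bigr|
\leq \intavg_{Q_1} |u-u_{Q_0}|\, d\mu
= \frac{1}{\mu(Q_1)}\int_{Q_1}|u-u_{Q_0}|\,d\mu.
\]
Since $Q_1\subset Q_0$, I can enlarge the domain of integration to $Q_0$, obtaining
\[
|u_{Q_1}-u_{Q_0}| \leq \frac{\mu(Q_0)}{\mu(Q_1)}\intavg_{Q_0}|u-u_{Q_0}|\,d\mu.
\]
Next, from $Q_0\subset \kappa Q_1$ and the doubling property of $\mu$ (iterated a number of times depending only on $\kappa$ and the doubling constant), I get $\mu(Q_0)\leq \mu(\kappa Q_1)\leq C(\kappa)\,\mu(Q_1)$, so the ratio $\mu(Q_0)/\mu(Q_1)$ is controlled by a constant depending only on $\kappa$ and the doubling constant of $\mu$. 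Finally, since $Q_0\subset\subset\Omega$ and $u\in H^{1,p}(\Omega;\mu)$, Lemma~\ref{cor:p-PI-for-H} applies on $Q_0$ and gives
\[
\intavg_{Q_0}|u-u_{Q_0}|\, d\mu \leq C\,\diam(Q_0)\Bigl(\intavg_{Q_0}|\nabla u|^p\, d\mu\Bigr)^{1/p}.
\]
Combining the two displays yields the stated estimate with a constant depending only on $\kappa$, the doubling constant, and the $p$-Poincar\'e inequality constant.

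\textbf{Obstacles.} There is no real obstacle here; the argument is a standard two-line telescoping combined with doubling and Poincar\'e. The only mild subtlety is the use of Lemma~\ref{cor:p-PI-for-H} for a general $H^{1,p}$-function rather than a smooth one, but that is exactly what that lemma is set up for. I also note that the integral in the right-hand side of the statement is presumably meant with respect to $d\mu$ rather than $dx$, which is what the above proof delivers; otherwise the conclusion would not be dimensionally consistent with the Poincar\'e inequality being used.
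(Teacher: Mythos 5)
Your proposal is correct and follows essentially the same route as the paper: telescope through $u_{\cube_0}$ on $Q_1$, enlarge the integration domain to $Q_0$, control $\mu(Q_0)/\mu(Q_1)$ by doubling via $Q_0\subset\kappa Q_1$, and apply the Poincar\'e inequality for $H^{1,p}$ functions (Lemma~\ref{cor:p-PI-for-H}) on $Q_0$. Your closing remark is also on point: the $dx$ in the statement is a typo for $d\mu$, which is what the paper's own proof produces.
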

\begin{proof}
We have
     \begin{align*}
        |u_{\cube_1}-u_{\cube_0}| & \leq \intavg_{Q_1}|u(z)-u_{\cube_0}|\, d\mu\\
        &\le \frac{\mu(Q_0)}{\mu(Q_1)}\, \intavg_{Q_0}|u(z)-u_{\cube_0}|\, d\mu\\ 
        &\leq C \diam{(Q_0)}\left(\intavg_{Q_0}|\nabla u|^p \, d\mu\right)^{\frac{1}{p}},
    \end{align*}
    where in the last step we applied the doubling condition and the $p$-Poincar\'e inequality (Lemma~\ref{cor:p-PI-for-H}). In particular, $C$ depends only on $\kappa$ and the doubling and the Poincar\'e inequality constants of $\mu$.
\end{proof}
Lemma~\ref{prop:avg-difference} can be used to prove the following observation:
\begin{lemma}\label{n=1-nonremov}
If $d\mu=wdx$ supports a $p$-Poincar\'e inequality on $\mathbb R^1$, then no nonempty set is removable for $H^{1,p}(\mathbb R^1;\mu)$.    
\end{lemma}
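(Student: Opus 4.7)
My plan is to exhibit, for any nonempty compact $E \subset \mathbb{R}$, a jump function that is smooth on $\mathbb{R} \setminus E$ with finite Sobolev norm, but cannot belong to $H^{1,p}(\mathbb{R};\mu)$. I would fix $x_0 \in E$ and a bump $\eta \in C^\infty_c(\mathbb{R})$ with $\eta \equiv 1$ on a neighborhood $I$ of $x_0$, and set $u := \eta \cdot \chi_{(x_0,\infty)}$. Because $x_0 \in E$, each component of $\mathbb{R} \setminus E$ lies entirely on one side of $x_0$, so $u$ is smooth on each such component with bounded classical derivative and compact support; hence $u \in H^{1,p}(\mathbb{R}\setminus E;\mu)$. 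The real task is to rule out $u \in H^{1,p}(\mathbb{R};\mu)$.

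Suppose for contradiction that $u \in H^{1,p}(\mathbb{R};\mu)$. Since $u$ is already smooth on the open set $\Omega' := \mathbb{R}\setminus\{x_0\}$, restricting a smooth Cauchy sequence for $u$ in $H^{1,p}(\mathbb{R};\mu)$ to $\Omega'$ yields a representative for $u|_{\Omega'}$ in $H^{1,p}(\Omega';\mu)$. By the uniqueness of the $H^{1,p}$-gradient discussed in Section~\ref{sec:sob}, the gradient of $u$ (in the $H^{1,p}(\mathbb{R};\mu)$ sense) must agree with the classical derivative $\eta'\chi_{(x_0,\infty)}$ $\mu$-a.e.\ on $\Omega'$, and hence (because $\mu(\{x_0\})=0$) $\mu$-a.e.\ on $\mathbb{R}$. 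On $I$, $\eta' \equiv 0$, so $\nabla u = 0$ $\mu$-almost everywhere on $I$.

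Now I apply Lemma~\ref{cor:p-PI-for-H} to $u$ on $Q_r := (x_0 - r, x_0 + r)$ for $r$ small enough that $Q_r \subset I$. The right-hand side equals $Cr\,(\intavg_{Q_r}|\nabla u|^p\,d\mu)^{1/p} = 0$. On the left-hand side, $u = \chi_{(x_0,\infty)}$ on $Q_r$, so $u_{Q_r} = \mu((x_0,x_0+r))/\mu(Q_r)$. Applying the lower homogeneity bound~\eqref{dbln-homg} to the ball $B(x_0 + r/2, r/2) = (x_0,x_0+r) \subset B(x_0,r)$, and symmetrically to its reflection, gives $u_{Q_r} \in [c, 1-c]$ for a constant $c > 0$ depending only on $\mu$. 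A direct computation then yields
$$\intavg_{Q_r}|u - u_{Q_r}|\,d\mu \;=\; 2 u_{Q_r}(1-u_{Q_r}) \;\geq\; 2c(1-c) \;>\; 0,$$
contradicting the vanishing right-hand side. The only step requiring real care is the identification of the $H^{1,p}$-gradient with the classical derivative on $\Omega'$; once the uniqueness of the weighted Sobolev gradient is in hand, the rest is immediate and, pleasantly, needs no measure-decay estimates for $\mu(Q_r)$ and no appeal to Theorem~\ref{all-admss-Ap}.
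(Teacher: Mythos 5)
Your proof is correct and follows essentially the same route as the paper's: a jump function across a point $x_0\in E$ together with the Poincar\'e inequality (which forces the averages on the two sides of $x_0$ to coincide once $\nabla u=0$ there) yields the contradiction. The only cosmetic differences are that you multiply by a cutoff and argue directly on $\mathbb R$ instead of invoking the locality of removability (Proposition~\ref{prop:locabc}), and you spell out the gradient-identification step via uniqueness of the $H^{1,p}$-gradient, which the paper's proof uses implicitly when asserting $\nabla u=0$ a.e.
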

\begin{proof}
    Suppose $x_0 \in E$. Let $u(x)=1$ if $x>x_0$ and $u(x)=0$ if $x<x_0$. Then clearly $u \in H^{1,p}((x_0-1,x_0+1)\setminus \{x_0\};\mu)$. If $u$ were to be equal to a function in $H^{1,p}((x_0-1,x_0+1);\mu)$, by Lemma~\ref{prop:avg-difference} and $\nabla u = 0$ a.e., we would have to have averages of $u$ on $(x_0-1,0)$ and $(x_0,x_0+1)$ to be equal, which is not the case.
    
    In view of locality of removability (Proposition~\ref{prop:locabc}), the proof is complete.
\end{proof}

In the proof of our main result (Theorem~\ref{jy1}), starting with a $u$ in $H^{1,s}(\mathbb R^n \setminus E;\mu)$, where $E$ is porous, at the first stage of the argument we will succeed in proving that $u \in H^{1,p}(\mathbb R^n;\mu)$, whereas we need to prove that $u \in H^{1,s}(\mathbb R^n;\mu)$, where $s>p$. This arises the following question:
\begin{question}\label{areULq}
    Let $1 \le p <s $. Suppose $ u \in H^{1,p}(\Omega;\mu)$ and we know that $u$ and $|\nabla u|$ are in $L^s(\Omega;\mu)$. Is it true that $u \in H^{1,s}(\Omega;\mu)$?
\end{question}
In Proposition~\ref{P1p=H1p} we give an affirmative answer to Question~\ref{areULq} when $d\mu=wdx$ for a $p$-admissible weight. We do not know the full generality under which the answer to Question~\ref{areULq} is positive.
\subsection{Locality of removability}\label{sec:locality-of-remov}
Recall the notion of removability of a compact set $E \subset \Omega$ for the Sobolev space $H^{1,p}(\Omega;\mu)$ from Definition~\ref{def:remov-sob}. Results in this section show the local nature of removability, so that, without loss of generality we may choose any convenient $\Omega$ as long as it contains the exceptional set $E$. Analogous statement should be true for $W^{1,p}$ functions.

We will need a gluing result:
\begin{proposition}\label{lem:glue}
    Suppose that $u_1 \in H^{1,p}(\Omega_1;\mu)$, $u_2 \in H^{1,p}(\Omega_2;\mu)$, and $u_1(x)=u_2(x)$ at $\mu$-a.e.\ $x \in \Omega_1 \cap \Omega_2$.  Define $u$ on $\Omega:=\Omega_1 \cup \Omega_2$ by $u(x)=u_1(x)$ on $\Omega_1$ and by $u(x)=u_2(x)$ on $\Omega_2\setminus\Omega_1$. Then $u$ is in $H^{1,p}(\Omega;\mu)$ and $\nabla u$ coincides with $\nabla u_1$, $\mu$-a.e.\ on $\Omega_1$ and with $\nabla u_2$, $\mu$-a.e.\ on $\Omega_2$.
\end{proposition}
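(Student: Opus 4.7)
My plan is to argue in three stages: (i)~define the candidate gradient $v$ on $\Omega$ and show it is $\mu$-a.e.\ well-defined, (ii)~prove $u \in H^{1,p}_{loc}(\Omega;\mu)$ via a partition of unity, and (iii)~upgrade to membership in $H^{1,p}(\Omega;\mu)$ using Proposition~\ref{prop:H1p-loc+Lp=H1p}.

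For stage~(i), set $v := \nabla u_i$ on $\Omega_i$. To see this is well-defined $\mu$-a.e., note that each restriction $u_i|_{\Omega_1 \cap \Omega_2}$ belongs to $H^{1,p}(\Omega_1 \cap \Omega_2;\mu)$ with gradient $\nabla u_i|_{\Omega_1 \cap \Omega_2}$: any approximating sequence $u_i^{(k)} \in C^\infty(\Omega_i)$ converging to $u_i$ in $H^{1,p}(\Omega_i;\mu)$ restricts to a Cauchy sequence in the smaller space, since the weighted $L^p$-norm is monotone in the domain. By hypothesis $u_1 = u_2$ $\mu$-a.e.\ on $\Omega_1 \cap \Omega_2$, so Lemma~\ref{cor:a.e.=H1p} together with the uniqueness of gradients in $H^{1,p}$ (Semmes' theorem, invoked in Section~\ref{sec:sob}) yields $\nabla u_1 = \nabla u_2$ $\mu$-a.e.\ on the overlap. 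Consequently $u \in L^p(\Omega;\mu)$ and $v \in L^p(\Omega;\mu)^n$ are well defined.

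For stage~(ii), fix $\Omega' \subset\subset \Omega$ and take a smooth partition of unity $\{\eta_1, \eta_2\}$ on $\Omega$ subordinate to $\{\Omega_1, \Omega_2\}$ with $\supp \eta_i \subset \Omega_i$ and with $\eta_i, \nabla \eta_i$ bounded on the compact set $\overline{\Omega'}$. For approximations $u_i^{(k)} \in C^\infty(\Omega_i)$ with $u_i^{(k)} \to u_i$ in $H^{1,p}(\Omega_i;\mu)$, form
\[
f_k := \eta_1 u_1^{(k)} + \eta_2 u_2^{(k)},
\]
extending each product $\eta_i u_i^{(k)}$ by $0$ outside $\Omega_i$. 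The inclusion $\supp \eta_i \subset \Omega_i$ makes $f_k \in C^\infty(\Omega)$. A direct product-rule computation, combined with the boundedness of $\eta_i$ and $\nabla \eta_i$ on $\overline{\Omega'}$ and the identity $\eta_1 + \eta_2 = 1$ (hence $\nabla \eta_2 = -\nabla \eta_1$), shows that $f_k \to u$ in $L^p(\Omega';\mu)$ and $\nabla f_k \to v$ in $L^p(\Omega';\mu)^n$; the cross-terms reduce on the overlap to $(u_1^{(k)} - u_2^{(k)}) \nabla \eta_1$, which converges in $L^p$ to zero because $u_1 = u_2$ $\mu$-a.e.\ there, while off the overlap the relevant $\nabla \eta_j$ vanishes since $\eta_j \equiv 0$ in a neighborhood. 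Lemma~\ref{limfunc} then gives $u \in H^{1,p}(\Omega';\mu)$ with gradient $v|_{\Omega'}$.

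For stage~(iii), having established $u \in H^{1,p}_{loc}(\Omega;\mu)$ with $u, |v| \in L^p(\Omega;\mu)$, Proposition~\ref{prop:H1p-loc+Lp=H1p} yields $u \in H^{1,p}(\Omega;\mu)$ with the claimed gradient. I expect the main obstacle to be the bookkeeping in stage~(ii): arranging a partition of unity with supports contained in the individual $\Omega_i$ and controlled gradients on $\overline{\Omega'}$, and verifying that the cross-terms from the product rule vanish in the limit. The latter is precisely where the overlap hypothesis $u_1 = u_2$ $\mu$-a.e.\ is used, and the uniqueness of weak gradients from stage~(i) is what makes the statement about $\nabla u$ on $\Omega_i$ meaningful in the first place.
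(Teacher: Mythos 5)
Your proof is correct and follows essentially the same route as the paper's: a partition of unity subordinate to $\{\Omega_1,\Omega_2\}$ applied to smooth approximating sequences, giving $u\in H^{1,p}_{loc}(\Omega;\mu)$ on compactly contained subsets, then Proposition~\ref{prop:H1p-loc+Lp=H1p} to conclude. Your stage~(i), verifying $\nabla u_1=\nabla u_2$ $\mu$-a.e.\ on the overlap via restriction and uniqueness of gradients, is a welcome explicit justification of the gradient claim that the paper leaves to ``the same calculations.''
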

\begin{proof}
 First, for $i=1,2$, fix $\Omega_i' \subset \subset \Omega_i$ and smooth $\varphi_i$ such that $\supp \varphi_i \subset \Omega_i$ and $\varphi_1+\varphi_2\equiv 1$ on $\Omega_1' \cup \Omega_2'$. For each $i=1,2$, let $\{u_{i,j}\}_{j=1}^\infty$ be a sequence of smooth functions such that $ u_{i,j} \to u_i$ in $H^{1,p}(\Omega_i;\mu)$. It now follows from standard calculations that
 $$
 u_j(x):=\varphi_1(x)u_{1,j}(x)+\varphi_2(x)u_{2,j}(x), \quad x \in \Omega_1' \cup \Omega_2'
 $$
 converges in $H^{1,p}(\Omega_1' \cup \Omega_2';\mu)$ to $u$. Here we have set $\varphi_i(x)u_{i,j}(x) =0 $ outside $\Omega_i$, and this yields smooth functions. The claim about the gradients also follows from the same calculations.

 Since we can exhaust $\Omega_i$ by such $\Omega_i'$, this proves that $u \in H^{1,p}_{loc}(\Omega;\mu)$. Now, Proposition~\ref{prop:H1p-loc+Lp=H1p} proves that $u \in H^{1,p}(\Omega;\mu)$, as desired.
\end{proof}
\begin{proposition}\label{prop:locabc}
    Fix a compact $\mu$-null set $E \subset \mathbb R^n$. The following are equivalent: 
    \begin{itemize}[topsep=0.5ex]
        \item[(a)] $E$ is removable for $H^{1,p}(\mathbb R^n;\mu)$,
        \item[(b)] $E$ is removable for $H^{1,p}(\Omega;\mu)$ for every open set $\Omega \supset E$,
        \item[(c)] $E$ is removable for $H^{1,p}(\Omega;\mu)$ for some open set $\Omega \supset E$.        
    \end{itemize}
\end{proposition}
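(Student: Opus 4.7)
The plan is to observe that three of the four implications are trivial and that the only substantive direction is (c)$\Rightarrow$(b). Indeed, (b)$\Rightarrow$(a) specializes to $\Omega=\mathbb R^n$, (a)$\Rightarrow$(c) specializes to $\Omega_0=\mathbb R^n$, and (b)$\Rightarrow$(c) is immediate. So I would assume removability holds on some fixed open $\Omega_0\supset E$, pick an arbitrary open $\Omega\supset E$ and a function $u\in H^{1,p}(\Omega\setminus E;\mu)$, and work to upgrade $u$ to a member of $H^{1,p}(\Omega;\mu)$.

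The approach is cutoff-and-glue. Since $E$ is compact, choose nested open neighborhoods $V_0\subset\subset V\subset\subset \Omega\cap\Omega_0$ of $E$, and a smooth cutoff $\eta$ with $\eta\equiv 1$ on $\overline{V_0}$ and $\supp\eta\subset V$. Define $\tilde u:=\eta u$ on $V\setminus E$ and $\tilde u:=0$ on $\Omega_0\setminus V$; these are consistent and give $\tilde u$ on all of $\Omega_0\setminus E$. To confirm $\tilde u\in H^{1,p}(\Omega_0\setminus E;\mu)$, select a representative sequence $u_i\in C^\infty(\Omega\setminus E)$ with $u_i\to u$ in $H^{1,p}(\Omega\setminus E;\mu)$ (available via Lemma~\ref{limfunc}). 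Then $\eta u_i$, extended by zero outside $\supp\eta$, lies in $C^\infty(\Omega_0\setminus E)$, and the Leibniz identity $\nabla(\eta u_i)=\eta\nabla u_i+u_i\nabla\eta$, combined with the uniform boundedness of $\eta$ and $\nabla\eta$, makes $\{\eta u_i\}$ Cauchy in $H^{1,p}(\Omega_0\setminus E;\mu)$ with $L^p$-limit $\tilde u$.

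The assumed removability of $E$ on $\Omega_0$ then promotes $\tilde u$ to an element of $H^{1,p}(\Omega_0;\mu)$, and restriction yields $\tilde u\in H^{1,p}(V_0;\mu)$. Since $\eta\equiv 1$ on $V_0$, the functions $u$ and $\tilde u$ agree on $V_0\setminus E$, and because $E$ is $\mu$-null, Lemma~\ref{cor:a.e.=H1p} gives $u\in H^{1,p}(V_0;\mu)$. Finally, apply the gluing Proposition~\ref{lem:glue} with $\Omega_1:=V_0$ and $\Omega_2:=\Omega\setminus E$: they cover $\Omega$, and on the intersection $V_0\setminus E$ the two copies of $u$ coincide, so the output is $u\in H^{1,p}(\Omega;\mu)$, as required. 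The only step that genuinely requires care is the Leibniz-rule computation certifying $\tilde u\in H^{1,p}(\Omega_0\setminus E;\mu)$; everything else is assembly using Lemma~\ref{cor:a.e.=H1p} and Proposition~\ref{lem:glue}.
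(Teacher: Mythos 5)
Your proof is correct and rests on essentially the same cutoff-and-glue technique as the paper: a smooth cutoff equal to $1$ near $E$ and supported in the relevant overlap, zero extension of the truncated function, an application of the assumed removability, and then Proposition~\ref{lem:glue} together with Lemma~\ref{cor:a.e.=H1p} to recover $u$ itself. The only difference is organizational: you concentrate all the work in the single implication (c)$\Rightarrow$(b) (with (b)$\Rightarrow$(a) and (a)$\Rightarrow$(c) trivial), which subsumes the paper's two substantive steps (a)$\Rightarrow$(b) and (c)$\Rightarrow$(a), the latter being handled in the paper only by a ``similar argument'' remark.
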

\begin{proof}
    (a) $\Rightarrow$ (b): Assume (a) holds and fix an arbitrary $u \in  H^{1,p}(\Omega \setminus E;\mu)$. Fix a $ \theta \in C^\infty_0(\Omega)$ such that $\theta \equiv 1$ on
    $$
    \Omega':=\{ x \in \Omega: \dist (x,\partial \Omega) > \frac{1}{2} \dist (E,\partial \Omega) \}.
    $$
    Then, $\theta u \in H^{1,p}(\mathbb R^n \setminus E;\mu)$. Thus, by (a) assumed, $ \theta u \in H^{1,p}(\mathbb R^n;\mu)$. In particular, $u=\theta u \in H^{1,p}(\Omega';\mu)$.

    On the other hand, clearly, $u \in H^{1,p}(\Omega'';\mu)$, where
    $$
    \Omega'':=\{ x \in \Omega: \dist (x,\partial \Omega) < \frac{3}{4} \dist (E,\partial \Omega) \}.
    $$
    Thus, by Proposition~\ref{lem:glue}, $u \in H^{1,p}(\Omega;\mu)$. Since, $u \in  H^{1,p}(\Omega \setminus E;\mu)$ was arbitrary, we have proved (b).

    (b) $\Rightarrow$ (c) is a tautology.

    (c) $\Rightarrow$ (a): Assume (c) holds and fix an arbitrary $u \in  H^{1,p}(\mathbb R^n \setminus E;\mu)$. Then, trivially, $u \in  H^{1,p}(\Omega \setminus E;\mu)$. By (c) assumed, this implies that $u \in  H^{1,p}(\Omega;\mu)$. A similar argument to the above (i.e., choices of open sets that intersect along a very small neighborhood of $\partial \Omega$ and application of Proposition~\ref{lem:glue}) shows that $u \in H^{1,p}(\mathbb R^n;\mu)$. We have proved (a).
\end{proof}
By  compactness of $E$, Propositions~\ref{lem:glue} and~\ref{prop:locabc}, and similar arguments to the ones in their proofs one also proves:
\begin{proposition}
    Let $E \subset \mathbb R^n$ be a compact $\mu$-null set. Then $E$ is removable for $H^{1,p}$ if and only if every $x \in E$ has an open nonempty neighborhood $U_x$ such that $E \cap U_x$ is removable for $H^{1,p}(U_x;\mu)$.
\end{proposition}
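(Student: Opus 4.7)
My plan is to deduce the proposition as a soft corollary of Proposition~\ref{prop:locabc} together with the gluing lemma, Proposition~\ref{lem:glue}, which is precisely what the paragraph preceding the statement hints at. The forward direction is essentially immediate: given that $E$ is removable for $H^{1,p}(\mathbb R^n;\mu)$, I will take $U_x$ for each $x\in E$ to be any bounded open set containing all of $E$, so that $E \cap U_x = E$ is still a compact $\mu$-null subset of $U_x$, and then invoke the equivalence (a)~$\Leftrightarrow$~(b) in Proposition~\ref{prop:locabc} directly to get removability in $H^{1,p}(U_x;\mu)$.

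For the reverse direction, I will first use the compactness of $E$ to extract from the family $\{U_x\}_{x\in E}$ a finite subcover $U_{x_1},\dots,U_{x_N}$, and set $V:=\bigcup_{i=1}^N U_{x_i}$, an open set containing $E$. By the implication (c)~$\Rightarrow$~(a) of Proposition~\ref{prop:locabc}, it then suffices to show that $E$ is removable for $H^{1,p}(V;\mu)$. Fixing an arbitrary $u \in H^{1,p}(V\setminus E;\mu)$, its restriction to each open subset $U_{x_i}\setminus E = U_{x_i}\setminus (E\cap U_{x_i})$ again lies in $H^{1,p}$, obtained by restricting a smooth approximating sequence from $V\setminus E$ (a routine observation). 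The hypothesized local removability of $E\cap U_{x_i}$ in $U_{x_i}$ then upgrades this to $u\in H^{1,p}(U_{x_i};\mu)$ for every $i$.

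The remaining task is to glue these local assertions into a global one on $V$. I will apply Proposition~\ref{lem:glue} iteratively: starting with $U_{x_1}$ and $U_{x_2}$, the compatibility hypothesis of Proposition~\ref{lem:glue} is trivially satisfied because we are gluing $u$ to itself on the overlap, so the lemma yields $u \in H^{1,p}(U_{x_1}\cup U_{x_2};\mu)$; after $N-1$ such steps I reach $u \in H^{1,p}(V;\mu)$, as required. I do not foresee any serious obstacle, since the proposition is a purely structural consequence of the tools already established in the section. The only minor technicality worth double-checking is that $H^{1,p}$ is stable under restriction to open subsets, which follows immediately from restricting the defining approximating sequences.
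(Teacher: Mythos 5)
Your argument is correct and follows essentially the route the paper intends: the paper itself only remarks that the proposition follows "by compactness of $E$, Propositions~\ref{lem:glue} and~\ref{prop:locabc}, and similar arguments," which is exactly your finite-subcover plus iterated-gluing plus (c)~$\Rightarrow$~(a) scheme, with the forward direction read off from (a)~$\Rightarrow$~(b). The only points worth noting are routine: restriction to open subsets and the a.e.\ identification of the glued function with $u$ (Lemma~\ref{cor:a.e.=H1p}), both of which you handle.
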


We state a useful fact about the role of $p$ in removability. The analogous statement for $W^{1,p}$ is almost trivial, but for $H^{1,p}$ we will use the nontrivial result Proposition~\ref{P1p=H1p}.
\begin{proposition}\label{prop:q-remov}
    Every removable set for $H^{1,p}$ is removable for $H^{1,q}$ for every $q>p$.
\end{proposition}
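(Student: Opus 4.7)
The plan is to reduce the $q$-removability to $p$-removability by Hölder's inequality on a bounded domain, apply the hypothesis, and then upgrade the conclusion back to $H^{1,q}$ using Proposition~\ref{P1p=H1p}. Concretely, by Proposition~\ref{prop:locabc} applied to both exponents, it suffices to exhibit one bounded open set $\Omega$ containing $E$ for which $E$ is removable for $H^{1,q}(\Omega;\mu)$. I would pick $\Omega=Q$, a large open cube containing $E$, so that $\mu(Q)<\infty$ by the local finiteness of the doubling measure. Note that by Proposition~\ref{prop:q-admiss}, $w$ is also $q$-admissible, so all of the machinery developed for the exponent $p$ is equally available for $q$.

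Fix $u\in H^{1,q}(Q\setminus E;\mu)$ and, by Lemma~\ref{limfunc}, choose $u_i\in C^\infty(Q\setminus E)$ with $u_i\to u$ in $L^q(Q\setminus E;\mu)$ and $\nabla u_i\to \nabla u$ in $L^q(Q\setminus E;\mu)^n$. Since $\mu(Q)<\infty$, Hölder's inequality implies that $\{u_i\}$ is also Cauchy in $H^{1,p}(Q\setminus E;\mu)$ and converges to $u$ in that norm. By Lemma~\ref{limfunc}, $u\in H^{1,p}(Q\setminus E;\mu)$, and by the uniqueness of the gradient under our $p$-admissibility hypothesis, its $H^{1,p}$-gradient coincides $\mu$-a.e.\ with the original $H^{1,q}$-gradient $\nabla u$. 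Now the hypothesis that $E$ is removable for $H^{1,p}$ yields, through Proposition~\ref{prop:locabc}, that $u\in H^{1,p}(Q;\mu)$.

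At this point the $H^{1,p}$-gradient of $u$ on $Q$ restricts $\mu$-a.e.\ on $Q\setminus E$ to the previous gradient (again by uniqueness), and since $E$ is $\mu$-null, the gradient of $u$ on $Q$ satisfies $|\nabla u|\in L^q(Q;\mu)$. Likewise $u\in L^q(Q;\mu)$. We are thus exactly in the setting of Question~\ref{areULq} with exponents $p<s=q$, and Proposition~\ref{P1p=H1p} delivers $u\in H^{1,q}(Q;\mu)$. Since $u$ was an arbitrary element of $H^{1,q}(Q\setminus E;\mu)$, this proves that $E$ is removable for $H^{1,q}(Q;\mu)$, and a final invocation of Proposition~\ref{prop:locabc} (the implication (c)$\Rightarrow$(a), now at the exponent $q$) completes the proof.

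The only nontrivial step is the last upgrade from $H^{1,p}$ back to $H^{1,q}$; this is precisely where Proposition~\ref{P1p=H1p} is indispensable and is the reason the paper flags the analogous assertion for $W^{1,p}$ as ``almost trivial''. A minor bookkeeping point is the identification of the various gradients: the $H^{1,q}$-gradient on $Q\setminus E$, the $H^{1,p}$-gradient on $Q\setminus E$, and the $H^{1,p}$-gradient on $Q$ must all agree $\mu$-a.e., but this is handled cleanly by the Semmes uniqueness result recorded in Section~\ref{sec:sob} together with the fact that $E$ is $\mu$-null.
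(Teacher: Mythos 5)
Your proposal is correct and follows essentially the same route as the paper's proof: reduce to a cube via locality (Proposition~\ref{prop:locabc}), pass from $H^{1,q}$ to $H^{1,p}$ on $Q\setminus E$ by H\"older on the approximating smooth sequence, apply $p$-removability, and then upgrade back to $H^{1,q}$ via Proposition~\ref{P1p=H1p} using $\mu(E)=0$. The extra care you take with identifying the gradients via the Semmes uniqueness result is a harmless elaboration of what the paper leaves implicit.
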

\begin{proof}
    In light of the locality of removability, it is enough to consider, without loss of generality, the case where $E \subset Q_0=(0,1)^n$.
    
    Suppose $u \in H^{1,q}(Q_0\setminus E;\mu)$. By definition, there exist $\varphi_j \in C^\infty(Q_0\setminus E)$ that converge in $H^{1,q}(Q_0\setminus E;\mu)$ to $u$. By H\"older's inequality $\varphi_j$ converges to $u$ in the $\|\cdot\|_{1,p}$-norm. So, $u \in H^{1,p}(Q_0\setminus E;\mu)$. By the assumption, then $u \in H^{1,p}(Q_0;\mu)$. But $u$ and $|\nabla u|$ are in $L^q(Q_0;\mu)$ since $\mu(E)=0$. Now, the fact that $u \in H^{1,q}(Q_0;\mu)$ follows from Proposition~\ref{P1p=H1p}.
\end{proof}

\subsection{Density of Lipschitz functions in \protect \boldmath \texorpdfstring{$H^{1,p}$}{H1p}}\label{sec:Lip-dense}
Although smooth functions are by definition dense in $H^{1,p}$, in practice, working with them is more demanding. More flexible are the Lipschitz functions. Every Lipschitz function is Fr\'echet differentiable a.e.\ by Rademacher's theorem, it is weakly differentiable and and its  Fr\'echet derivative coincides with its weak derivative. Lipschitz function will play a key role in our construction of the extension operator. 

\begin{theorem}\label{thm:loclipRH}
Every locally Lipschitz function $u\colon \Omega \to \mathbb R$ is in $H^{1,p}_{loc}(\Omega;\mu)$ for every $p\ge 1$. Moreover, $\nabla u$ coincides with the classical derivative of $u$, which is also its weak derivative.
\end{theorem}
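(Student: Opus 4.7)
The plan is to reduce to a compactly contained subdomain and then use standard mollification. Fix $\Omega' \subset\subset \Omega$; I only need to show $u \in H^{1,p}(\Omega';\mu)$, because arbitrariness of $\Omega'$ then gives the $H^{1,p}_{loc}$ statement. On $\overline{\Omega'}$, which is compact and contained in $\Omega$, the locally Lipschitz $u$ is in fact globally Lipschitz, so by a McShane/Whitney extension I may assume without loss of generality that $u$ is Lipschitz on all of $\mathbb R^n$ with a finite constant $L=\Lip(u)$. By Rademacher's theorem $u$ is classically (and weakly) differentiable $dx$-a.e., with $|\nabla u(x)|\le L$ wherever defined, and since $\mu\ll dx$ the same bound holds $\mu$-a.e.

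Next, I would mollify: set $u_\varepsilon := u * \eta_\varepsilon \in C^\infty(\mathbb R^n)$ using a standard smooth mollifier $\eta_\varepsilon$. Two convergences are needed along $\varepsilon\to 0$:
\begin{enumerate}
\item[(i)] $u_\varepsilon \to u$ uniformly on $\overline{\Omega'}$ (this is standard for uniformly continuous $u$);
\item[(ii)] $\nabla u_\varepsilon = (\nabla u) * \eta_\varepsilon$ satisfies $\|\nabla u_\varepsilon\|_\infty \le L$ and converges to $\nabla u$ at every Lebesgue point of $\nabla u$, hence $dx$-a.e.\ and therefore $\mu$-a.e.\ by absolute continuity of $\mu$.
\end{enumerate}
Because $\mu(\overline{\Omega'})<\infty$ by doubling, uniform convergence in (i) upgrades to $L^p(\Omega';\mu)$ convergence, and the uniform bound $L$ together with the $\mu$-a.e.\ convergence in (ii) lets me invoke dominated convergence to conclude $\nabla u_\varepsilon \to \nabla u$ in $L^p(\Omega';\mu)^n$. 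Lemma~\ref{limfunc} then yields $u \in H^{1,p}(\Omega';\mu)$ with $L^p$-gradient equal to the classical (equivalently weak) gradient of $u$.

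The last thing to tie down is that the gradient produced as a limit of $\nabla u_\varepsilon$ coincides with the $H^{1,p}$-gradient of $u$ in the sense of the embedding \eqref{embed-H1p}: this is precisely the uniqueness-of-gradients result (due to Semmes, cf.\ \cite{Hei:Ko:94}) mentioned in Section~\ref{sec:sob}, which applies under our standing $p$-admissibility assumption. So the $H^{1,p}$-gradient, the classical a.e.\ derivative, and the weak derivative all agree $\mu$-a.e.

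I do not expect any serious obstacle: the only nontrivial ingredients are Rademacher's theorem, absolute continuity of $\mu$ with respect to Lebesgue measure (one of our standing assumptions, Remark~\ref{rem:standing-assum}), and the already-cited uniqueness of $H^{1,p}$-gradients for $p$-admissible weights. The mollifier step is routine; the only minor care is in ensuring the mollification radius is smaller than $\dist(\Omega', \partial \Omega)$ so that $u_\varepsilon$ is well-defined and smooth on a neighborhood of $\overline{\Omega'}$.
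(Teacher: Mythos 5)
Your proposal is correct and is essentially the paper's argument: the paper proves Theorem~\ref{thm:loclipRH} by the same standard (unweighted) mollification, citing \cite[Lemma 1.11]{Heikima}, with the absolute continuity of $\mu$ with respect to Lebesgue measure playing exactly the role you give it. Your filled-in details (McShane extension, Rademacher, dominated convergence with the Lipschitz bound, Lemma~\ref{limfunc}, and uniqueness of gradients) are all sound.
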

Theorem~\ref{thm:loclipRH} is proved via approximations by (the standard, unweighted) convolution. For details consult \cite[Lemma 1.11]{Heikima}. This is where we need the absolute continuity of $\mu$ with respect to the Lebesgue measure.
\begin{corollary}
     Claims of Lemmas~\ref{cor:p-PI-for-H} and~\ref{prop:avg-difference}, about Poincar\'e inequality and the differences of averages, hold for locally Lipschitz functions $u$ on $\Omega$.
\end{corollary}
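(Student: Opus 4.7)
The plan is to reduce the claim to the already-proved statements for $H^{1,p}$-functions, Lemmas~\ref{cor:p-PI-for-H} and~\ref{prop:avg-difference}, by exploiting the fact that locally Lipschitz functions lie in $H^{1,p}_{loc}$ and have well-defined classical gradients.

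Fix a locally Lipschitz $u\colon \Omega \to \mathbb R$ and consider first the setting of Lemma~\ref{cor:p-PI-for-H}: let $Q \subset\subset \Omega$. Pick an intermediate open set $\Omega'$ with $Q \subset\subset \Omega' \subset\subset \Omega$; such an $\Omega'$ exists because $\overline{Q}$ is compact and contained in the open set $\Omega$. By Theorem~\ref{thm:loclipRH}, $u \in H^{1,p}(\Omega';\mu)$ for every $p\ge 1$, and the gradient provided by that embedding coincides $\mu$-a.e. with the classical (Rademacher) derivative. Apply Lemma~\ref{cor:p-PI-for-H} to $u \in H^{1,p}(\Omega';\mu)$ with the cube $Q \subset\subset \Omega'$: this produces the Poincar\'e estimate
\begin{equation*}
\intavg_Q |u - u_{\cube}| \, d\mu \le C(\diam Q)\Big(\intavg_Q |\nabla u|^p\, d\mu\Big)^{1/p},
\end{equation*}
with the constant $C$ being the one from the definition of the $p$-Poincar\'e inequality, and with $\nabla u$ interpreted $\mu$-a.e. as the classical derivative (which is what the $H^{1,p}$-gradient is, by Theorem~\ref{thm:loclipRH}). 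The restriction from $\Omega$ to $\Omega'$ is transparent since all quantities in the inequality involve only values on $Q \subset \Omega'$.

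For the difference-of-averages statement, repeat the same maneuver: given cubes $Q_1 \subset Q_0 \subset \kappa Q_1$ with $Q_0 \subset\subset \Omega$, interpose $\Omega'$ with $Q_0 \subset\subset \Omega' \subset\subset \Omega$, note that $u \in H^{1,p}(\Omega';\mu)$ by Theorem~\ref{thm:loclipRH}, and apply Lemma~\ref{prop:avg-difference} directly to this restriction. The constant that appears depends only on $\kappa$ and the doubling and Poincar\'e constants of $\mu$, and so transfers verbatim to the locally Lipschitz setting.

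There is essentially no obstacle here; the whole content of the corollary is the identification, via Theorem~\ref{thm:loclipRH}, of the classical gradient of a locally Lipschitz function with its $H^{1,p}$-gradient, plus the standard device of interposing a compactly contained intermediate domain to make the ``$\subset\subset$'' requirement of the source lemmas match the hypotheses. No additional properties of Lipschitz functions, and in particular no use of the $(1,p)$-Poincar\'e inequality for upper gradients, are needed beyond what Theorem~\ref{thm:loclipRH} already supplies.
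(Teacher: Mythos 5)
Your proposal is correct and follows exactly the route the paper intends: the corollary is stated without proof precisely because it reduces, via Theorem~\ref{thm:loclipRH}, to membership in $H^{1,p}_{loc}(\Omega;\mu)$ with the classical gradient, after which Lemmas~\ref{cor:p-PI-for-H} and~\ref{prop:avg-difference} apply on an interposed $\Omega'$ with $Q\subset\subset\Omega'\subset\subset\Omega$. Nothing to add.
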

\begin{corollary}\label{cor:lip-are-dense}
    The subset of $H^{1,p}(\Omega;\mu)$ consisting of locally Lipschitz functions is dense in $H^{1,p}(\Omega;\mu)$.
\end{corollary}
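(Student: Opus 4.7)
The plan is to observe that this corollary is essentially tautological once the definitions are unwound. By construction (see Section~\ref{sec:sob}), the space $H^{1,p}(\Omega;\mu)$ is the completion of the linear subspace
\[
\mathcal{S} := \{u \in C^\infty(\Omega) : \|u\|_{1,p} < \infty\}
\]
in the $\|\cdot\|_{1,p}$-norm, and after the identification of elements of $H^{1,p}$ with their $L^p$-representatives, $\mathcal{S}$ sits as a dense subset of $H^{1,p}(\Omega;\mu)$. So it is enough to show that $\mathcal{S}$ is contained in the set $\mathcal{L}$ of locally Lipschitz functions belonging to $H^{1,p}(\Omega;\mu)$; then $\mathcal{S}\subset \mathcal{L} \subset H^{1,p}(\Omega;\mu)$ forces $\mathcal{L}$ to be dense as well.

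To check that inclusion I would argue as follows. Any $u\in C^\infty(\Omega)$ is automatically locally Lipschitz: for every $\Omega' \subset\subset \Omega$, the gradient $\nabla u$ is bounded on $\overline{\Omega'}$, so the mean value inequality supplies a Lipschitz constant for $u|_{\Omega'}$. Moreover, for $u\in \mathcal{S}$ we have $\|u\|_{1,p} < \infty$ by definition. Theorem~\ref{thm:loclipRH} then gives $u \in H^{1,p}_{loc}(\Omega;\mu)$ and identifies its classical derivative with its Sobolev gradient, while Proposition~\ref{prop:H1p-loc+Lp=H1p} upgrades this to $u \in H^{1,p}(\Omega;\mu)$. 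Hence $\mathcal{S}\subset \mathcal{L}$, and density follows.

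There is no real obstacle here; the substantive content has already been absorbed into the earlier results cited, most importantly Theorem~\ref{thm:loclipRH}, which is where absolute continuity of $\mu$ with respect to the Lebesgue measure is used to ensure that the classical derivative of a locally Lipschitz function genuinely represents its Sobolev gradient. Without that identification one could still run the same argument, but the density claim would have to be phrased in terms of the abstract completion rather than in terms of honest locally Lipschitz representatives.
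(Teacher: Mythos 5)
Your argument is correct and matches the paper's (implicit) reasoning: smooth functions with finite $\|\cdot\|_{1,p}$-norm are dense in $H^{1,p}(\Omega;\mu)$ by the very definition of the space as a completion, and they are locally Lipschitz, so the locally Lipschitz members of $H^{1,p}(\Omega;\mu)$ form a dense subset. The detour through Theorem~\ref{thm:loclipRH} and Proposition~\ref{prop:H1p-loc+Lp=H1p} to place $\mathcal{S}$ inside $H^{1,p}(\Omega;\mu)$ is harmless but unnecessary, since any $u\in C^\infty(\Omega)$ with $\|u\|_{1,p}<\infty$ lies in $H^{1,p}(\Omega;\mu)$ tautologically (take the constant approximating sequence in Lemma~\ref{limfunc}).
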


\section{Sobolev Extension from Ring to Cube}\label{sec:extension-lemmas}
In the rest of the paper, we assume $n \ge 2$, and $d\mu=wdx$ for a $p$-admissible weight on $\mathbb R^n$. 
 
The assumption $n \ge 2$ is justified by Lemma~\ref{n=1-nonremov}, which says that no nonempty set is removable in $\mathbb R^1$. Interestingly, it is Lemma~\ref{lem:half-Q-ring} below that fails for $n=1$ and explains why the proof in the next section will not apply in $\mathbb R^1$.

Suppose $Q\subset \mathbb R^n$ is a cube and $R \subset Q$ is a (uniform) ring along its perimeter. As it is without loss in generality for the ensuing analysis, we may imagine (figure~\ref{fig:big-ring})
\begin{align*}
    Q&=\{(x_1,\cdots,x_n) \in \mathbb R^n: \max_i |x_i| < \ell/2 \},  \\
    R&=\{(x_1,\cdots,x_n) \in Q: (1-\alpha)\ell/2 < \max_i |x_i| < \ell/2 \}.
\end{align*}

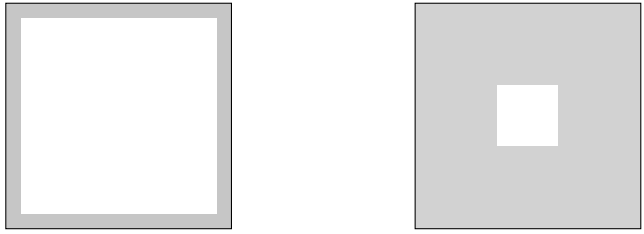
\begin{figure}[h]\label{fig:big-ring}
  \centering
\begin{minipage}{0.3\textwidth}
    \begin{tikzpicture}
    \centering
    \def\outerSize{3cm}
     \def\ringThickness{.2cm}
  
     \fill[gray!45] (0,0) rectangle (\outerSize,\outerSize);
  
     \fill[white] (\ringThickness,\ringThickness) rectangle 
               ({\outerSize-\ringThickness},{\outerSize-\ringThickness});
  
     \draw[thin] (0,0) rectangle (\outerSize,\outerSize);
    \end{tikzpicture}
\end{minipage}
\hspace{1cm}
\begin{minipage}{0.3\textwidth}
    \centering
    \begin{tikzpicture}
    \def\outerSize{3cm}
     \def\ringThickness{1.1cm}
  
      \fill[gray!35] (0,0) rectangle (\outerSize,\outerSize);
  
      \fill[white] (\ringThickness,\ringThickness) rectangle 
               ({\outerSize-\ringThickness},{\outerSize-\ringThickness});
  
      \draw[thin] (0,0) rectangle (\outerSize,\outerSize);
    \end{tikzpicture}
\end{minipage}
\caption{Examples of $Q$ with their ring $R$ (shaded) for small and large $\alpha$. For thin rings, we first extend to the adjacent ring of equal width, then repeat with their union.}
\end{figure}

These are precisely balls and annuli in $\mathbb R^n$ equipped with the supremum norm, but we wish to avoid that terminology and notation. We will write $\ell(Q):=\ell$, $\ell(R):=\alpha \ell,$ and $ \alpha_{\cube}:=\alpha$. The parameter $\alpha_{\cube} \in (0,1)$ is \emph{the relative thickness} of the ring and will play an important role. Note that $Q$ and $\alpha$ determine $R$ uniquely.

The aim of this section is to construct a bounded linear extension\footnote{We say $Eu$ is an extension of $u\colon A \to \mathbb R$ if $Eu(x)=u(x)$ at $\mu$-a.e.\ $x \in A$.} operator from (basically) $H^{1,p}(R;\mu)$ to $H^{1,p}(Q;\mu)$. The motivation, clarified in the next section, is that $Q$ will be one of the finitely many cubes that cover the null porous set $E$ that is supposed to be removable and $R\subset Q$ is a ring that is away from $E$. We wish to use the behavior of our Sobolev function on $R$ to study it in $Q$. This fulfills the step (2) in the outline in the Introduction.

We must mention the well-known extension theorems in \cite{Chua94,chua2006} for the weighted Sobolev functions (of higher orders as well), where the assumptions on the weight are doubling and a Poincar\'e inequality, as in our setting.

Unfortunately, the extension theorems in \cite{Chua94} and~\cite{chua2006} are not sufficient for our needs, at least without significant modifications. The norm of the extension operator goes to infinity when either $\alpha \to 0$ or $\ell(Q) \to 0$, both of which are cases that are very much relevant to us. This is expected of course. However, we need quite explicit quantitative bounds on the norm. Because the weight does not scale like the Lebesgue measure, we cannot argue with scalings.

What we will do, is that we will first extend from a ring \emph{only} to the adjacent ring of equal width, with an operator norm independent of $Q$ and $\alpha$. Then we repeat the extension with the union of the two rings on which we have the function defined now. Repeating inductively, we will succeed in ``filling in'' almost half of $Q$. We then end with one final extension to all of $Q$. Although again the norm of the extension from the original ring to $Q$ goes to infinity as $\alpha \to 0$, we will obtain a good enough explicit estimate on its growth (Theorem~\ref{extsob}).

By the density of Lipschitz functions (Corollary~\ref{cor:lip-are-dense}), in order to prove the extension theorem for $H^{1,p}$, it is sufficient to prove it for Lipschitz functions.

Continuing with notation from above, when $\alpha<1/2$, define
$$
2R:=\{(x_1,\cdots,x_n): (1-2\alpha)\ell/2 < \max_i|x_i| < \ell/2\}.
$$
\begin{lemma}\label{lem:reflect-one-time}
Let $\alpha_{\cube} < 1/2$. There exists a linear extension operator $E\colon \Lip(R) \to  \Lip(2R)$  that satisfies
\begin{equation*}
         \| Eu\|_{L^{p}(2R;\mu)} \le C_1 \|u\|_{L^{p}(R;\mu)}, \quad \| \nabla Eu \|_{L^{p}(2R;\mu)} \le  C_1 \|\nabla u\|_{L^{p}(R;\mu)},
\end{equation*}
where $C_1\ge 2$ is a constant that does not depend on $Q$, $\alpha$, or $u$.
\end{lemma}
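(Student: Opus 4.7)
The plan is a Whitney--Boman-style extension: decompose the inner ring $R':=2R\setminus R$ into Whitney cubes, associate to each one a companion cube in $R$ of comparable size and position, and define $Eu$ on $R'$ as a partition-of-unity average of the cube averages of $u$ over those companions. A naive pointwise reflection $u\mapsto u\circ r$ with $r\colon R'\to R$ would not give uniform-in-$\alpha$ estimates, because any natural such $r$ has Jacobian and Lipschitz factors blowing up as $\alpha\uparrow 1/2$; cube averaging instead absorbs every $\alpha$-dependent factor into the doubling constant of $\mu$.

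\emph{Construction.} Write $\rho(x)=\max_i|x_i|$, so $R=\{(1-\alpha)\ell/2<\rho<\ell/2\}$ and $R'=\{(1-2\alpha)\ell/2<\rho\le(1-\alpha)\ell/2\}$ are rings of equal radial thickness $\alpha\ell/2$ meeting along $\Sigma:=\{\rho=(1-\alpha)\ell/2\}$. Fix a Whitney decomposition $\{Q_j\}$ of $R'$ with $\ell(Q_j)\approx\dist(Q_j,\Sigma)$ and a subordinate smooth partition of unity $\{\varphi_j\}$ with $\|\nabla\varphi_j\|_\infty\lesssim\ell(Q_j)^{-1}$ and $\sum_j\varphi_j\equiv 1$ on $R'$. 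For each $j$ pick a companion $Q_j^*\subset R$ with $\ell(Q_j^*)\approx\ell(Q_j)$ and $\dist(Q_j,Q_j^*)\lesssim\ell(Q_j)$ (for small Whitney cubes this is the reflection of $Q_j$ across the face of $\Sigma$ nearest to it; for Whitney cubes too bulky to fit when reflected, any well-chosen cube in $R$ of matching size). Define
\[
Eu(x)=\begin{cases} u(x), & x\in R,\\[2pt] \displaystyle\sum_{j}\varphi_j(x)\,u_{Q_j^*}, & x\in R'.\end{cases}
\]
Linearity of $E$ is clear. For $x\in R'$ near $\Sigma$ only Whitney cubes with $\ell(Q_j)\to 0$ contribute, so their companions cluster at the same boundary point and $u_{Q_j^*}\to u(x)$ by Lipschitzness of $u$; $Eu$ is therefore continuous across $\Sigma$, piecewise Lipschitz, hence $Eu\in\Lip(2R)$.

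\emph{Integral estimates.} The doubling property gives $\mu(2Q_j)\approx\mu(Q_j)\approx\mu(Q_j^*)$, with constant depending only on the doubling constant of $\mu$. Jensen's inequality and bounded overlap of $\{2Q_j\}$ and $\{Q_j^*\}$ then yield
\[
\int_{R'}|Eu|^p\,d\mu\;\lesssim\;\sum_j\mu(2Q_j)\intavg_{Q_j^*}|u|^p\,d\mu\;\lesssim\;\sum_j\int_{Q_j^*}|u|^p\,d\mu\;\lesssim\;\int_R|u|^p\,d\mu,
\]
which gives the $L^p$ bound. For the gradient, on each $2Q_k$ rewrite $Eu-u_{Q_k^*}=\sum_{j\sim k}\varphi_j(u_{Q_j^*}-u_{Q_k^*})$ (only $O(1)$ nonzero terms), so that $|\nabla Eu|\lesssim\ell(Q_k)^{-1}\sum_{j\sim k}|u_{Q_j^*}-u_{Q_k^*}|$ on $2Q_k$. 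A Boman-chain estimate along a bounded chain of cubes in $R$ connecting $Q_j^*$ to $Q_k^*$, via Lemma~\ref{prop:avg-difference} and the $p$-Poincar\'e inequality of $\mu$, bounds each difference by $\ell(Q_k)\bigl(\intavg_{CQ_k^*}|\nabla u|^p\,d\mu\bigr)^{1/p}$; summing with doubling and bounded overlap produces $\int_{R'}|\nabla Eu|^p\,d\mu\lesssim\int_R|\nabla u|^p\,d\mu$.

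\emph{Main obstacle.} The real work is constructing the companion family $\{Q_j^*\}$ and their Boman chains uniformly in $\alpha$ and $Q$, particularly near corners of $(1-\alpha)Q$ (where $\Sigma$ has no single flat face to reflect across) and for $\alpha$ close to $1/2$ (where $R'$ is bulkier than a thin ring and large Whitney cubes cannot simply be reflected into $Q$). The equal-width assumption---$R$ and $R'$ of the same radial thickness $\alpha\ell/2$---is what keeps $\ell(Q_j^*)\approx\ell(Q_j)$ and chain lengths uniformly bounded, allowing all the $\alpha$-dependence to be absorbed into the doubling and $p$-Poincar\'e constants of $\mu$. Carrying out this geometric/measure-theoretic bookkeeping is the business of Section~\ref{sec:Proof-of-once-reflect}.
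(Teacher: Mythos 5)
Your proposal is correct and follows essentially the same route as the paper's Section~\ref{sec:Proof-of-once-reflect}: a Whitney-type decomposition of the inner ring, companion cubes in $R$, the extension $\sum_j \varphi_j\,u_{\dube_j^*}$, and gradient control via the telescoping identity together with Lemma~\ref{prop:avg-difference}. The paper's only cosmetic difference is that instead of Boman chains it uses, for each pair of neighboring Whitney cubes, a single cube $T_{j,j_0}\subset R$ containing both companions with bounded overlap, which plays exactly the role of your chains.
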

Lemma~\ref{lem:reflect-one-time} is the technical backbone of the entire argument in this paper. We postpone its proof to Section~\ref{sec:Proof-of-once-reflect} in the interest of a smoother presentation and to emphasize that similar removability results would follow in other settings if one proves an analogue of such extension theorems.

As mentioned already, we will apply Lemmas~\ref{lem:reflect-one-time} to $2R$ and then to $2(2R)$, etc., until we extend to at least ``half of'' $Q$, Once there, we will use the next result to extend to all of $Q$:
\begin{lemma}\label{lem:half-Q-ring}
Let $\alpha_{\cube} \ge 1/2$. There exists a linear extension operator $E\colon \Lip(R) \to  \Lip(Q)$ that satisfies
\begin{equation*}
         \| Eu\|_{L^{p}(Q;\mu)} \le C_0 \|u\|_{L^{p}(R;\mu)}, \quad \| \nabla Eu \|_{L^{p}(Q;\mu)} \le  C_0 \|\nabla u\|_{L^{p}(R;\mu)},
\end{equation*}
where $C_0$ is a constant that does not depend on $Q$, $\alpha$, or $u$.
\end{lemma}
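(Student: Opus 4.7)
The plan is to first reduce the problem to the fixed geometric case $\alpha = 1/2$ by a concentric cube argument, and then, in that case, to construct the extension by a Whitney-type averaging across $\partial Q_0$, using the doubling and Poincar\'e properties of $\mu$.

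Reduction to $\alpha = 1/2$: set $a := (1-\alpha)\ell/2 \le \ell/4$ so that $Q_0 := Q\setminus R = [-a,a]^n$, and let $Q' := 2Q_0 = [-2a,2a]^n$. Then $Q' \subset Q$ and the ring $Q' \setminus Q_0$, of relative thickness $1/2$ in $Q'$, is contained in $R$. Granting the lemma for $\alpha = 1/2$ applied to the pair $(Q', Q' \setminus Q_0)$, I obtain $\bar E u\in \Lip(Q')$ extending $u|_{Q'\setminus Q_0}$; setting $Eu := \bar E u$ on $Q'$ and $Eu := u$ on $R \setminus Q'$ produces the desired extension on $Q$ (the two pieces agree on $\partial Q'$, both equal to $u$ there, and the $L^p$ bounds transfer up to a constant depending only on the constant in the $\alpha = 1/2$ case since $R\setminus Q'\subset R$).

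Core case $\alpha = 1/2$: now $Q_0 = (1/2)Q$, $R = Q \setminus Q_0$, and by doubling one has $\mu(R) \asymp \mu(Q_0) \asymp \mu(Q)$ with constants depending only on the doubling constant of $\mu$. I would define $Eu$ on $Q_0$ by a Whitney-averaging construction: take a Whitney decomposition $\{W_k\}$ of $Q_0$ with $\ell(W_k) \asymp \dist(W_k, \partial Q_0)$, associate to each $W_k$ a reflected Whitney cube $W_k^* \subset R$ of comparable side length across the nearest face of $\partial Q_0$, and let $\{\phi_k\}$ be a Lipschitz partition of unity subordinate to a mild enlargement of $\{W_k\}$ with $|\nabla \phi_k| \lesssim \ell(W_k)^{-1}$. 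Then set
\begin{equation*}
Eu(x) := \sum_k \phi_k(x)\, u_{W_k^*} \quad \text{for } x \in Q_0, \qquad Eu := u \quad \text{on } R.
\end{equation*}
Continuity of $Eu$ across $\partial Q_0$ follows from Lebesgue differentiation: as $W_k, W_k^*$ shrink toward a boundary point $x_0 \in \partial Q_0$, the averages $u_{W_k^*}$ converge to $u(x_0)$. For the $L^p$ estimates, the telescoping $Eu(x) - u_{W_{k_0}^*} = \sum_k \phi_k(x)(u_{W_k^*} - u_{W_{k_0}^*})$ gives $|\nabla Eu(x)| \lesssim \sum_{W_k \ni x} \ell(W_k)^{-1} |u_{W_k^*} - u_{W_{j(k)}^*}|$ summed over neighboring Whitney cubes $W_{j(k)}$, and each difference is bounded via the $p$-Poincar\'e inequality (Lemma~\ref{cor:p-PI-for-H}) and Lemma~\ref{prop:avg-difference} applied along a bounded-length chain of cubes inside $R$, yielding $|u_{W_k^*} - u_{W_{j(k)}^*}|^p \lesssim \ell(W_k)^p \intavg_{cW_k^*} |\nabla u|^p\,d\mu$. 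Summing over $k$ with the bounded multiplicity of the Whitney decomposition gives $\int_{Q_0} |\nabla Eu|^p\,d\mu \le C_0 \int_R |\nabla u|^p\,d\mu$, and a similar (simpler) argument handles $\int_{Q_0} |Eu|^p\,d\mu$ using the comparability $\mu(W_k) \asymp \mu(W_k^*)$.

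The hard part is the geometric bookkeeping required to set up the pairing $W_k \leftrightarrow W_k^*$ together with the connecting chains inside $R$. One must verify that (i) the reflected cubes $W_k^*$ actually lie in $R$, which is exactly where $\alpha = 1/2$ enters: the ring $R$ has outward ``depth'' $\ell/4$ matching the half-side $a = \ell/4$ of $Q_0$, so reflected cubes fit; (ii) doubling yields $\mu(W_k) \asymp \mu(W_k^*)$, which follows from $W_k \cup W_k^*$ lying in a common enclosing cube of comparable side straddling $\partial Q_0$; and (iii) adjacent Whitney cubes in $Q_0$ correspond to reflected cubes in $R$ that can be joined by short chains of cubes staying in $R$, so that the accumulated Poincar\'e constants remain bounded uniformly in $\ell$ by a quantity depending only on $n$ and on the doubling and Poincar\'e constants of $\mu$. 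This is essentially a weighted analogue of the classical Whitney--Jones extension, and it is at this step that $p$-admissibility of $w$, rather than mere doubling, is used in an essential way.
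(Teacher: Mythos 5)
Your proposal is correct and takes essentially the same route as the paper: the paper proves this lemma by observing that the Whitney-type averaging extension $\overline{u}=\sum_j u_{\dube_j^*}\varphi_j$ constructed for Lemma~\ref{lem:reflect-one-time} is in fact well defined on all of $Q$, and for $\alpha_{\cube}\ge 1/2$ one simply modifies the Whitney cubes so that they cover $Q\setminus\overline{R}$, after which the estimates of Lemma~\ref{lem:Eu-bounds:appndx} apply verbatim --- i.e.\ exactly your reflected-cube/partition-of-unity construction with chaining via the doubling and Poincar\'e properties. Your preliminary reduction to the case $\alpha_{\cube}=1/2$ is harmless but unnecessary, since the same construction works directly for any $\alpha_{\cube}\ge 1/2$.
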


We continue by assuming Lemmas~\ref{lem:reflect-one-time} and \ref{lem:half-Q-ring}. The reader shall decide to read their proofs in Section~\ref{sec:Proof-of-once-reflect} before proceeding, if they wish so.

For  $0 < \alpha_{\cube} <1$, let $m \in \{0,1,2,\cdots\}$ be the smallest integer such that $2^m\alpha \ge 1/2$. Equivalently, it is determined by  $2^m \ell(R) < \ell(Q) \le 2^{m+1}\ell(R)$.
\begin{proposition}\label{prop:fullextlip}
Let $0 < \alpha_{\cube} <1$. There exists a linear extension operator $E\colon \Lip(R) \to  \Lip(Q)$
that satisfies
\begin{align*}
         \| Eu\|_{L^{p}(Q;\mu)} \le C_0C_1^m \|u\|_{L^{p}(R;\mu)}, \quad \| \nabla Eu \|_{L^{p}(Q;\mu)} \le  C_0C_1^m \|\nabla u\|_{L^{p}(R;\mu)},
\end{align*}
where $C_1\ge 2$ and $C_0>0$ are structural constant that do not depend  on $Q$, $\alpha$, or $u$, and $m$ is the integer given by $2^m \ell(R) < \ell(Q) \le 2^{m+1}\ell(R)$. In particular, $C_1^m < \alpha_{\cube}^{-\log_2C_1} \le C_1^{m+1}$.
\end{proposition}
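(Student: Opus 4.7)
The plan is to iterate Lemma~\ref{lem:reflect-one-time} along a chain of rings of doubling relative thickness, and then finish with one application of Lemma~\ref{lem:half-Q-ring}. I would set $R_0:=R$ and inductively let $R_{k+1}$ be the ring called ``$2R_k$'' in the statement of Lemma~\ref{lem:reflect-one-time}; unwinding the coordinate description shows that $R_k$ is the outer ring of $Q$ of relative thickness $2^k\alpha$. The relevant index is the smallest $m\ge 0$ with $2^m\alpha\ge 1/2$, which is exactly the $m$ from the statement, since the condition $2^m\ell(R)<\ell(Q)\le 2^{m+1}\ell(R)$ is just $2^m\alpha<1\le 2^{m+1}\alpha$.

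For each $k=0,1,\ldots,m-1$, the relative thickness $2^k\alpha$ of $R_k$ is strictly less than $1/2$, so Lemma~\ref{lem:reflect-one-time}, applied with $R_k$ in place of $R$ and $R_{k+1}$ in place of $2R$, produces a linear extension operator $E_k\colon\Lip(R_k)\to\Lip(R_{k+1})$ whose function- and gradient-norms are each bounded by the universal constant $C_1$. I would then compose these into a linear $F:=E_{m-1}\circ\cdots\circ E_0\colon\Lip(R)\to\Lip(R_m)$; because each $E_k$ is an extension, $Fu$ agrees with $u$ on $R$. Telescoping the per-step bounds yields
\begin{equation*}
\|Fu\|_{L^p(R_m;\mu)}\le C_1^m\|u\|_{L^p(R;\mu)},\qquad \|\nabla Fu\|_{L^p(R_m;\mu)}\le C_1^m\|\nabla u\|_{L^p(R;\mu)}.
\end{equation*}

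Now $R_m$ has relative thickness $2^m\alpha\ge 1/2$ inside $Q$, so Lemma~\ref{lem:half-Q-ring}, applied with $R_m$ playing the role of $R$, supplies a linear $E_\star\colon\Lip(R_m)\to\Lip(Q)$ whose function- and gradient-norms are bounded by $C_0$. Setting $Eu:=E_\star(Fu)$ gives a linear extension operator $\Lip(R)\to\Lip(Q)$ that, by chaining the two estimates, satisfies the claimed bounds with constant $C_0C_1^m$.

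For the final quantitative sandwich, the minimality of $m$ gives $2^{m-1}\alpha<1/2\le 2^m\alpha$ when $m\ge 1$, i.e.\ $m<-\log_2\alpha\le m+1$; since $C_1\ge 2>1$, raising $C_1$ to these exponents yields $C_1^m<C_1^{-\log_2\alpha}=\alpha^{-\log_2 C_1}\le C_1^{m+1}$, with the case $m=0$ (where $\alpha\in[1/2,1)$) immediate from $\alpha<1$. I do not expect any real obstacle in carrying this out — the proposition is a clean telescoping/bookkeeping argument — since all the genuine analytic work is absorbed into Lemma~\ref{lem:reflect-one-time}, whose proof is deferred to Section~\ref{sec:Proof-of-once-reflect}.
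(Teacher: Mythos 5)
Your proposal is correct and follows exactly the paper's argument: apply Lemma~\ref{lem:reflect-one-time} successively $m$ times to the outer rings of relative thickness $2^k\alpha_{\cube}<1/2$, then finish with Lemma~\ref{lem:half-Q-ring}, telescoping the bounds to $C_0C_1^m$. Your bookkeeping for $m$ and the bound $C_1^m<\alpha_{\cube}^{-\log_2 C_1}\le C_1^{m+1}$ matches the paper's (very brief) proof, just written out in more detail.
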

\begin{proof}
    Apply Lemma~\ref{lem:reflect-one-time} successively $m$-times (possibly $m=0$) and then apply Lemma~\ref{lem:half-Q-ring}. Notice that $C_1$ does not change in the iterations.
\end{proof}

By approximation we can now extend Sobolev functions, but we need to bypass a small technicality concerning local Lipschitz versus Lipschitz class.
\begin{definition}
For an open set $\Omega$, let $H^{1,p}(\overline{\Omega};\mu) \subset H^{1,p}({\Omega};\mu)$ stand for the subspace consisting of $u$ such that there exists $\widetilde{u} \in H^{1,p}(\Omega';\mu)$, for some open $\Omega' \supset \overline{\Omega}$, such that $u(x)=\widetilde{u}(x)$ a.e.\ on $\Omega$.
\end{definition}
For the so-called extension domains, the two spaces will coincide. This includes Lipschitz domains, e.g.,  our $R$. But we will not need even this fact. For extension theorems for weighted Sobolev functions see \cite{Chua94,chua2006} and the references therein.
\begin{theorem}\label{extsob}
Let $0 < \alpha_{\cube} <1$. There exists a linear extension operator 
$
E\colon H^{1,p}(\overline{R};\mu) \to H^{1,p}(Q;\mu)
$
that satisfies
\begin{align*}
         \| Eu\|_{L^{p}(Q;\mu)} \le C_0C_1^m \|u\|_{L^{p}(R;\mu)}, \quad \| \nabla Eu \|_{L^{p}(Q;\mu)} \le  C_0C_1^m \|\nabla u\|_{L^{p}(R;\mu)},
\end{align*}
where $C_1\ge 2$ and $C_0>0$ are structural constant that do not depend on $u$, $Q$, or $\alpha$, and $m$ is the integer given by $2^m \ell(R) < \ell(Q) \le 2^{m+1}\ell(R)$. In particular, $C_1^m < \alpha_{\cube}^{-\log_2C_1} \le C_1^{m+1}$.
\end{theorem}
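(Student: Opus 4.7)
The plan is to upgrade the Lipschitz extension operator of Proposition~\ref{prop:fullextlip} to all of $H^{1,p}(\overline{R};\mu)$ by a standard density-plus-completeness argument, using the fact that the constant $C_0 C_1^m$ is independent of the particular function. The density of (locally) Lipschitz functions from Corollary~\ref{cor:lip-are-dense}, together with the defining feature of $H^{1,p}(\overline{R};\mu)$ that its elements admit a Sobolev extension to an open neighborhood of $\overline{R}$, will let us approximate any input $u$ by Lipschitz functions defined on a slightly larger set, whose restrictions to $R$ lie in $\Lip(R)$.

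In detail: given $u \in H^{1,p}(\overline{R};\mu)$, fix an open $\Omega' \supset \overline{R}$ and $\widetilde{u} \in H^{1,p}(\Omega';\mu)$ agreeing with $u$ a.e.\ on $R$. By Corollary~\ref{cor:lip-are-dense}, choose locally Lipschitz $\widetilde{u}_j \in H^{1,p}(\Omega';\mu)$ with $\widetilde{u}_j \to \widetilde{u}$ in $H^{1,p}(\Omega';\mu)$. Since $\overline{R} \subset \Omega'$ is compact, each $\widetilde{u}_j$ is Lipschitz on a neighborhood of $\overline{R}$, so $u_j := \widetilde{u}_j|_R \in \Lip(R)$. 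Now apply the Lipschitz extension $E$ from Proposition~\ref{prop:fullextlip} to obtain $E u_j \in \Lip(Q)$. By linearity of $E$ on Lipschitz inputs and the norm estimates of Proposition~\ref{prop:fullextlip},
\begin{equation*}
\| E u_j - E u_k\|_{L^p(Q;\mu)} \le C_0 C_1^m \| u_j - u_k\|_{L^p(R;\mu)},
\end{equation*}
and analogously for the gradients. Since $\widetilde{u}_j \to \widetilde{u}$ in $H^{1,p}(\Omega';\mu)$ implies $u_j \to u$ in $H^{1,p}(R;\mu)$, the sequence $\{Eu_j\}$ is Cauchy in $H^{1,p}(Q;\mu)$; by the Banach-space completeness of $H^{1,p}(Q;\mu)$ it converges to some element, which we define to be $Eu$.

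It remains to check well-definedness, linearity, the norm bounds, and that $Eu$ restricts to $u$ on $R$. Well-definedness follows because any two approximating sequences $\widetilde{u}_j, \widetilde{u}_j'$ yield restrictions $u_j, u_j'$ with $u_j - u_j' \to 0$ in $H^{1,p}(R;\mu)$, hence $Eu_j - Eu_j' \to 0$ in $H^{1,p}(Q;\mu)$ by the same estimate. Linearity in $u$ is inherited from linearity of $E$ on Lipschitz functions and the linearity of limits. The norm bounds pass to the limit by continuity of the $L^p$-norms. Finally, $Eu_j = u_j$ on $R$, so in $L^p(R;\mu)$ we have $Eu = \lim Eu_j = \lim u_j = u$, which means $Eu(x) = u(x)$ for $\mu$-a.e.\ $x \in R$. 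The factor estimate $C_1^m < \alpha_{\cube}^{-\log_2 C_1} \le C_1^{m+1}$ comes directly from the inequality $2^m \ell(R) < \ell(Q) \le 2^{m+1}\ell(R)$ rewritten as $2^{-(m+1)} \le \alpha_{\cube} < 2^{-m}$. The only nontrivial ingredient of the whole argument is Proposition~\ref{prop:fullextlip} itself; once it is in hand, the passage from Lipschitz to $H^{1,p}$ is routine and the ``main obstacle'' here is essentially bookkeeping to ensure the approximating sequence is taken from the outer neighborhood so that the restrictions genuinely lie in $\Lip(R)$ rather than merely in $\Lip_{loc}(R)$.
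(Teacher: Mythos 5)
Your argument is correct and is essentially the same as the paper's proof: approximate $u$ by Lipschitz functions on $R$ (via the extension $\widetilde{u}$ to a neighborhood of $\overline{R}$ and Corollary~\ref{cor:lip-are-dense}), apply the bounded linear operator of Proposition~\ref{prop:fullextlip}, and pass to the limit in the complete space $H^{1,p}(Q;\mu)$, using $Eu_j=u_j$ on $R$ to see the limit extends $u$. Your explicit justification that the restrictions lie in $\Lip(R)$ (compactness of $\overline{R}\subset\Omega'$) is a detail the paper asserts implicitly, but the route is identical.
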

\begin{proof}\footnote{In abstract language, the proof is that Lipschitz functions are dense in $H^{1,p}$ and any bounded linear map on a subspace extends to a bounded linear map on its closure.}
Corollary~\ref{cor:lip-are-dense} implies that Lipschitz functions on $R$ are dense in $H^{1,p}(\overline{R};\mu)$. So, we fix a sequence of Lipschitz functions $u_j$ on $R$ such that $u_j \to u$ on $R$ in the Sobolev norm. In particular, they form a Cauchy sequence.

Let $E$ be the extension operator in Proposition~\ref{prop:fullextlip}; this double-use of the notation ``$E$'' will be justified in the course of the proof. Hence, $Eu_j$ are Lipschitz function on $Q$, $Eu_j = u_j$ on $R$ for each $j$, and for each $j$
\begin{equation*}
     \| Eu_j\|_{L^{p}(Q;\mu)} \le C_0C_1^m \|u_j\|_{L^{p}(R;\mu)}, \quad \| \nabla Eu_j \|_{L^{p}(Q;\mu)} \le  C_0C_1^m \|\nabla u_j\|_{L^{p}(R;\mu)}.
\end{equation*}

By the linearity and boundedness of $E$, $Eu_j$ form a Cauchy sequence in $H^{1,p}(Q;\mu)$. Thus, by the completeness of $H^{1,p}(Q;\mu)$, $Eu_j$ have a limit $\overline{u} \in H^{1,p}(Q;\mu)$.

Since $Eu_j = u_j$ on $R$ for each $j$, by standard measure theory facts we deduce that $\overline{u}=u$ $\mu$-a.e.\ on $R$. Moreover, by passing to the limit as $j \to \infty$ in the inequalities above, we get
    \begin{equation*}
         \| \overline{u} \|_{L^{p}(Q;\mu)} \le C_0C_1^m \|u\|_{L^{p}(R;\mu)}, \quad \| \nabla \overline{u} \|_{L^{p}(Q;\mu)} \le  C_0C_1^m \|\nabla u\|_{L^{p}(R;\mu)}.
\end{equation*}
It is now easy to see that $Eu:=\overline{u}$ is a linear extension operator that coincides with the operator from Proposition~\ref{prop:fullextlip} for Lipschitz $u$ (hence justifying the double-use of the notation $E$) and satisfies the claims of the theorem.
\end{proof}
Theorem~\ref{extsob} will be enough for our purposes, since $R$ will be away from the exceptional set $E$, and hence functions in $H^{1,p}(\mathbb R^n\setminus E;\mu)$ will be Sobolev on a slightly larger open set containing $\overline{R}$. 
\section{Removability of Porous Sets}\label{sec:remov}
Recall the definitions and notations of the ring $R \subset Q$ and the relative thickness $\alpha_{\cube} \in (0,1)$ associated to a cube $Q$ from Section~\ref{sec:extension-lemmas}.

Fix a compact $\mu$-null set $E \subset \mathbb R^n$. Suppose that for each $k=1,2,\ldots$, there exists a collection $\mathcal{Q}_k$ of finitely many cubes $Q$ such that
\begin{itemize}
    \item $E \subset \cup \mathcal{Q}_k$ and cubes in $\mathcal{Q}_k$ are pairwise disjoint, and
    \item for every $Q \in \mathcal{Q}_{k+1}$ there exists  $Q' \in \mathcal{Q}_{k}$ such that $Q \subset (1-\alpha_{\cube'})Q'$.
\end{itemize}
 It follows from these conditions that for every $k$ and every cube $Q \in \mathcal{Q}_k$, the ring $R \subset Q$ has a positive distance to $E$.
 
 We will use the notation
$$
c_1:=\log_2 C_1,
$$
where $C_1 \ge 2$ is the constant in the claim of Theorem~\ref{extsob}.

We say a compact $\mu$-null set $E \subset \mathbb R^n$ is $(s,p)$-porous\footnote{This notion does depend on $\mu$ and should be called $(s,p;\mu)$-porous, but for us $\mu$ will always be understood.} with parameters $s>p\ge 1$, if we can find collections $\mathcal{Q}_k$ of cubes as above that moreover satisfy:
$$
   \lim_{k\to \infty} \mu (\cup {\mathcal Q_k}) = 0
$$   
and
\begin{equation}
\label{adv3-prime}
  \sum_k \Bigl(\sum_{\mathcal Q_k}\alpha_{\cube}^{\frac{-sc_1}{s-1}}\mu(R)^{\frac{s-p}{(s-1)p}}\Bigr)^{1-s}= \infty.
\end{equation}
\begin{remark}\label{rmk:larger-s-porous}
   Since $1-s<0$, it is easy to see from \eqref{adv3-prime} that every $(s,p)$-porous set $E$, $s>p\ge 1$, is also $(s',p)$-porous for all $s'>s$. This is consistent with Theorem~\ref{jy1} and the general fact that if $E$ is removable for $H^{1,s}(\mathbb R^n;\mu)$, then it is removable for $H^{1,s'}(\mathbb R^n;\mu)$ for all $s'>s$ (Proposition~\ref{prop:q-remov}). 
\end{remark}
\begin{remark}\label{rmk:smaller-p-porous}
    Similarly, every $(s,p)$-porous set $E$, $s>p> 1$, is also $(s,p')$-porous for all $p > p'\ge 1$. Thus, porous sets remain porous if $s$ increases or $p$ decreases (or both).
\end{remark}

In Section~\ref{sec:equiv-por} we give other sufficient conditions for porosity that use (the arguably more natural) $\mu(Q)$ or $\ell(Q)$ rather than $\mu(R)$. But first let us state and prove our main results, which are the re-statements of Theorems A and B from the Introduction.
\begin{theorem}
\label{jy1}
Suppose $p\ge 1$ and $d\mu=wdx$ for a $p$-admissible weight $w$ on $\mathbb R^n, n\ge 2$. If $E\subset \mbbr^n$ is $(s,p)$-porous for some $s>p$, then it is removable for $H^{1,s}(\mathbb R^n;\mu)$.
\end{theorem}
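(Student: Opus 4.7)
The proof plan is to implement the four-step outline from the introduction. First, fix $u \in H^{1,s}(\mathbb{R}^n \setminus E;\mu)$. By the locality of removability (Proposition~\ref{prop:locabc}), after multiplying by a compactly supported smooth cutoff I may assume $u$ has bounded support. The hypothesis that $E$ is $(s,p)$-porous supplies, at each scale $k$, a pairwise disjoint cubical cover $\mathcal{Q}_k$ of $E$ in which each $Q$ contains a ring $R$ with positive distance to $E$, $\mu(\cup \mathcal{Q}_k) \to 0$, and the nestedness condition $Q \subset (1-\alpha_{Q'})Q'$ forces the rings at distinct scales to be disjoint. The latter is crucial: it gives $\sum_k \eta_k^s \le \|\nabla u\|_{L^s(\mathbb{R}^n;\mu)}^s < \infty$, where $\eta_k := \|\nabla u\|_{L^s(\cup R_k;\mu)}$, and analogously with $|u|$ in place of $|\nabla u|$.

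Next I would construct $u_k \in H^{1,p}(\mathbb{R}^n;\mu)$ by replacing $u|_Q$ with the extension of $u|_R$ supplied by Theorem~\ref{extsob}, gluing with $u$ on $\mathbb{R}^n \setminus \cup \mathcal{Q}_k$ via Proposition~\ref{lem:glue}; note that $\overline{R} \subset \mathbb{R}^n \setminus E$, so $u$ is indeed in $H^{1,p}(\overline{R};\mu)$ by H\"older. The per-cube bound of Theorem~\ref{extsob} reads $\|\nabla u_k\|_{L^p(Q;\mu)}^p \le C\alpha_Q^{-pc_1}\|\nabla u\|_{L^p(R;\mu)}^p$. I would first upgrade $L^p(R)$ to $L^s(R)$ by H\"older (producing a factor $\mu(R)^{(s-p)/s}$), then apply the natural discrete H\"older with conjugate exponents $(s/p, s/(s-p))$ across $Q \in \mathcal{Q}_k$, to obtain
\[
\|\nabla u_k\|_{L^p(\cup \mathcal{Q}_k;\mu)}^p \le C\, B_k^{(s-p)/s}\, \eta_k^p, \qquad B_k := \sum_{\mathcal{Q}_k}\alpha_Q^{-psc_1/(s-p)}\mu(R).
\]
The quantity $B_k$ is related to $A_k := \sum_{\mathcal{Q}_k}\alpha_Q^{-sc_1/(s-1)}\mu(R)^{(s-p)/((s-1)p)}$ appearing inside \eqref{adv3-prime} by $B_k = \sum_Q x_Q^\lambda$, where $x_Q$ is the summand of $A_k$ and $\lambda = p(s-1)/(s-p)$; the assumption $p \ge 1$ is exactly what ensures $\lambda \ge 1$, so the elementary power-sum inequality $\sum x_Q^\lambda \le (\sum x_Q)^\lambda$ gives $B_k \le A_k^{\lambda}$, hence the clean estimate
\[
\|\nabla u_k\|_{L^p(\cup \mathcal{Q}_k;\mu)} \le C'\, A_k^{(s-1)/s}\, \eta_k,
\]
with the analogous bound on $\|u_k\|_{L^p(\cup \mathcal{Q}_k;\mu)}$ using $\tilde\eta_k:=\|u\|_{L^s(\cup R_k;\mu)}$.

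The main obstacle, and the heart of the proof, is step (3). I would argue by contradiction: if the gradient part above failed to have a subsequence tending to zero, there would exist $\delta > 0$ with $\|\nabla u_k\|_{L^p(\cup \mathcal{Q}_k;\mu)} \ge \delta$ for all large $k$; raising the previous inequality to the $s$-th power and rearranging gives $A_k^{1-s} \le (C'/\delta)^s\eta_k^s$, whence $\sum_k A_k^{1-s} \le (C'/\delta)^s\|\nabla u\|_{L^s(\mathbb{R}^n;\mu)}^s<\infty$, contradicting \eqref{adv3-prime}. Running the same argument on the $L^p$ part and passing to a common subsequence yields $k_j$ along which both $\|\nabla u_{k_j}-\nabla u\|_{L^p(\mathbb{R}^n;\mu)}$ and $\|u_{k_j}-u\|_{L^p(\mathbb{R}^n;\mu)}$ tend to zero (the parts coming from $u$ itself vanishing automatically from $\mu(\cup \mathcal{Q}_k)\to 0$ combined with integrability of $|u|^p$ and $|\nabla u|^p$). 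Lemma~\ref{limfunc} then places $u$ in $H^{1,p}(\mathbb{R}^n;\mu)$. Finally, since $u$ and $|\nabla u|$ are in $L^s(\mathbb{R}^n;\mu)$ by hypothesis, step (4) is concluded by Proposition~\ref{P1p=H1p}, which upgrades $H^{1,p}$-membership to $H^{1,s}$-membership.
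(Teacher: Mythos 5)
Your proposal is correct and follows essentially the same route as the paper: extension from the rings via Theorem~\ref{extsob} plus gluing, a discrete H\"older estimate reducing everything to the porosity sum (your exponents $(s/(s-p),\,s/p)$ together with the power-sum inequality for $\lambda=p(s-1)/(s-p)\ge 1$ give exactly the paper's bound $\lesssim A_k^{(s-1)/s}\eta_k$, which the paper obtains instead via $\ell^p\subset\ell^1$ and H\"older with exponents $(s/(s-1),s)$), disjointness of the rings within and across scales, extraction of a subsequence on which the norms over $\cup\mathcal{Q}_k$ vanish, and the final upgrade by Proposition~\ref{P1p=H1p}. The one small point to tighten (which the paper also treats lightly) is the ``common subsequence'': rather than extracting subsequences for $u_k$ and $\nabla u_k$ separately and intersecting them, run the convergent-versus-divergent comparison once with $\eta_k^s+\tilde\eta_k^s$ against $A_k^{1-s}$, which produces a single subsequence along which both $\|u_k\|_{L^p(\cup\mathcal{Q}_k;\mu)}$ and $\|\nabla u_k\|_{L^p(\cup\mathcal{Q}_k;\mu)}$ tend to zero.
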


\begin{proof}
Let $\mathcal Q_k,  k=1,\cdots$, denote the collections of cubes given in the definition of porosity for $E$. Throughout, $Q_0$ will be a large cube such that $Q_0$ contains the union of all cubes at all levels. 

Given $ u \in H^{1,s}(Q_0 \setminus E;\mu)$ the goal is to prove that $ u \in H^{1,s}(Q_0;\mu)$. We will first prove that $ u \in H^{1,p}(Q_0;\mu)$, and then improve it to $ u \in H^{1,s}(Q_0;\mu)$.

 By H\"older's inequality $ u \in H^{1,p}(Q_0 \setminus E;\mu) $. Fix $k\in \mathbb N$. For each $Q \in \mathcal Q_k$ its ring $R$ is at a positive distance from $E$, thus $ u \in H^{1,p}(\overline{R};\mu)$. We extend $u|_{R}$ to a function in $H^{1,p}(Q;\mu)$ by Theorem~\ref{extsob}.
 
 Using Proposition~\ref{lem:glue} we glue these extensions to the restriction of $u$ to $Q_0 \setminus \cup \mathcal{Q}_k$ and obtain a function in $H^{1,p}(Q_0;\mu)$. This extension depends on $k$, so, let us denote it by $u_k$. Observe that $u_k$ may differ from $u$ only inside $\cup \mathcal Q_k$.

We will prove that $\{u_k\}_k$ has a Cauchy subsequence in $L^{p}(Q_0;\mu)$, and then we prove that there is a further subsequence such that $\{\nabla u_k\}_k$ is Cauchy in $L^{p}(Q_0;\mu)^n$.

We begin with some bounds on $u_k$. By Theorem~\ref{extsob} and Hölder's inequality, for each $Q \in  \mathcal Q_k$,
\begin{equation}\label{eq:begin}
      \| u_k\|_{L^{p}(Q;\mu)}\lesssim \alpha_{\cube}^{-c_1}\|u\|_{L^{p}(R;\mu)} \lesssim \alpha_{\cube}^{-c_1}(\mu(R))^{\frac{1}{p}-\frac{1}{s}}\|u\|_{L^{s}(R;\mu)}.
\end{equation}
Since the cubes in $\mathcal Q_k$ are pairwise disjoint, applying Hölder's inequality to the discrete sum yields:
\begin{equation}
\label{adv1}
  \|u_k\|_{L^{p}(\cup \mathcal Q_k)}\lesssim \Bigl(\sum_{\mathcal Q_k}\alpha_{\cube}^{\frac{-sc_1}{s-1}}\mu(R)^{\frac{s-p}{(s-1)p}}\Bigr)^{\frac{s-1}{s}}\Bigl(\sum_{\mathcal Q_k}\int_{R}|u|^s d\mu\Bigr)^{\frac{1}{s}}.
\end{equation}
Fix $m>n$. Then,
$$
u_m(x)-u_n(x)=\begin{cases}
            0, \; &\text{if $x \in Q_0 \setminus \cup \mathcal Q_n$}, \\
            u(x)-u_n(x), \; &\text{if $x \in \cup \mathcal Q_n \setminus (\cup \mathcal Q_m)$}, \\
            u_m(x)-u_n(x), \; &\text{if $x \in \cup \mathcal Q_m$}.
        \end{cases}
$$
Therefore, by triangle inequality,
\begin{equation}
\label{adv0}
  \| u_m- u_n\|_{L^p(Q_0;\mu)} \le \| u_m \|_{L^p(\cup \mathcal Q_m;\mu)} + \|u_n\|_{L^p(\cup \mathcal Q_n;\mu)} + \|u\|_{L^p(\cup \mathcal Q_n;\mu)},
\end{equation}
where the correction constant depends only on $p$.

We wish to prove that there is a subsequence such that $\| u_m- u_n\|_{L^p(Q_0;\mu)}$ converges to zero if $m$ and $n$ go to infinity along that subsequence. The last expression on the right-hand side of \eqref{adv0} converges to zero as $n \to \infty$ since $\mu(\cup \mathcal Q_n) \to 0$.

It remains to show that some subsequence of $\|u_n\|_{L^p(\cup \mathcal Q_n;\mu)}$ converges to zero as $n$ goes to infinity; by symmetry the same claim would hold true if $m$ is chosen from the same subsequence. Let us record four observations:
\begin{enumerate}
    \item We can rewrite~\eqref{adv1} as follows (to be justified in a moment)
\begin{equation}
\label{adv1pr}
  \left(\|u_k\|_{L^{p}(\cup \mathcal Q_k;\mu)}\right)^s\lesssim \frac{\sum_{\mathcal Q_k}\int_{R}|u|^s d\mu}{\Bigl(\sum_{\mathcal Q_k}\alpha_{\cube}^{\frac{-sc_1}{s-1}}\mu(R)^{\frac{s-p}{(s-1)p}}\Bigr)^{1-s}}\, \cdot
\end{equation}
\item Because rings are disjoint within and across all levels
\begin{equation}
\label{adv2}
  \sum_k \sum_{\mathcal Q_k}\int_{R}|u|^s d\mu < \infty.
\end{equation}
\item By the $(s,p)$-porosity assumption\footnote{This is how the precise technical porosity requirement was ``discovered''.}
\begin{equation}
\label{adv3}
  \sum_k \left(\sum_{\mathcal Q_k}\alpha_{\cube}^{\frac{-sc_1}{s-1}}\mu(R)^{\frac{s-p}{(s-1)p}}\right)^{1-s}= \infty.
\end{equation}
\item For positively valued sequences,
$$
\text{$\sum_k A_k$ converges and $\sum_k B_k$ diverges $\implies$ $ \liminf_{k\to \infty} \frac{A_k}{B_k} = 0$.}
$$
\end{enumerate}
By applying observations (2)-(4) in (1), we conclude that a subsequence of $\|u_k\|_{L^{p}(\cup \mathcal Q_k)}$ converges to zero. Therefore, if $m$ and $n$ are chosen from this subsequence, we have proven that the first two terms in~\eqref{adv0} converge to zero as $n \to \infty$. The last term also converges to zero, as was discussed above. Therefore, by \eqref{adv0}, $\{u_k\}_k$ has a Cauchy subsequence in $L^{p}(Q_0;\mu)$. Without loss of generality, let us assume $\{u_k\}_k$ itself is Cauchy in $L^p(Q_0;\mu)$.

The fact that a subsequence of $\{\nabla u_k\}_k$ is Cauchy in $L^p(Q_0;\mu)$ can be proved by repeating the above argument almost verbatim, where, beginning with inequality \eqref{eq:begin}, we can replace $u$'s with $\nabla u$'s and $u_k$'s with $\nabla u_k$ and the estimates remain true as they are. We leave out the details.

After passing to a subsequence if needed, $\{u_k\}_k$ is Cauchy in $L^p(Q_0;\mu)$ and  $\{\nabla u_k\}_k$ is Cauchy in $L^p(Q_0;\mu)$,  simultaneously. Therefore,  $\{u_k\}_k$ is Cauchy in $H^{1,p}(Q_0;\mu)$. Denote its limit by $\overline{u} \in H^{1,p}(Q_0;\mu)$. Because for each $k$, we have $u_k=u$ on the complement of $\cup \mathcal{Q}_k$ and $\mu(\cup \mathcal{Q}_k) \to 0$, we see that $\overline{u}=u$ $\mu$-a.e., which proves (Lemma~\ref{cor:a.e.=H1p}) that $u \in H^{1,p}(Q_0;\mu)$. This concludes the first step in the proof.

It remains to prove that $u \in H^{1,s}(Q_0;\mu)$. From the assumptions, $u$ and $|\nabla u|$ are in $L^s(Q_0;\mu)$, and we have just proved that  $u \in H^{1,p}(Q_0;\mu)$. From Proposition~\ref{P1p=H1p} it follows that $u \in H^{1,s}(Q_0;\mu)$.
\end{proof}

\begin{theorem}\label{thm:jyu:-1}
    Suppose $p > 1$ and $d\mu=wdx$ for a $p$-admissible weight $w$ on $\mathbb R^n, n\ge 2$. Then there exists $\eps_0 >0$ such that for every $0< \eps \le \eps_0$, every $(p,p-\eps)$-porous set is removable for $H^{1,p}(\mathbb R^n;\mu)$.
\end{theorem}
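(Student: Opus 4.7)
My plan is to deduce Theorem~\ref{thm:jyu:-1} as a direct corollary of Theorem~\ref{jy1} by invoking the self-improvement of the Poincar\'e inequality. The idea is that $(p,p-\eps)$-porosity is designed precisely so that, if the weight happens to also satisfy a $(p-\eps)$-Poincar\'e inequality, then Theorem~\ref{jy1} (applied with $s=p$ and the ``$p$'' of that theorem replaced by $p-\eps$) yields removability for $H^{1,p}(\mathbb R^n;\mu)$. Since $p>1$, the Keith--Zhong theorem supplies exactly the extra room needed.

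First I would invoke the open-ended property of the Poincar\'e inequality due to Keith and Zhong~\cite{Keith-Zhong:ann}: since $p>1$ and $\mu$ is doubling and supports a $p$-Poincar\'e inequality, there exists $\eps_0\in(0,p-1)$ such that $\mu$ supports a $(p-\eps_0)$-Poincar\'e inequality. Fix any $0<\eps\le \eps_0$. Then $\mu$ is $(p-\eps_0)$-admissible, and by Proposition~\ref{prop:q-admiss} it is also $(p-\eps)$-admissible (since $p-\eps\ge p-\eps_0$). Thus the weight $w$ now falls within the scope of Theorem~\ref{jy1} with the smaller exponent $p-\eps$ playing the role of $p$.

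Next let $E\subset\mathbb R^n$ be $(p,p-\eps)$-porous. Noting that $p>p-\eps$, this is exactly an $(s,q)$-porosity assumption in the sense of Section~\ref{sec:remov} with $s=p$ and $q=p-\eps$. Applying Theorem~\ref{jy1} (with ``$p$'' replaced by $p-\eps$ and ``$s$'' replaced by $p$) yields that $E$ is removable for $H^{1,p}(\mathbb R^n;\mu)$, which is the desired conclusion.

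The main (only) nontrivial input is the Keith--Zhong self-improvement, which we are quoting as a black box; everything else is just an exponent bookkeeping argument. I do not expect any other genuine obstacle, since the porosity condition~\eqref{adv3-prime} has already been set up so that the exponent of $\alpha_{\cube}$ and the exponent of $\mu(R)$ depend on both $s$ and $p$ in a way that is continuous (and well-defined) as one decreases the lower index from $p$ to $p-\eps$.
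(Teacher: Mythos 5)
Your proof is correct and is essentially the paper's own argument: both deduce the theorem from Theorem~\ref{jy1} combined with the Keith--Zhong self-improvement of the Poincar\'e inequality. The only (minor) difference is in the bookkeeping step: the paper's displayed proof uses Remark~\ref{rmk:smaller-p-porous} to upgrade $(p,p-\eps)$-porosity to $(p,p-\eps_0)$-porosity and applies Theorem~\ref{jy1} at the single exponent $p-\eps_0$, whereas you upgrade admissibility via Proposition~\ref{prop:q-admiss} and apply Theorem~\ref{jy1} at $p-\eps$ directly --- which is precisely the alternative proof the paper itself records immediately after its proof of Theorem~\ref{thm:jyu:-1}.
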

\begin{proof}[Proof of Theorem~\ref{thm:jyu:-1}]    
    Let $\eps_0>0$ be such that $\mu$ satisfies a $(p-\eps_0)$-Poincar\'e inequality. Such $\eps_0$ exists due to a deep theorem of Keith-Zhong~\cite{Keith-Zhong:ann}. (Here we may need to appeal to the equality of $H^{1,p}$ with the Newtonian-Sobolev space on the weighted $\mathbb R^n$ \cite[Appendix~A2]{Bj:Bj:11}.)
        
    By remark~\ref{rmk:smaller-p-porous}, every $(p,p-\eps)$-porous set with $0<\eps <\eps_0$ is also a $(p,p-\eps_0)$-porous set. Thus, it will be enough to prove removability of $(p,p-\eps_0)$-porous sets for $H^{1,p}$.
    
    Now if $E$ is a $(p,p-\eps_0)$-porous set then the assumptions of Theorem~\ref{jy1} are satisfied with $s$ taken as $p$ and $p$ taken as $p-\eps_0$. Hence, $E$ is removable for $H^{1,p}(\mathbb R^n;\mu)$.
\end{proof}
As an alternative proof, but still relying on the result of Keith-Zhong~\cite{Keith-Zhong:ann}, we could appeal to the fact (Proposition~\ref{prop:q-admiss}) that every $(p-\eps_0)$-admissible weight is also $(p-\eps)$-admissible for $\eps<\eps_0$, and then apply Theorem~\ref{jy1}.
\section{Annular Decay Property and Porosity}\label{sec:equiv-por}
It is more natural to have a porosity condition in terms of $\mu(Q)$ (better yet $\ell(Q)$) rather than $\mu(R)$. To state such a condition we need to recall some well-known notions regarding doubling measures.

For every doubling measure on $\mathbb R^n$ there exits $0 < \delta \le n$, depending only on $\mu$, such that for every pair of cubes $Q' \subset Q$,
    \begin{equation}
    \label{apeq1-hei}  
    \frac{\mu(Q')}{\mu(Q)} \lesssim \Bigl(\frac{\ell(Q')}{\ell(Q)}\Bigr)^{\delta}.
    \end{equation}
See for example \cite[Chapter~13]{Hei:01}. (To be technically accurate, we can imagine $\mathbb R^n$ is equipped with the infinity norm.)

We decompose the ring into congruent essentially disjoint cubes $Q'$ with $\ell(Q')=\ell(R)/2=\alpha \ell(Q)/2$. The number of such cubes is comparable, with a constant that only depends on $n$, to $(1/\alpha)^{n-1}$. Thus, by applying \eqref{apeq1-hei} we obtain
\begin{equation}
    \label{apeq1}
\mu(R) \lesssim \alpha^{\delta+1-n}\mu(Q).
\end{equation}
The fact that $\delta+1-n \le 1$ is slightly interesting, but \eqref{apeq1} is useless when $\delta+1-n <0$, because it is even weaker than the obvious $\mu(R) \le \mu(Q)$.
\begin{lemma}[\cite{Buckley99}]\label{lem:buck-annular-decay}
    For every doubling measure on $\mathbb R^n, n \ge 2$, there exit constants $0 < \sigma \le 1$ and $C>0$, depending only on $\mu$, such that for any cube $Q$ and its ring $R$ of relative thickness $\alpha$,   we have $    \mu(R) \le C \alpha^{\sigma}\mu(Q).    $
\end{lemma}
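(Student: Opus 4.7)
The plan is to bootstrap a single-step contraction of the ring measure and iterate dyadically. Concretely, I would produce constants $0<\theta<1$ and $\alpha_0\in(0,1/4)$, depending only on the doubling constant of $\mu$ and on $n$, such that for every cube $Q$ and every $\alpha\le \alpha_0$,
$$
\mu(R_\alpha)\;\le\;\theta\,\mu(R_{2\alpha}),
$$
where $R_\alpha\subset Q$ denotes the ring of relative thickness $\alpha$. Granting this inequality, I would iterate from $\alpha_0$ down to $\alpha$ in halving steps: if $k$ is the integer with $\alpha_0 2^{-(k+1)}<\alpha\le \alpha_0 2^{-k}$, then $\mu(R_\alpha)\le \mu(R_{\alpha_0 2^{-k}})\le \theta^k\mu(Q)$, which gives $\mu(R_\alpha)\lesssim \alpha^\sigma\mu(Q)$ with $\sigma=\log_2(1/\theta)>0$. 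The range $\alpha>\alpha_0$ is absorbed into the multiplicative constant by the trivial bound $\mu(R_\alpha)\le \mu(Q)$.

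To establish the one-step inequality, the idea is to tile $R_\alpha$ by essentially disjoint cubes $\{Q_i'\}$ of side length comparable to $\alpha\ell(Q)$ (there are $\approx \alpha^{1-n}$ of them), and assign to each boundary tile $Q_i'$ an interior ``partner'' cube $Q_i''$ of the same size lying in the adjacent inner layer $R_{2\alpha}\setminus R_\alpha$, chosen so that $Q_i'$ and $Q_i''$ both sit inside a common auxiliary cube $\widetilde Q_i\subset Q$ of side length $O(\alpha\ell(Q))$. Doubling applied at the scale of $\widetilde Q_i$ then gives $\mu(Q_i')\lesssim \mu(Q_i'')$. Provided the partner assignment has bounded multiplicity, summing over $i$ yields
$$
\mu(R_\alpha)\;\lesssim\;\mu(R_{2\alpha}\setminus R_\alpha)\;=\;\mu(R_{2\alpha})-\mu(R_\alpha),
$$
which rearranges to the desired contraction with $\theta$ depending only on the doubling constant.

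The main geometric obstacle is the partner assignment near edges and corners of $Q$, where a boundary tile abuts several faces of $Q$ simultaneously. For a tile adjacent to a single face, a straight inward translation by one tile-width works uniformly; near a higher-codimensional corner one must translate along a suitable diagonal (a combination of the relevant inward normals) to ensure the partner lands in the inner layer while keeping the multiplicity controlled. This is where the hypothesis $n\ge 2$ is essential: it guarantees the existence of a transverse inward direction in which to shift. In the excluded case $n=1$, the ``ring'' is a disjoint pair of intervals flush with the endpoints of $Q$, no inward partner exists, the inequality fails, and this is the very geometric reason that no nonempty set is removable in $\mathbb R^1$ (Lemma~\ref{n=1-nonremov}).
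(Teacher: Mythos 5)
Your argument is correct, but it is a genuinely different route from the paper: the paper does not prove this lemma at all, it simply invokes \cite{Buckley99} (Corollary~2.2 there, applied to $\mathbb R^n$ with the supremum norm, so that balls become cubes). What you give instead is a self-contained, cube-specific proof: the one-step comparison $\mu(R_\alpha)\lesssim\mu(R_{2\alpha}\setminus R_\alpha)$ via a tiling of the ring by cubes of side $\approx\alpha\ell(Q)$ with inner partners of comparable size and bounded multiplicity (doubling then gives $\mu(Q_i')\lesssim\mu(Q_i'')$), the rearrangement $\mu(R_\alpha)\le\frac{C}{1+C}\mu(R_{2\alpha})$, and dyadic iteration yielding the exponent $\sigma=\log_2(1/\theta)$ (truncated at $1$ if necessary, and with $\alpha>\alpha_0$ absorbed into the constant). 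The tiling-plus-doubling step is exactly the kind of estimate the paper already uses to derive its weaker bound \eqref{apeq1}, so your proof fits the paper's toolkit; what the citation buys is brevity and the general length-space statement, while your argument buys transparency and explicit dependence of $\sigma$ on the doubling constant only. The corner/edge issue you flag is real but handled correctly by the diagonal shift, and the remaining details (grid alignment, half-open tiles, multiplicity $\lesssim 2^n$) are routine.

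One caveat: your closing claim about $n=1$ is wrong, although it does not affect the lemma as stated. In $\mathbb R^1$ the two end intervals of the ring do have inward partners (the adjacent interior intervals of the same length), your own comparison argument runs verbatim, and Buckley's theorem applies to any length space, $\mathbb R^1$ included; so the annular decay property does \emph{not} fail for $n=1$. The paper's reason for excluding $n=1$ is different: it is the extension Lemma~\ref{lem:half-Q-ring} that fails there, and non-removability in $\mathbb R^1$ (Lemma~\ref{n=1-nonremov}) is proved by a direct jump-function argument, not via any failure of annular decay.
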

To obtain this, one applies \cite[Corollary~2.2]{Buckley99} to $\mathbb R^n$ equipped with the infinity norm. Estimates of this type are very important in analysis and potential theory \cite{Buckley99,bj-bj-leh} and have earned a standard terminology:
\begin{definition}[annular decay property]\label{def:AD}
    We say a doubling measure $\mu$ on $\mathbb R^n$ satisfies the $\sigma$-AD property, where $\sigma > 0$,  if for any cube $Q$ and its ring $R$ of relative thickness $\alpha$, 
    \begin{equation}
    \label{apeq1prime}    
    \mu(R) \lesssim \alpha^{\sigma}\mu(Q).
    \end{equation}
\end{definition}
Clearly, one can always decrease $\sigma$, so we are often interested in the maximal such $\sigma$. Due to \eqref{apeq1} we can, and will, always assume that 
\begin{equation}\label{sigma-big}
    \delta+1-n \le \sigma.
\end{equation}
On the other hand, it is true even in the metric setting \cite{bj-bj-leh} that $\sigma \le 1$. Therefore, $1$-AD property is often called \emph{the strong annular decay property}.

To have a reference point for the parameters, if $\mu$ is the Lebesgue measure, or more generally, if $d\mu=wdx$ where $0<C\le w(x) \le C'<\infty$ for constants $C, C'$, then $\delta = n$ and $\sigma =1$.

We are ready to state our first sufficient condition for porosity.
\begin{proposition}
\label{equivalent-porosity}
 Suppose $d\mu=wdx$ for a $p$-admissible weight $w$, $p\ge 1$, and that $\mu$ satisfies the annular decay property with $0<\sigma\le 1$. Let $E\subset \mathbb R^n$ be a compact $\mu$-null set and $\mathcal Q_k, k=1,2,\cdots$ be a sequence of coverings of $E$ as in the definition of porosity. If for some $s>p$
\begin{equation}
    \label{eq:equiv-poros}
 \sum_k \Bigl(\sum_{\mathcal Q_k}\alpha_{\cube}^{\frac{-sc_1}{s-1}}(\alpha_{\cube}^{\sigma} \mu(Q))^{\frac{s-p}{(s-1)p}}\Bigr)^{1-s}= \infty,
\end{equation}
then $E$ is $(s,p)$-porous.
\end{proposition}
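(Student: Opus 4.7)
The plan is a direct comparison argument, reducing the porosity condition \eqref{adv3-prime} to the hypothesis \eqref{eq:equiv-poros} by substituting the annular decay estimate $\mu(R)\lesssim \alpha_{\cube}^{\sigma}\mu(Q)$ into the inner sum and tracking the direction of inequality through each operation.

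First I would observe that since $s>p\ge 1$, the exponent $\frac{s-p}{(s-1)p}$ is strictly positive, so raising both sides of the annular decay bound to that power preserves the direction:
\begin{equation*}
\mu(R)^{\frac{s-p}{(s-1)p}} \;\lesssim\; \bigl(\alpha_{\cube}^{\sigma}\mu(Q)\bigr)^{\frac{s-p}{(s-1)p}}.
\end{equation*}
Multiplying by the (positive) factor $\alpha_{\cube}^{-sc_1/(s-1)}$ and summing over each finite collection $\mathcal Q_k$ yields
\begin{equation*}
\sum_{\mathcal Q_k}\alpha_{\cube}^{\frac{-sc_1}{s-1}}\mu(R)^{\frac{s-p}{(s-1)p}}
\;\lesssim\;
\sum_{\mathcal Q_k}\alpha_{\cube}^{\frac{-sc_1}{s-1}}\bigl(\alpha_{\cube}^{\sigma}\mu(Q)\bigr)^{\frac{s-p}{(s-1)p}},
\end{equation*}
with a constant that depends only on $n$, $p$, $s$, the doubling constant, and the annular decay constant --- in particular, independent of $k$ and of the individual cubes.

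Next I would raise both sides to the exponent $1-s$. Since $s>1$, this exponent is negative, so the inequality reverses:
\begin{equation*}
\Bigl(\sum_{\mathcal Q_k}\alpha_{\cube}^{\frac{-sc_1}{s-1}}\mu(R)^{\frac{s-p}{(s-1)p}}\Bigr)^{1-s}
\;\gtrsim\;
\Bigl(\sum_{\mathcal Q_k}\alpha_{\cube}^{\frac{-sc_1}{s-1}}\bigl(\alpha_{\cube}^{\sigma}\mu(Q)\bigr)^{\frac{s-p}{(s-1)p}}\Bigr)^{1-s},
\end{equation*}
with a uniform comparability constant. Summing over $k\in\mathbb{N}$ and invoking \eqref{eq:equiv-poros} forces the left-hand sum to diverge as well, which is precisely \eqref{adv3-prime}. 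Combined with the standing hypotheses on the coverings $\mathcal Q_k$ (nested containment and $\mu(\cup \mathcal Q_k)\to 0$, already built into the choice of $\{\mathcal Q_k\}$), this shows $E$ is $(s,p)$-porous.

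There is no serious obstacle: the only thing to be careful about is the sign-tracking, and in particular that the AD inequality goes the ``right way'' --- namely, it gives an upper bound on $\mu(R)$ in terms of $\mu(Q)$, which upon inversion via the negative exponent $1-s$ produces a lower bound on the inner sums, exactly what is needed to propagate divergence from the hypothesis to the conclusion.
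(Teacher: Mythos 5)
Your proposal is correct and follows essentially the same route as the paper's own proof: substitute the annular decay bound $\mu(R)\lesssim\alpha_{\cube}^{\sigma}\mu(Q)$ into the inner sum, and use that the outer exponent $1-s<0$ reverses the inequality, so divergence of \eqref{eq:equiv-poros} forces divergence of \eqref{adv3-prime}. Your version merely spells out the sign-tracking and uniformity of constants in more detail than the paper does.
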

\begin{proof}
Due to $1-s<0$, from \eqref{apeq1prime} we have
$$
\sum_k \Bigl(\sum_{\mathcal Q_k}\alpha_{\cube}^{\frac{-sc_1}{s-1}}\mu(R)^{\frac{s-p}{(s-1)p}}\Bigr)^{1-s} \gtrsim  \sum_k \Bigl(\sum_{\mathcal Q_k}\alpha_{\cube}^{\frac{-sc_1}{s-1}}(\alpha_{\cube}^{\sigma} \mu(Q))^{\frac{s-p}{(s-1)p}}\Bigr)^{1-s}.
$$
Thus, \eqref{eq:equiv-poros} implies that $E$ is $(s,p)$-porous.
\end{proof}
Let $0< \delta \le n$ be the constant that satisfies \eqref{apeq1-hei}, and suppose $\mu$ has the $\sigma$-AD property, i.e.\ \eqref{apeq1prime}. Let $Q_0$ be a fixed cube that contains all of the cubes in the coverings of $E$ for porosity. Then, for each cube in the covering,
\begin{equation}\label{mey4eqfixed}
    \mu(R) \lesssim \alpha_{\cube}^{\sigma}\mu(Q) \lesssim \alpha_{\cube}^{\sigma} (\ell(Q))^\delta, \quad \mu(R) \lesssim \alpha_{\cube}^{\sigma-\delta}(\ell(R))^\delta,
\end{equation}
If we used \eqref{apeq1} instead of $\sigma$-AD we would obtain
\begin{equation}\label{mey4eq-better}
\mu(R) \lesssim \alpha_{\cube}^{\delta+1-n} (\ell(Q))^\delta, \quad \mu(R) \lesssim \alpha^{1-n}(\ell(R))^\delta.
\end{equation}
But recall that we assume, without loss of generality, that $\delta+1-n \le \sigma$, which implies $1-n \le \sigma-\delta$. Thus, \eqref{mey4eqfixed} are always sharper upper bounds than those in \eqref{mey4eq-better}.

Now we can state a sufficient condition for porosity only in terms of lengths:
\begin{proposition}
\label{equivalent-two-porosity}
 Suppose $d\mu=wdx$ for a $p$-admissible weight $w$, $p \ge 1$, and that $\mu$ satisfies \eqref{apeq1-hei} (with $\delta$) and the annular decay property with $0<\sigma \le 1$. Let $E\subset \mathbb R^n$ be a compact $\mu$-null set. Let $\mathcal Q_k, k=1,2,\cdots$ be a sequence of coverings of $E$ as in the definition of porosity. If for some $s>p$
\begin{equation}
    \label{eq:equiv-poros-two}
 \sum_k \Bigl(\sum_{\mathcal Q_k}\alpha_{\cube}^{\frac{-sc_1}{s-1}}(\alpha_{\cube}^{\sigma} (\ell(Q))^\delta)^{\frac{s-p}{(s-1)p}}\Bigr)^{1-s} = \infty,
\end{equation}
then $E$ is $(s,p)$-porous.
\end{proposition}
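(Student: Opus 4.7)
The plan is to reduce the claim to Proposition~\ref{equivalent-porosity} (or equivalently, directly to the definition of $(s,p)$-porosity) by a single chain of pointwise estimates. Everything has been set up already: the properties of $\mathcal{Q}_k$ coming from the definition of porosity (covering, nesting of cubes across levels, and $\mu(\cup\mathcal{Q}_k)\to 0$) are assumed; the only thing left to verify is the divergence condition~\eqref{adv3-prime} for the $\mu(R)$-sum.

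First, I would invoke the already recorded inequality~\eqref{mey4eqfixed}, namely $\mu(R)\lesssim \alpha_{\cube}^{\sigma}(\ell(Q))^{\delta}$, which is exactly where the $\sigma$-AD property combines with \eqref{apeq1-hei}. Since $s>p\ge 1$, the exponent $\tfrac{s-p}{(s-1)p}$ is strictly positive, so raising both sides preserves the inequality:
\begin{equation*}
\mu(R)^{\frac{s-p}{(s-1)p}} \lesssim \bigl(\alpha_{\cube}^{\sigma}(\ell(Q))^{\delta}\bigr)^{\frac{s-p}{(s-1)p}}.
\end{equation*}
Multiplying by $\alpha_{\cube}^{-sc_{1}/(s-1)}$ and summing over $Q\in\mathcal{Q}_{k}$ yields
\begin{equation*}
\sum_{\mathcal{Q}_{k}} \alpha_{\cube}^{\frac{-sc_{1}}{s-1}}\mu(R)^{\frac{s-p}{(s-1)p}} \;\lesssim\; \sum_{\mathcal{Q}_{k}} \alpha_{\cube}^{\frac{-sc_{1}}{s-1}}\bigl(\alpha_{\cube}^{\sigma}(\ell(Q))^{\delta}\bigr)^{\frac{s-p}{(s-1)p}}.
\end{equation*}

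Now comes the sign-flip step, which is the only place one must be careful: because $1-s<0$, raising both sides of the above to the power $1-s$ reverses the inequality, giving
\begin{equation*}
\Bigl(\sum_{\mathcal{Q}_{k}}\alpha_{\cube}^{\frac{-sc_{1}}{s-1}}\mu(R)^{\frac{s-p}{(s-1)p}}\Bigr)^{1-s} \;\gtrsim\; \Bigl(\sum_{\mathcal{Q}_{k}} \alpha_{\cube}^{\frac{-sc_{1}}{s-1}}\bigl(\alpha_{\cube}^{\sigma}(\ell(Q))^{\delta}\bigr)^{\frac{s-p}{(s-1)p}}\Bigr)^{1-s}.
\end{equation*}
Summing over $k$ and applying the standing hypothesis \eqref{eq:equiv-poros-two} then forces the left-hand side to diverge, which is exactly~\eqref{adv3-prime}. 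Hence $E$ is $(s,p)$-porous.

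No step here is hard; the only subtlety is keeping track of signs of exponents so that the inequalities compose in the correct direction (positive exponent $\tfrac{s-p}{(s-1)p}$ preserves, negative exponent $1-s$ reverses). Alternatively, one can simply cite Proposition~\ref{equivalent-porosity}: its hypothesis \eqref{eq:equiv-poros} follows from \eqref{eq:equiv-poros-two} by the same two manipulations, first bounding $\mu(Q)\lesssim (\ell(Q))^{\delta}$ via \eqref{apeq1-hei} with $Q'=Q$ and a fixed ambient cube $Q_{0}\supset Q$, then using the positivity of $\tfrac{s-p}{(s-1)p}$ and the negativity of $1-s$.
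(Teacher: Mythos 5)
Your proof is correct and follows essentially the same route as the paper: the paper omits the proof as ``similar to the previous proposition,'' and that omitted argument is exactly your chain, namely applying \eqref{mey4eqfixed} (the $\sigma$-AD property combined with \eqref{apeq1-hei} relative to a fixed ambient cube containing all covering cubes), using the positivity of $\tfrac{s-p}{(s-1)p}$, and reversing the inequality under the negative exponent $1-s$ to deduce \eqref{adv3-prime}. Your bookkeeping of the exponent signs and of the remaining porosity conditions (covering, nesting, $\mu(\cup\mathcal Q_k)\to 0$) is exactly what is needed, so nothing is missing.
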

The proof is similar to that of the previous proposition, so we omit it. Notice that \eqref{eq:equiv-poros-two} also gives an equivalent porosity condition in term of $\ell(R)$ because:
$$
\sum_k \Bigl(\sum_{\mathcal Q_k}\alpha_{\cube}^{\frac{-sc_1}{s-1}}(\alpha_{\cube}^{\sigma} (\ell(Q))^\delta)^{\frac{s-p}{(s-1)p}}\Bigr)^{1-s}=\sum_k \Bigl(\sum_{\mathcal Q_k}\alpha_{\cube}^{\frac{-sc_1}{s-1}}(\alpha^{\sigma-\delta}(\ell(R))^\delta)^{\frac{s-p}{(s-1)p}}\Bigr)^{1-s}.
$$
\section{Examples}\label{sec:examples}
Throughout Section~\ref{sec:examples}, $n=2$ and $d\mu=wdx$ for a $p$-admissible weight $w$, $p\ge 1$, on $\mathbb R^2$.

\subsection{Fat Cantor sets on the \texorpdfstring{$x$}{x}-axis}\label{sec:hyper-cantor-E}
We prove removability for $H^{1,s}(\mathbb R^2;\mu)$ of a class of Cantor sets  lying on the $x$-axis of positive Hausdorff-$1$ measure. We do so by showing  that the Cantor set that we construct is $(s,p)$-porous, hence removable by Theorem~\ref{jy1}.

The construction of $E \subset [0,1]\times \{0\}$ is rather standard~\cite{koskela:hyperplane}. The positive constants $\eta <1$ and $\tau < 1/2$ will be determined below. We begin by deleting an open interval of length $\eta \tau$ from the middle of $[0,1]$. We are left with two \textit{surviving} intervals. This constitutes the stage $k=1$ of the construction. Assuming we have done the construction up to stage $k$, and are left with $2^k$ {surviving} intervals $I_1,\cdots,I_{2^k}$, we proceed to stage $k+1$ by removing an open interval of length \(\eta \tau^{k+1}\) from the middle of each of $I_j, j=1,\cdots,2^k$.

We require that
\begin{equation}\label{eq:eta-tau}
    \eta\tau + 2\eta\tau^2+\cdots = \eta\tau \sum_{i=0}^\infty (2\tau)^i = \eta\tau(1-2\tau)^{-1} < 1.
\end{equation}
Notice that the sum of the lengths of all of the removed intervals from stage $1$ up to stage $k$ is
$\eta\tau + 2\eta\tau^2+\cdots + \eta2^{k-1}\tau^k$, which is strictly less than $1$ by~\ref{eq:eta-tau}. Hence, the construction can be repeated ad infinitum. Define the Cantor set $E$ by
$$
E:=\bigcap_{k=1}^\infty\bigcup_{j=1}^{2^k}I_j.
$$
By condition~\ref{eq:eta-tau}, $E$ will be of positive Lebesgue measure.

We wish to show that there exists a nonempty set of parameters $\eta, \tau, p\ge 1,$ and $s>p$ such that $E$ is $(s,p)$-porous as a subset of $\mathbb R^2$. Denote by $\ell_k$ the common length of the surviving intervals $I_j$ at stage $k$. Using the expression above for the sum of the lengths of all removed intervals up to stage $k$, we have
\[   
{2^k} \ell_k + \eta \tau (1 + 2\tau+\cdots + (2\tau)^{k-1}) =1.
\]
From which, we find
\begin{equation}\label{eq:length-Ij}
    \ell_k = \frac{1-\eta\tau(1-2\tau)^{-1}(1-(2\tau)^k)}{2^k}.
\end{equation}
Fix $k \in \mathbb N$, and let $I_1,\cdots,I_{2^k}$ be the surviving intervals in the construction of $E$. Adjacent to either side of every $I_j$ there are open intervals of length $\eta\tau^{k}/3$ that do not intersect $E$. We consider a cube (=square) $Q_j$ in $\mathbb R^2$ whose horizontal middle segment is the union of $I_j$ and the two intervals to its sides. Clearly, $Q_j$ has a ring $R_j$ of length $\eta\tau^{k}/2$ that is away from $E$. Moreover, $\mathcal Q_k:=\{Q_1,\cdots,Q_{2^k}\}$ are (congruent) pairwise disjoint cubes that cover $E$. (See the $x$-axis in Figure~\ref{fig:product}.)

Let $\alpha_k$ denote the common value of $\alpha_{\cube}$ for all $Q \in \mathcal Q_k$. From \eqref{eq:length-Ij}
\begin{equation}\label{eq:asympt-alpha}
  \lim_{k\to \infty} \frac{\alpha_k}{(2\tau)^k} \quad \text{is a constant.}  
\end{equation}
Now, let $d\mu=wdx$ on $\mathbb R^2$ satisfy \eqref{apeq1-hei} (with $\delta$) and the annular decay property $\sigma$. Recall that, without loss of generality, we assume $\delta-1 \le \sigma \le 1$. From~\eqref{mey4eqfixed} we have
\begin{equation}\label{mey4eq}
    \mu(R_j) \lesssim \alpha_{\cube}^{\sigma-\delta}(\ell(R_j))^\delta \lesssim \alpha_k^{\sigma-\delta}\tau^{k\delta},
\end{equation}
for all $k$ and all $Q_j \in \mathcal Q_k$. The constant $\eta$ is absorbed into the correction factor. It would not play any role in the porosity condition even if we kept track of it.

Let us compute to see if we can find $s>p\ge 1$ for which $E$ will be $(s,p)$-porous, using the cubes described above. It will suffice to achieve 
\begin{equation}\label{divex}
\sum_{k}\Bigl(\sum_{Q\in\mathcal Q_k}
\alpha_{\cube}^{\frac{-sc_1}{s-1}}
\;\mu(R)^{\frac{s-p}{(s-1)p}}\Bigr)^{1-s}
=\infty.
\end{equation}
Recall that $c_1=\log_2 C\ge 1$ is another constant that depends on the weight $w$; it is equal to $\log_2 C_1$, where $C_1$ is identified in Theorem~\ref{extsob}.

By~\eqref{mey4eq}, in order to achieve the divergence \eqref{divex} it is sufficient to have
\begin{equation}\label{divex2}
\sum_{k}\Bigl(\sum_{j=1}^{2^{k}}
\alpha_{k}^{\frac{-sc_1}{s-1}}
\bigl(\alpha_{k}^{\sigma-\delta}\tau^{k\delta}\bigr)^{\frac{s-p}{(s-1)p}}
\Bigr)^{1-s} =\infty.
\end{equation}
In turn, by \eqref{eq:asympt-alpha}, it is enough to have the divergence
\begin{equation}\label{divex3}
\sum_{k}\Bigl(2^k
(2\tau)^{\frac{(-sc_1p+(\sigma-\delta)(s-p))k}{(s-1)p}}\,
\tau^{\frac{k\delta(s-p)}{(s-1)p}}\Bigr)^{1-s}=\infty.
\end{equation}
This is equivalent to
\begin{equation}\label{divex33}
\sum_{k}\Bigl(2^{1-s}(2\tau)^{\frac{sc_1p+(\delta-\sigma)(s-p)}{p}}\,
\tau^{\frac{-\delta(s-p)}{p}}\Bigr)^{k}=\infty.
\end{equation}
This is a geometric sum, thus, to have divergence, we must have
\begin{equation}\label{mey1eq}
    2^{1-s}(2\tau)^{{sc_1}+{(\delta-\sigma)(s-p)}/{p}}\,\tau^{{-\delta(s-p)}/{p}} \ge 1
\end{equation}
For simplicity, write $\tau = 2^{-\upsilon}$. Remember that we must have $\upsilon >1$ so that $\tau <1/2$. Then, condition \eqref{mey1eq} becomes equivalent to
\begin{equation*}\label{mey2eq}
    {(1-s)}+ (1-\upsilon)\Bigl({{sc_1}+\frac{(\delta-\sigma)(s-p)}{p}}\Bigr) + \frac{\upsilon\delta(s-p)}{p} \ge 0.
\end{equation*}

Therefore, we have proved the following:
\begin{proposition}\label{lem:hyper-porous}
Let $E \subset [0,1]\times \{0\}$ be the Cantor set constructed above with the parameters $\tau = 2^{-\upsilon}$, $\upsilon >1$, and $\eta <1$.\footnote{The parameter $\eta<1$ in the construction is irrelevant to the porosity and removability, and the requirement $\eta\tau(1-2\tau)^{-1} < 1$ for the construction can be met for every $\tau$ by choosing $\eta$ smaller.} If for some $s>p$,
    \begin{equation}\label{mey5eq}
    {(1-s)}+ (1-\upsilon)\Bigl({{sc_1}+\frac{(\delta-\sigma)(s-p)}{p}}\Bigr) + \frac{\upsilon\delta(s-p)}{p} \ge 0,
    \end{equation}
then $E$ is $(s,p)$-porous in $\mathbb R^2$, hence, removable for $H^{1,s}(\mathbb R^2;\mu)$. The set of parameters satisfying \eqref{mey5eq} is nonempty whenever $\frac{\delta}{p}>1$.
\end{proposition}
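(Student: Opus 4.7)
The first claim of the proposition---that \eqref{mey5eq} implies $(s,p)$-porosity of $E$---follows from the computation already carried out before the statement. The cubes $\mathcal Q_k$ covering $E$ have a common relative thickness $\alpha_k$ with $\alpha_k \approx (2\tau)^k$, and substituting the bound $\mu(R) \lesssim \alpha_k^{\sigma-\delta}\tau^{k\delta}$ from \eqref{mey4eq} into the porosity criterion \eqref{divex} reduces the verification to divergence of the geometric series \eqref{divex33}. That divergence is exactly \eqref{mey1eq}, and upon writing $\tau = 2^{-\upsilon}$ it takes the form \eqref{mey5eq}. Removability for $H^{1,s}(\mathbb R^2;\mu)$ is then supplied by Theorem~\ref{jy1}.

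It remains to exhibit, under the hypothesis $\delta > p$, a pair $(s,\upsilon)$ with $s > p$ and $\upsilon > 1$ satisfying \eqref{mey5eq}. Let $F(s,\upsilon)$ denote the left-hand side of \eqref{mey5eq}. Regrouping in $\upsilon$,
\[
F(s,\upsilon) = (1-s) + sc_1 + \frac{(\delta-\sigma)(s-p)}{p} - \upsilon\Bigl(sc_1 - \frac{\sigma(s-p)}{p}\Bigr).
\]
Because $c_1 \ge 1$, $0 < \sigma \le 1$, $p \ge 1$, and $s > p$, the coefficient of $-\upsilon$ is strictly positive, so $F(s,\cdot)$ is strictly decreasing on $[1,\infty)$. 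The boundary value simplifies to
\[
F(s,1) = (1-s) + \frac{\delta(s-p)}{p},
\]
which is strictly positive if and only if $s(\delta-p) > p(\delta-1)$.

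The plan is then transparent: under $\delta > p$ the inequality $s(\delta-p) > p(\delta-1)$ is equivalent to $s > p(\delta-1)/(\delta-p)$, a bound which equals $p$ when $p = 1$ and is a finite number when $p > 1$, so in either case it is met by some $s > p$. Fix such an $s$; continuity of $F(s,\cdot)$ then furnishes $\upsilon > 1$ sufficiently close to $1$ for which $F(s,\upsilon) \ge 0$. No step presents a genuine obstacle: the proof reduces to a monotonicity observation in $\upsilon$, combined with the fortunate cancellation at $\upsilon = 1$ that collapses the condition to one capturing exactly the hypothesis $\delta/p > 1$.
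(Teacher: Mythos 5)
Your proposal is correct and follows essentially the same route as the paper: the $(s,p)$-porosity claim is exactly the computation preceding the statement (reduction to the geometric series \eqref{divex33}, i.e.\ \eqref{mey1eq}, rewritten with $\tau=2^{-\upsilon}$), and your nonemptiness argument is a more explicit version of the paper's remark, evaluating the left-hand side at $\upsilon=1$, where it collapses to $(1-s)+\delta(s-p)/p>0$ for suitable $s>p$ when $\delta>p$, and then perturbing $\upsilon$ slightly above $1$ by continuity (your monotonicity-in-$\upsilon$ observation and the explicit threshold $s>p(\delta-1)/(\delta-p)$ are correct refinements, not a different method).
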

The set of parameters satisfying \eqref{mey5eq} is nonempty when $1\le p < \delta$, because, for example, by choosing $\upsilon$ close to $1$ (note that this means a smaller set $E$) and $s$ large enough, the middle-term is almost zero, and the positive last term over-compensates for the negative first term.
\begin{remark}\label{rem:restrictive}
    The restriction ${\delta}/{p}>1$, although covering many interesting weights, is actually quite restrictive. For example, the rather nice weight $w(x)=|x|^{-1}$ has $\delta = 1$ and so, ${\delta}/{p}>1$ will fail. One possible remedy is to let $\delta=\delta_{\cube}$ depend on $Q$, which is possible because all that one needs is the estimate~\eqref{mey4eq}. Then, if $\delta_{\cube}$ is large for ``most'' cubes in the covering, as would be with $w(x)=|x|^{-1}$, we may still be able to prove applicable sufficient conditions for porosity.
\end{remark}
\subsection{Product sets} We prove that under rather mild conditions on the Cantor set $F \subset \mathbb R^1$, the product set $E \times F$ is porous (hence removable), where $E$ is the Cantor set of the previous subsection constructed with parameters $\tau=2^{-\upsilon}<1/2$ and $\eta <1$, satisfying $\eta\tau(1-2\tau)^{-1} < 1$.

Let us now describe the requirements we will need on $F$. Note that these are sufficient and not necessary conditions for porosity of $E\times F$. Recall that at stage $k=1,2,\cdots$, there are $2^k$ many non-intersecting closed intervals $I_j', j=1,\cdots,2^k$ of length $\ell_k +(2/3)\eta\tau^k$ that cover $E$. The precise value of $\ell_k$ is given in \eqref{eq:length-Ij} but it will not be crucial here.
\begin{figure}
\begin{center}
\begin{tikzpicture}[scale=2]

\def\xshift{0.02}
\def\yshift{0.05}

\draw[-, blue] (\xshift,0) -- (\xshift,2.75) node[above left, blue] {};
\draw[-, blue] (0,\yshift) -- (2.75,\yshift) node[below right, blue] {};

\node[left] at (\xshift,2.57) {1};
\node[below] at (2.55,\yshift-.02) {1};
\node[below] at (-0.02,\yshift-.02 ) {0};

\foreach \x in {0,2,5,7, 15, 17, 20,22} {
    \foreach \y in {0,2,20,22} {
        \draw [pattern=north east lines,
        pattern color=black]
            ({\x/9}, {\y/9}) 
            rectangle ({(\x+1)/9}, {(\y+1)/9});
    }
}
\end{tikzpicture}
\end{center}
\caption{There are $2^k$ intervals covering $E$. But only $\lambda^k$-many intervals, $\lambda<2$,  of the same length are needed to cover $F$. In this picture, $k=2$, $\lambda=\sqrt{2}$.}
    \label{fig:product}
\end{figure}
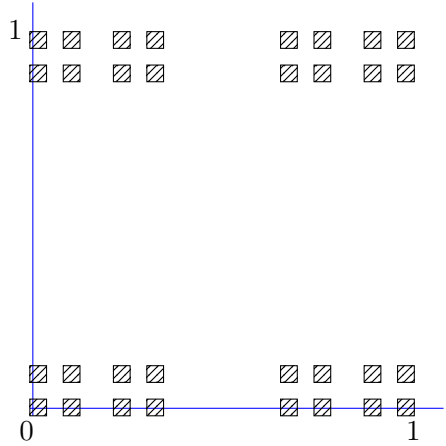

We require that for each $k=1,2,\cdots$ there be a covering of $F$ by (fewer than) $C\lambda^k$-many closed intervals of length $\ell_k$, and that their mutual distance is at least $(2/3)\eta\tau^k$; where $C$ is a constant. Whether $E\times F$ is porous will depend on the constant $\lambda$. (We do not require $F$ to be contained in the unit interval, or any other particular interval.)

Similar to the analysis of the previous section, for each $k=1,2,\cdots$ we cover $E\times F$ by (fewer than) $2^k\lambda^k$-many pairwise disjoint congruent cubes of side length $\ell_k +(2/3)\eta\tau^k$ such that each has a ring of relative thickness asymptotically comparable to $(2\tau)^k$ that are away from $E\times F$.

To test the $(s,p)$-porosity condition for $E\times F$ we run very similar calculations to \eqref{divex}-\eqref{mey1eq}. All that changes is the number of the cubes in \eqref{divex3}. Hence, we end up with the following: The set $E\times F$ is $(s,p)$-porous if
\begin{equation*}\label{eq:ExF-porous1}
    (2\lambda)^{(1-s)}(2\tau)^{{sc_1}+{(\delta-\sigma)(s-p)}/{p}}\,\tau^{{-\delta(s-p)}/{p}} \ge 1.
\end{equation*}
For simplicity, let us write $\lambda = 2^{\omega}$. Therefore, the set $E\times F$ is $(s,p)$-porous if
\begin{equation}\label{eq:ExF-porous2}    
    (1+\omega)(1-s) + (1-\upsilon)\Bigl({{sc_1}+\frac{(\delta-\sigma)(s-p)}{p}}\Bigr) + \frac{\upsilon\delta(s-p)}{p} \ge 0.
\end{equation}
Thus, we have proved the following.
\begin{theorem}\label{thrm:product-porous}
Let $E \subset [0,1]\times \{0\}$ be the Cantor set constructed in Subsection~\ref{sec:hyper-cantor-E} with parameters $\tau = 2^{-\upsilon}$, $\upsilon >1$, and $\eta<1$.  Let $F \subset \mathbb R^1$ be a Cantor set such that for each $k=1,2,\cdots$ there is a covering of $F$ by $C(2^\omega)^k$-many (or fewer) closed intervals of length $\ell_k$, with $\ell_k$ given in \eqref{eq:length-Ij}, and that their mutual distance is at least $(2/3)\eta\tau^k$. If for some $s>p$,
    \begin{equation}\label{mey5eqp}
    (1+\omega)(1-s) + (1-\upsilon)\Bigl({{sc_1}+\frac{(\delta-\sigma)(s-p)}{p}}\Bigr) + \frac{\upsilon\delta(s-p)}{p} \ge 0,
    \end{equation}
then $E\times F$ is $(s,p)$-porous, hence, removable for $H^{1,s}(\mathbb R^2;\mu)$. The set of parameters satisfying \eqref{mey5eqp} is nonempty whenever $\frac{\delta}{p}>1+\omega$.
\end{theorem}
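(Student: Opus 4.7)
The plan is to apply Proposition~\ref{equivalent-two-porosity} to explicit coverings of $E\times F$ by cubes, following almost verbatim the computation that led to \eqref{mey1eq} in Subsection~\ref{sec:hyper-cantor-E}. First, at each stage $k$ the Cantor set $E$ is covered by the $2^k$ intervals $I_j'$ of length $\ell_k+(2/3)\eta\tau^k$ (obtained from the surviving $I_j$ by padding with gaps of length $\eta\tau^k/3$ on each side). By hypothesis, $F$ admits a covering by at most $C\cdot 2^{\omega k}$ closed intervals of length $\ell_k$ whose mutual distance is at least $(2/3)\eta\tau^k$; padding each symmetrically by $\eta\tau^k/3$ yields intervals $J_i'$ of length $\ell_k+(2/3)\eta\tau^k$ that remain pairwise disjoint. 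The products $Q_{j,i}:=I_j'\times J_i'$ form a collection $\mathcal Q_k$ of pairwise disjoint cubes of side length $\ell_k+(2/3)\eta\tau^k$, of cardinality at most $2C\cdot 2^{(1+\omega)k}$, covering $E\times F$, and each $Q_{j,i}$ has a ring of absolute thickness $\eta\tau^k/3$ that is disjoint from $E\times F$.

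Second, all cubes in $\mathcal Q_k$ are congruent with $\ell(Q)\approx 2^{-k}$ and a common relative thickness $\alpha_k$ satisfying $\alpha_k\approx (2\tau)^k=2^{(1-\upsilon)k}$ by \eqref{eq:asympt-alpha}. Substitute these asymptotics into the sufficient porosity criterion~\eqref{eq:equiv-poros-two}: up to a multiplicative constant the inner sum at level $k$ equals
\[
\#\mathcal Q_k \cdot \alpha_k^{-sc_1/(s-1)}\bigl(\alpha_k^{\sigma}\ell(Q)^\delta\bigr)^{(s-p)/((s-1)p)}\approx 2^{kL},
\]
where
\[
L=(1+\omega)+\tfrac{sc_1(\upsilon-1)}{s-1}+\tfrac{(s-p)[\sigma(1-\upsilon)-\delta]}{(s-1)p}.
\]
Hence the series in \eqref{eq:equiv-poros-two} is (up to constants) the geometric series $\sum_k 2^{(1-s)Lk}$, which diverges precisely when $(1-s)L\ge 0$, and since $s>1$, when $L\le 0$. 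Multiplying $L\le 0$ by $(s-1)>0$ and rearranging (using $(\delta-\sigma)(1-\upsilon)+\upsilon\delta=\delta+\sigma(\upsilon-1)$) reproduces exactly the stated inequality \eqref{mey5eqp}. Therefore $E\times F$ is $(s,p)$-porous, and Theorem~\ref{jy1} gives removability for $H^{1,s}(\mathbb R^2;\mu)$.

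Finally, for the nonemptiness of the parameter set, let $\upsilon\to 1^+$. The middle group $(1-\upsilon)\bigl(sc_1+(\delta-\sigma)(s-p)/p\bigr)$ tends to zero and condition \eqref{mey5eqp} becomes $(1-s)(1+\omega)+\delta(s-p)/p\ge 0$, equivalently $s\bigl(\tfrac{\delta}{p}-(1+\omega)\bigr)\ge \delta-(1+\omega)$. When $\delta/p>1+\omega$ the coefficient of $s$ is strictly positive, so the inequality holds for all sufficiently large $s$; by continuity of \eqref{mey5eqp} in $\upsilon$, the same $s$ works for $\upsilon$ slightly larger than $1$, which is a legitimate choice for the construction of $E$.

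The calculations are a direct adaptation of \eqref{divex}--\eqref{mey1eq}; the only genuine novelty to verify is the combinatorial geometry in the first paragraph, namely the existence of a disjoint product covering with the claimed cardinality and a ring of relative thickness still comparable to $(2\tau)^k$. This is precisely where the spacing hypothesis on the intervals covering $F$ (mutual distance $\ge(2/3)\eta\tau^k$) is used, to guarantee that the padded intervals $J_i'$ remain pairwise disjoint at every scale.
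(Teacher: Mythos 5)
Your proposal is correct and follows essentially the same route as the paper: cover $E\times F$ at stage $k$ by the $\approx 2^{(1+\omega)k}$ congruent product cubes with relative ring thickness $\approx(2\tau)^k$, and rerun the computation of Subsection~\ref{sec:hyper-cantor-E} (equivalently, plug into Proposition~\ref{equivalent-two-porosity}), with the geometric-series exponent $L\le 0$ rearranging exactly to \eqref{mey5eqp}; your limiting argument $\upsilon\to 1^+$, $s$ large for nonemptiness matches the paper's reasoning as well. The only cosmetic point is that when the mutual distance of the $F$-intervals equals $(2/3)\eta\tau^k$ the padded closed intervals may touch at endpoints, which is harmless since the cubes in the porosity definition are open.
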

\section{Proofs of the Extension Lemmas}\label{sec:Proof-of-once-reflect}
The proof of removability result Theorem~\ref{jy1} is based on Lemma~\ref{lem:reflect-one-time} and Lemma~\ref{lem:half-Q-ring}. We give the details of a proof of Lemma~\ref{lem:reflect-one-time}. The idea of the proof goes back to~\cite{Jones81}, which was extended to weighted $\mathbb R^n$ in \cite{chua:extend} and \cite{Chua94}. Proof of Lemma~\ref{lem:half-Q-ring} is via slight modifications of the same construction.

Toward the proof of Lemma~\ref{lem:reflect-one-time}, recall that $n\ge 2$, $\alpha <1/2$ and
$$
2R=\{(x_1,\cdots,x_n): (1-2\alpha)\ell/2 < \max |x_i| < \ell/2 \}.
$$
Define
$$
R'=\{(x_1,\cdots,x_n): {(1-2\alpha)}\ell/2 < \max |x_i| < (1-\alpha)\ell/2 \}. 
$$
We begin with a Whitney-type decomposition result whose proof we omit as it can be done quite directly utilizing the particular geometry of the rings.
\begin{lemma}\label{lem:whitn-25}
There exist constants $\kappa >1$, independent of $\alpha$ and $Q$ and a collection $\{D_j\}_{j=1}^\infty$ of closed cubes in $R'$ with the following (with constants independent of $\alpha$, $Q$, $j$, $j'$, $j_0$ and $x$):
    \begin{enumerate}
        \item[(A1)] $ R'\subset \cup_{j}D_j$, and if $j \ne j'$, $\textup{int}(D_j) \cap \textup{int}(D_{j'}) = \emptyset$.
        \item[(A2)] If $\kappa D_j \cap D_{j_0} \ne \emptyset$, then $ \diam D_j \approx \diam D_{j_0}$.
        \item[(A3)] For every $\eps >0$ and every $x_0 \in \overline{R} \cap \overline{R'}$ there exist $r>0$ such that if $x \in R'$, $|x - x_0|<r$, and $D_j \ni x$, then $\diam D_j< \eps$.
        \item[(A4)] For every $x \in R'$, 
        $$
        \sum_j \chi_{\kappa D_j}(x) \lesssim 1.
        $$
    \end{enumerate}
Moreover, for each $D_j$ there corresponds some cube $D_j^*\subset R$ in such a way that:
    \begin{enumerate}
        \item[(B1)] For every $j,$ $\diam (D_j^*) \approx \diam (D_j) \approx \dist(D_j,D_j^*)$. 
        \item[(B2)] If $\kappa D_j \cap D_{j_0} \neq \emptyset$, then $\diam (D_{j}^*) \approx \diam (D_{j_0}^*)$, and there exists a cube $T_{j,j_0} \subset R$ such that $\diam (T_{j,j_0}) \approx \diam (D_{j_0}^*)$ and $ D_j^* \cup D_{j_0}^* \subset T_{j,j_0}$.
        \item[(B3)] Let
        $$
        J_0:=\{j: \kappa D_j \cap D_{j_0} \neq \emptyset\}.
        $$
        For every $x \in R$, 
        $$
        \sum_{j_0}\sum_{j \in J_0}  \chi_{T_{j,j_0}}(x) \lesssim 1.
        $$
    \end{enumerate}
\end{lemma}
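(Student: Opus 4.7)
The plan is to perform a Whitney-type decomposition of $R'$ with respect to the common face $\Gamma := \overline{R}\cap\overline{R'}$, then construct each reflected cube $D_j^*\subset R$ by reflecting $D_j$ across $\Gamma$.

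First I would construct $\{D_j\}$. Note that $R'$ is a ``square frame'' of thickness $\alpha\ell$ adjacent to $R$, and every point of $R'$ lies within distance $\alpha\ell$ of $\Gamma$. On each face of $R'$ (away from the corners) I partition into dyadic slabs parallel to that face of widths $2^{-k}\alpha\ell$ for $k=0,1,2,\ldots$, and subdivide each slab into closed cubes of side length equal to the slab width. Near the $2^n$-many corners of $R'$ I perform the analogous decomposition on the $2^{n-1}$ natural sub-pieces. After taking closures this yields a countable family of closed cubes $\{D_j\}\subset\overline{R'}$ with pairwise disjoint interiors covering $R'$, and with
\begin{equation*}
\diam(D_j)\;\approx\;\dist(D_j,\Gamma).
\end{equation*}
Properties (A1)--(A4) then follow by standard Whitney-type arguments, provided $\kappa>1$ is chosen close enough to $1$: (A1) is by construction; (A2) holds because in a dyadic Whitney family adjacent cubes differ by at most a factor of $2$ in side length; (A3) is immediate because any $D_j$ meeting a small ball around $x_0\in\Gamma$ satisfies $\diam(D_j)\lesssim\dist(D_j,\Gamma)\le|x-x_0|$; and (A4) is the standard bounded-overlap property.

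For each $D_j$, I define $D_j^*$ by reflecting $D_j$ across its closest face of $\Gamma$ and then translating outward by a fixed moderate multiple of $\diam(D_j)$, chosen so that $D_j^*\subset R$. Since the reflection is an isometry and the translation is by a bounded multiple of $\diam(D_j)$, we obtain (B1): $\diam(D_j^*)\approx\diam(D_j)\approx\dist(D_j,D_j^*)$. For (B2), if $\kappa D_j\cap D_{j_0}\neq\emptyset$ then (A2) forces $\diam(D_j)\approx\diam(D_{j_0})$, hence $\diam(D_j^*)\approx\diam(D_{j_0}^*)$; reflecting and dilating by a bounded factor a cube that encloses $D_j\cup D_{j_0}$ produces $T_{j,j_0}\subset R$ with $\diam(T_{j,j_0})\approx\diam(D_{j_0}^*)$ and $D_j^*\cup D_{j_0}^*\subset T_{j,j_0}$.

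Finally, (B3) follows by combining (A4) with the control obtained in the previous paragraph. Indeed, for each fixed $j_0$ the set $J_0$ has bounded cardinality (also a consequence of (A4)), and every $T_{j,j_0}$ with $j\in J_0$ is a bounded dilation of $D_{j_0}^*$. Since the reflection sending $\{D_j\}$ to $\{D_j^*\}$ is bi-Lipschitz with constants $\approx 1$ on each face-neighborhood, the family $\{D_{j_0}^*\}$ inherits bounded overlap from $\{D_{j_0}\}$, and summing a uniformly bounded number of bounded dilations gives $\sum_{j_0}\sum_{j\in J_0}\chi_{T_{j,j_0}}\lesssim 1$. The only delicate step is the treatment of the corners of $\Gamma$, where the ``nearest face'' is not uniquely defined and naive reflections from different faces could overlap; I handle this by partitioning a neighborhood of each corner into finitely many (depending only on $n$) cones, performing the reflection cone-by-cone, and absorbing the resulting $O_n(1)$ multiplicity into the implicit constants. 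This is expected to be the main technical nuisance, but it contributes no quantitative dependence on $\alpha$ or $Q$, which is what matters for the later application.
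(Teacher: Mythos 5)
The paper itself omits the proof of this lemma (it only remarks that the decomposition ``can be done quite directly utilizing the particular geometry of the rings''), so the comparison is with the intended Jones--Chua--type construction, and your overall architecture is exactly that: a one-sided Whitney decomposition of $R'$ with $\diam D_j\approx\dist(D_j,\Gamma)$, where $\Gamma:=\overline{R}\cap\overline{R'}$, together with reflected cubes $D_j^*\subset R$. For (A1)--(A4) and (B1) your argument is essentially fine, modulo harmless details: the dyadic slabs need not subdivide into an integer number of exact cubes (use boxes of bounded eccentricity or adjust the largest generation), the biggest cubes necessarily touch the inner boundary of $R'$ (so ``cubes in $R'$'' must be read as cubes in $\overline{R'}$, which is how the lemma is used), and the extra outward translation by a multiple of $\diam D_j$ is both unnecessary (the plain reflection already gives $\dist(D_j,D_j^*)=2\dist(D_j,\Gamma)\approx\diam D_j$) and harmful for the largest cubes, which it can push outside $Q$.

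The genuine gap is (B2) near the corners (and, for $n\ge3$, the edges) of $\Gamma$, and your closing remark misdiagnoses it: the difficulty is not overlap or multiplicity, which could indeed be absorbed into constants, but the nonexistence of $T_{j,j_0}\subset R$ under the ``reflect across the nearest face'' rule. Concretely, take $n=2$, $\ell=2$, the corner of $\Gamma$ at $(1-\alpha,1-\alpha)$, $d$ small, and the adjacent Whitney cubes $D_j=[1-\alpha-2d,1-\alpha-d]\times[1-\alpha-3d,1-\alpha-2d]$ and $D_{j_0}$ its mirror image across the diagonal; their nearest faces of $\Gamma$ are different, so your rule sends $D_j^*$ into the leg $\{x>1-\alpha,\ y<1-\alpha\}$ of $R$ and $D_{j_0}^*$ into the leg $\{y>1-\alpha,\ x<1-\alpha\}$. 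Any axis-parallel cube containing $D_j^*\cup D_{j_0}^*$ contains their bounding box, hence the point $(1-\alpha-3d,1-\alpha-3d)$, which lies outside $R$ because $R$ has a reentrant corner at $(1-\alpha,1-\alpha)$. Thus no admissible $T_{j,j_0}$ exists for this pair, no matter the constants, and ``cone-by-cone reflection'' does not help as long as neighboring cubes straddling the diagonal are still assigned to different faces; also your justification of (B2) (``reflect a cube enclosing $D_j\cup D_{j_0}$'') breaks down for the same pair, since no single face reflection captures both starred cubes. The repair is to change the assignment $D_j\mapsto D_j^*$ near the lower-dimensional strata of $\Gamma$: for all cubes in a fixed conical neighborhood of a vertex (resp.\ edge) of $\Gamma$, place $D_j^*$ inside the corresponding corner (resp.\ edge) block of $R$, e.g.\ by the point reflection through that vertex, and use the face reflection only well away from the strata. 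With the cones chosen wide enough, any two neighboring cubes receive starred cubes lying in a common block or leg of $R$ (all keeping one fixed coordinate beyond $(1-\alpha)\ell/2$), so a common cube $T_{j,j_0}\subset R$ of diameter $\approx\diam D_{j_0}^*$ exists, and (B1), (B3) survive with constants depending only on $n$. With that modification your construction does prove the lemma.
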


We fix $C^1$-functions $0 \le \varphi_j \le 1$ such that 
       $$
        \supp \varphi_j \subset \kappa D_j, \quad |\nabla \varphi_j| \lesssim (\diam D_j)^{-1}, \quad \forall\, x \in R':\, \sum_j \varphi_j(x) = 1.
        $$
From the properties of $D_j$, it follows that every $x \in R'$ has an open neighborhood that intersects the support of only finitely many $\varphi_j$, i.e.\ the sum above is \emph{locally finite}. This allows to differentiate under the summation.

We are ready to use the constructions above to extend functions from $R$ to $2R$. For $u\in \Lip(\overline{R})$, we define
\begin{equation}\label{def:extenstion}
        \overline{u}(x):=\begin{cases}
            u(x), \; &\text{if $x \in \overline{R}$}, \\
            \sum_j u_{\dube_j^*}\varphi_j(x), \; &\text{if $x \in {R'}$}.
        \end{cases}
\end{equation}
\begin{remark}\label{rem:on-Q-indeed}
    Observe that $\overline{u}$ is well defined, by the same formula, on all of $Q$. Indeed, it is Lipschitz on $Q$ as Lemma~\ref{lem:Eu-is-Lip} shows. However, we can prove the desired Sobolev norm bounds only on $R'$; actually on the possibly larger set $\{x: \sum_j \varphi_j(x) = 1\}$ as an inspection of the proof of Lemma~\ref{lem:Eu-bounds:appndx} shows.
\end{remark}
\begin{lemma}\label{lem:Eu-is-Lip}
The function $\overline{u}$ is Lipschitz on $2R=\overline{R} \cup R'$.
\end{lemma}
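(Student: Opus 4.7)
My plan is to prove three separate facts and then combine them: $\overline{u}$ is Lipschitz on $\overline{R}$, $\overline{u}$ is Lipschitz on $R'$, and the two pieces match continuously (and quantitatively) across the common boundary $\overline{R}\cap\overline{R'}$. The first is free since $\overline{u}=u$ on $\overline{R}$ and $u\in\Lip(\overline{R})$; set $L:=\Lip(u)$. The other two require unpacking the Whitney construction of Lemma~\ref{lem:whitn-25}.

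To establish Lipschitz continuity on $R'$ I will derive the pointwise bound $|\nabla\overline{u}(x)|\lesssim L$. Since the partition of unity is locally finite, $\overline{u}$ is $C^1$ on $R'$ with $\nabla\overline{u}(x)=\sum_j u_{D_j^*}\nabla\varphi_j(x)$. Fix $x\in R'$ and pick $j_0$ with $x\in D_{j_0}$. Because $\sum_j\varphi_j\equiv 1$ on a neighborhood of $x$, we have $\sum_j \nabla\varphi_j(x)=0$, so subtracting the constant $u_{D_{j_0}^*}$ gives
\[
\nabla\overline{u}(x)=\sum_{j\in J_0}(u_{D_j^*}-u_{D_{j_0}^*})\,\nabla\varphi_j(x),
\]
where only $j\in J_0$ contribute to the sum. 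For such $j$, property (B2) produces a cube $T_{j,j_0}\subset R$ containing both $D_j^*$ and $D_{j_0}^*$ with $\diam T_{j,j_0}\approx\diam D_{j_0}^*\approx\diam D_j$. Lipschitz continuity of $u$ on $T_{j,j_0}$ yields $|u_{D_j^*}-u_{D_{j_0}^*}|\le L\,\diam T_{j,j_0}\lesssim L\,\diam D_j$, which combined with $|\nabla\varphi_j|\lesssim 1/\diam D_j$ and the bounded overlap (A4) gives $|\nabla\overline{u}(x)|\lesssim L$, with constants depending only on the Whitney construction.

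For continuity across $\overline{R}\cap\overline{R'}$: given $x_0\in\overline{R}\cap\overline{R'}$ and $\eps>0$, apply (A3) to find $r>0$ such that every $D_j$ meeting $B(x_0,r)\cap R'$ has $\diam D_j<\eps$; by (B1) then $\diam D_j^*\lesssim \eps$ and $\dist(D_j^*,x_0)\lesssim |x-x_0|+\eps$. Hence $|u_{D_j^*}-u(x_0)|\lesssim L(|x-x_0|+\eps)$, and since $\sum_j\varphi_j(x)=1$,
\[
|\overline{u}(x)-u(x_0)|=\Bigl|\sum_j(u_{D_j^*}-u(x_0))\,\varphi_j(x)\Bigr|\lesssim L(|x-x_0|+\eps).
\]
Letting $\eps\to 0$ (for each fixed $x$) yields both continuity and the quantitative estimate $|\overline{u}(x)-u(x_0)|\lesssim L|x-x_0|$.

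The final step, which I expect to be the mildest obstacle, is to pass from these three local estimates to a global Lipschitz constant on $2R=\overline{R}\cup R'$. The region $2R$ is a ``square annulus'' between two concentric cubes and is quasiconvex with a constant $c=c(n)$ independent of $Q$ and of $\alpha<1/2$: any two points $x,y\in 2R$ are joined by a rectifiable path in $2R$ of length at most $c|x-y|$. Integrating $|\nabla\overline{u}|\lesssim L$ along such a path (splitting it at crossings of the common boundary and applying either the gradient bound on $R'$, the Lipschitz bound on $\overline{R}$, or the boundary estimate just proved) yields $|\overline{u}(x)-\overline{u}(y)|\lesssim L|x-y|$ with a structural constant. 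The fact that the quasiconvexity constant of $2R$ is uniform for all $\alpha<1/2$ is exactly what makes the argument scale correctly under the iterations required in Proposition~\ref{prop:fullextlip}.
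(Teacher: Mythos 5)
Your argument is correct and follows the same skeleton as the paper's proof: the same cancellation trick giving $\nabla\overline{u}(x)=\sum_{j\in J_0}(u_{\dube_j^*}-u_{\dube_{j_0}^*})\nabla\varphi_j(x)$ on each $D_{j_0}$, the bounds $|\nabla\varphi_j|\lesssim(\diam D_j)^{-1}$ together with (A2), (A4), (B1), (B2), and a final continuity-plus-path gluing argument. The one genuinely different ingredient is how you control $|u_{\dube_j^*}-u_{\dube_{j_0}^*}|$: you use only that $u$ is $L$-Lipschitz on $T_{j,j_0}\subset R$, which gives $|u_{\dube_j^*}-u_{\dube_{j_0}^*}|\le L\,\diam T_{j,j_0}$ directly, whereas the paper invokes Lemma~\ref{prop:avg-difference} (the Poincar\'e-based average-difference estimate) combined with $\|\nabla u\|_{L^\infty(R)}$. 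Your route is more elementary, needing no Poincar\'e inequality at all for this lemma; the paper's choice instead mirrors the computation that is genuinely needed in Lemma~\ref{lem:Eu-bounds:appndx}, where the Lipschitz constant is of no use and the Poincar\'e inequality is essential. You are also more explicit than the paper about the final step, appealing to the quasiconvexity of the square annuli with a constant uniform in $Q$ and in $\alpha<1/2$ to integrate the gradient bound and to glue across the interface, where the paper merely asserts this. One small caveat: your quantitative interface estimate $|\overline{u}(x)-u(x_0)|\lesssim L|x-x_0|$ does not quite follow from (A3) as stated via the ``$\eps\to0$ for fixed $x$'' limiting step, since the radius $r$ shrinks with $\eps$ and the estimate for a fixed $x$ is only available for the finitely many $\eps$ with $|x-x_0|<r(\eps)$; it would require the quantitative Whitney property $\diam D_j\approx\dist(D_j,\overline{R})$, which the construction has but the lemma does not state. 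This is harmless, however, because the gluing argument (yours and the paper's) only needs the qualitative continuity at the interface, which (A3) does provide.
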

\begin{proof}
It is clear from (A3) that  $\overline{u}$ is continuous on $\overline{R} \cup R'$. By the local finiteness of the summation, $\overline{u}$ is $C^1$-regular on $R'$. But we now show that $\overline{u}$ is globally Lipschitz on $R'$.

Fix $D_{j_0}$. By the local finiteness of the summation
\begin{equation}\label{eq:march:1}
    \nabla \overline{u}(x)= {\sum_j}u_{\dube_j^*}\nabla \varphi_j(x) = \sum_{j \in J_0}u_{\dube_j^*}\nabla \varphi_j(x), \quad \text{for every $x \in D_{j_0}$}.
\end{equation}
Recall that
$$
J_0= \{j:\kappa D_j \cap \, D_{j_0} \neq \emptyset\}.
$$
Observe from (A4) that $\textup{Card}(J_0) \lesssim 1$.

From the constructions,
\begin{equation}
    \label{eq:night}
    \sum_{j} \varphi_j(x)=\sum_{j \in J_0} \varphi_j(x) = 1, \quad \text{for every $x \in D_{j_0}$}.
\end{equation}
Taking derivative in \eqref{eq:night} yields
$$
\sum_{j \in J_0} u_{\dube_{j_0}^*} \nabla \varphi_j(x)=u_{\dube_{j_0}^*} \nabla \Bigl(\sum_{j \in J_0}  \varphi_j(x)\Bigr) = 0, \quad \text{for every $x \in D_{j_0}$}.
$$
Together with~\eqref{eq:march:1}, this implies that
\begin{equation}
    \label{eq:night2}
    \nabla \overline{u}(x) = {\sum_{j \in J_0}}(u_{\dube_j^*}-u_{\dube_{j_0}^*})\nabla \varphi_j(x), \quad \text{for every $x \in D_{j_0}$}.
\end{equation}
By condition (B2) and Lemma~\ref{prop:avg-difference}
$$
|u_{\dube_j^*}-u_{\dube_{j_0}^*}| \lesssim (\diam T_{j,j_0})\Bigl(\intavg_{T_{j,j_0}} |\nabla u|^p\, d\mu \Bigr)^{1/p} \lesssim (\diam T_{j,j_0}) \|\nabla u\|_{L^\infty(R)}, \; \text{for every $j \in J_0$}.
$$
Therefore, applying $|\nabla \varphi_j(x)| \lesssim (\diam D_j)^{-1}$, (B1), (B2), and $\textup{Card}(J_0) \lesssim 1$ in \eqref{eq:night2} gives
$$
|\nabla \overline{u}(x)| \lesssim \sum_{j \in J_0} \|\nabla {u}\|_{L^\infty(R)} \lesssim \|\nabla {u}\|_{L^\infty(R)} < \infty, \quad \text{for every $x \in D_{j_0}$},
$$
where the last inequality follows from the fact that $u$ is Lipschitz on $R$.

Since, $D_{j_0}$ was arbitrary, we have a global finite bound on $|\nabla \overline{u}|$. Now, it easily follows, for example by integrating $\nabla \overline{u}$ along suitable paths, that $\overline{u}$ is globally Lipschitz on $R'$.

Since $\overline{u}$ is a continuous gluing of two Lipschitz functions on $\overline{R}$ and $R'$ along their common boundary, we have that $\overline{u}$ is Lipschitz on their union.
\end{proof}
\begin{lemma}\label{lem:Eu-bounds:appndx}
    The extension $\overline{u}$ satisfies
    $$
    \|\overline{u}\|_{L^p(R';\mu)} \le C_1\|u\|_{L^p(R;\mu)}, \: \quad \| \nabla \overline{u} \|_{L^p(R';\mu)} \le  C_1 \|\nabla u\|_{L^p(R;\mu)},
    $$
    where $C_1>0$ is a constant that does not depend on $\alpha$ or $Q$ or $u$.
\end{lemma}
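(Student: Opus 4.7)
My plan is to treat the two estimates in parallel, working cube by cube on the Whitney decomposition $\{D_{j_0}\}$ of $R'$ and then summing, exploiting the bounded overlap properties (A4) and (B3) together with doubling.

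For the $L^p$ bound on $\overline{u}$, I would fix $D_{j_0}$ and note that on this cube only finitely many $\varphi_j$ (namely $j \in J_0$) contribute, and moreover $\sum_{j\in J_0}\varphi_j \equiv 1$ there. Hence by convexity of $t \mapsto |t|^p$,
\begin{equation*}
|\overline{u}(x)|^p = \Bigl|\sum_{j\in J_0} u_{\dube_j^*}\varphi_j(x)\Bigr|^p \le \sum_{j\in J_0} |u_{\dube_j^*}|^p \varphi_j(x) \le \sum_{j\in J_0}|u_{\dube_j^*}|^p.
\end{equation*}
Jensen's inequality gives $|u_{\dube_j^*}|^p \le \mu(D_j^*)^{-1}\int_{D_j^*}|u|^p\,d\mu$, and property~(B1) together with doubling yields $\mu(D_{j_0}) \approx \mu(D_j^*)$ for all $j \in J_0$ (since $\diam D_j \approx \diam D_{j_0}$ by (A2) and $\dist(D_j,D_j^*)\approx \diam D_j$). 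Integrating over $D_{j_0}$ and summing over $j_0$, the finite-overlap property of the pairs $(j_0,j\in J_0)$ that one reads off from (A4) and (A2) produces the bound $\int_{R'}|\overline{u}|^p\,d\mu \lesssim \int_R |u|^p\,d\mu$.

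For the gradient bound, the essential identity was already established inside the proof of Lemma~\ref{lem:Eu-is-Lip}, namely
\begin{equation*}
\nabla \overline{u}(x) = \sum_{j\in J_0}(u_{\dube_j^*}-u_{\dube_{j_0}^*})\nabla\varphi_j(x), \quad x\in D_{j_0}.
\end{equation*}
Combining $|\nabla \varphi_j|\lesssim (\diam D_j)^{-1}$ with Lemma~\ref{prop:avg-difference} applied to the cubes $D_j^*, D_{j_0}^* \subset T_{j,j_0}$ of (B2) (whose diameters are all comparable to $\diam D_j$ by (B1)-(B2)), I obtain the pointwise estimate
\begin{equation*}
|\nabla \overline{u}(x)|^p \lesssim \sum_{j\in J_0}\intavg_{T_{j,j_0}}|\nabla u|^p\,d\mu.
\end{equation*}
Integrating over $D_{j_0}$ and using $\mu(D_{j_0})\lesssim \mu(T_{j,j_0})$ (from doubling, since $T_{j,j_0}$ sits at comparable scale and bounded distance from $D_{j_0}$), then summing over $j_0$ and invoking the bounded-overlap condition~(B3), I conclude $\int_{R'}|\nabla \overline{u}|^p\,d\mu \lesssim \int_R|\nabla u|^p\,d\mu$.

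The main obstacle is keeping all the comparabilities of $\mu$-measures honest as one moves between cubes $D_j, D_j^*, D_{j_0}, T_{j,j_0}$ lying in different places relative to $R$ and $R'$; this is where doubling and the geometric properties (A2), (B1), (B2) must be combined carefully. The scale-uniformity of the final constant $C_1$ (independent of $Q$ and $\alpha$) follows because every estimate above is invoked with constants that only depend on the doubling constant, the Poincaré constant, and the structural constants $\kappa$ and overlap bounds of Lemma~\ref{lem:whitn-25}. No step involves dilating $Q$ or $R$, so the bound survives uniformly over small cubes and thin rings, exactly as required for the iteration in Proposition~\ref{prop:fullextlip}.
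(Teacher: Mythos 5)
Your proof is correct and takes essentially the same approach as the paper: the gradient estimate follows the identical chain (the identity from Lemma~\ref{lem:Eu-is-Lip}, $|\nabla \varphi_j|\lesssim (\diam D_j)^{-1}$, Lemma~\ref{prop:avg-difference} applied on the cubes $T_{j,j_0}$, doubling, $\mathrm{Card}(J_0)\lesssim 1$, and the overlap property (B3)), while your Jensen/convexity argument supplies the $\|\overline{u}\|_{L^p}$ bound that the paper omits as a routine similar computation. One cosmetic remark: the bounded overlap you need for that first estimate is exactly (B3), since $D_j^*\subset T_{j,j_0}$ by (B2), which is cleaner than trying to read it off from (A2) and (A4).
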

\begin{proof}
We begin as in the proof of Lemma~\ref{lem:Eu-is-Lip}, and reach
$$
    \nabla \overline{u}(x) = {\sum_{j \in J_0}}(u_{\dube_j^*}-u_{\dube_{j_0}^*})\nabla \varphi_j(x), \quad \text{for all $x \in D_{j_0}$}.
$$
Recall that $J_0= \{j:\kappa D_j \cap \, D_{j_0} \neq \emptyset\}$ and $\textup{Card}(J_0) \lesssim 1$.

So, by the triangle inequality for the $L^p$-norm
 \begin{align*}
   \|\nabla \overline{u}\|_{L^p(D_{j_0};\mu)} & \le {\sum_{j \in J_0}} |u_{\dube_j^*}-u_{\dube_{j_0}^*}| \|\nabla \varphi_j\|_{L^p(D_{j_0};\mu)} \\
   & \lesssim \sum_{j \in J_0} |u_{\dube_j^*}-u_{\dube_{j_0}^*}|({\diam D_j})^{-1} (\mu(\kappa D_j))^{1/p}.
 \end{align*}
Again, in the proof of Lemma~\ref{lem:Eu-is-Lip} we showed that
$$
|u_{\dube_j^*}-u_{\dube_{j_0}^*}| \lesssim (\diam T_{j,j_0})\Bigl(\intavg_{T_{j,j_0}} |\nabla u|^p\, d\mu \Bigr)^{1/p}.
$$
The doubling of the measure, (B1) and (B2) imply that
$$
\mu(\kappa D_j) \approx \mu(T_{j,j_0}).
$$
Thus,
\begin{align*}
   \|\nabla \overline{u}\|_{L^p(D_{j_0};\mu)} \lesssim \sum_{j \in J_0} \Bigl(\int_{T_{j,j_0}} |\nabla u|^p\, d\mu \Bigr)^{1/p}.
\end{align*}
Since $\textup{Card}(J_0) \lesssim 1$,
$$
\|\nabla \overline{u}\|^p_{L^p(D_{j_0};\mu)} \lesssim \sum_{j \in J_0} \int_{T_{j,j_0}} |\nabla u|^p\, d\mu.
$$
Hence,
 \begin{equation*}\label{eachcube}
     \|\nabla \overline{u}\|^p_{L^p(R';\mu)} \le \sum_{j_0} \|\nabla \overline{u}\|^p_{L^p(D_{j_0};\mu)} \lesssim \sum_{j_0}  \sum_{j \in J_0} \int_{T_{j,j_0}} |\nabla u|^p\, d\mu .
 \end{equation*}
By (B3), this yields
 \begin{equation*}
     \|\nabla \overline{u}\|^p_{L^p(R';\mu)} \lesssim \int_{R} |\nabla u|^p\, d\mu.
 \end{equation*}
We have proved the desired estimate on the norm of gradients, namely,
$$
\|\nabla \overline{u}\|_{L^p(R';\mu)} \lesssim \|\nabla u\|_{L^p(R;\mu)}.
$$
The proof of the remaining estimate, i.e.
 \begin{equation*}
     \|\overline{u}\|_{L^p(R';\mu)} \lesssim \|u\|_{L^p(R;\mu)},
 \end{equation*}
with similar calculations is straightforward and we omit it. Proof of Lemma~\ref{lem:Eu-bounds:appndx} is complete.
\end{proof}
\begin{proof}[Proof of Lemma~\ref{lem:reflect-one-time}]
    For $u \in \Lip(\overline{R})$, let $Eu:=\overline{u}$, with $\overline{u}$ defined in \eqref{def:extenstion}. Clearly, $E$ is linear and in Lemma~\ref{lem:Eu-is-Lip} we proved that $Eu$ is in $\Lip(2R)$. The inequalities in Lemma~\ref{lem:Eu-bounds:appndx} complete the proof of Lemma~\ref{lem:reflect-one-time} with $E$ as the claimed extension operator.
\end{proof}

\begin{proof}[Proof of Lemma~\ref{lem:half-Q-ring}]
As explained in Remark~\ref{rem:on-Q-indeed}, the extension $\overline{u}$ is well-defined on $Q$ and satisfies the inequalities in 
Lemma~\ref{lem:Eu-bounds:appndx} on the set where $\sum_j \varphi_j \equiv 1$. When the ring is large, i.e.\ $\alpha_{\cube}\ge 1/2$, we can modify the cubes of Lemma~\ref{lem:whitn-25} and require further that they cover $Q\setminus \overline{R}$. Then the extension $Eu:=\overline{u}$ satisfies the claims of Lemma~\ref{lem:half-Q-ring}.
\end{proof}
\section{Appendix}\label{sec;appndx}
This section is devoted to the proof of a result that was already used at the end of the proof of Theorem~\ref{jy1}.
\begin{proposition}\label{P1p=H1p}
Fix $p\ge 1$ and $d\mu=wdx$ with a $p$-admissible weight $w$ on $\mathbb R^n$. Suppose $u \in H^{1,p}(Q;\mu)$ for a cube $Q \subset \mathbb R^n$. If $u \in H^{1,p}(Q;\mu)$ and for some $s>p$ we have $u \in L^s(Q;\mu)$ and $|\nabla u| \in L^s(Q;\mu)$, then $u \in H^{1,s}(Q;\mu)$.
\end{proposition}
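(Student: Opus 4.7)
The plan is to prove Proposition~\ref{P1p=H1p} by bootstrapping via a pointwise Hajłasz-type inequality for $u$, whose right-hand side sits in $L^s(\mu)$ by maximal-function boundedness, and then exhibiting a Lipschitz approximation of $u$ that converges in $H^{1,s}(Q;\mu)$. The main obstacle is the final $L^s$-convergence step; one robust route uses the Newtonian-Sobolev identification noted in Remark~\ref{reM:N1p=H1p}, which bypasses some delicate estimates.

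Let $g := |\nabla u|$ be the gradient given by the embedding~\eqref{embed-H1p}. Since $w$ is $p$-admissible, by Proposition~\ref{prop:q-admiss} it is also $s$-admissible, so the Newtonian space coincides with $H^{1,p}$ and with $H^{1,s}$ (Remark~\ref{reM:N1p=H1p}). First I would establish a Hajłasz-type pointwise inequality: there exists a $\mu$-null set $N \subset Q$ such that for $\mu$-a.e.\ $x,y$ belonging to a common cube compactly contained in $Q$,
\begin{equation*}
 |u(x)-u(y)| \leq C\, |x-y|\,\bigl( M(g^p)(x)^{1/p} + M(g^p)(y)^{1/p} \bigr),
\end{equation*}
where $M$ is a local Hardy-Littlewood maximal operator with respect to $\mu$. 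This is derived in the standard way: apply Lemma~\ref{cor:p-PI-for-H} to smooth approximants $u_i$ of $u$, chain through a dyadic telescope of cubes shrinking to $x$ and to $y$, and pass to an a.e.-convergent subsequence.

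Next, since $s/p>1$ and $\mu$ is doubling, the maximal operator $M$ is bounded on $L^{s/p}(\mu)$; because $g^p \in L^{s/p}(\mu)$, we get $G := M(g^p)^{1/p} \in L^s(\mu)$. Set $E_\lambda := \{ x \in Q \setminus N : G(x) \leq \lambda\}$. The pointwise inequality above says $u|_{E_\lambda}$ is $2C\lambda$-Lipschitz; extend it to a $2C\lambda$-Lipschitz function $u_\lambda$ on $\mathbb R^n$ by McShane's formula. By Theorem~\ref{thm:loclipRH}, $u_\lambda \in H^{1,s}(Q;\mu)$. Since $G \in L^s(\mu)$, Chebyshev gives $\mu(Q \setminus E_\lambda) \to 0$, and $u_\lambda \to u$ a.e.\ on the set of full measure $\bigcup_\lambda E_\lambda$.

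It remains to prove $u_\lambda \to u$ in $H^{1,s}(Q;\mu)$. Outside $E_\lambda$, $|\nabla u_\lambda| \leq 2C\lambda \leq 2C\,G$ a.e.\ (where defined), and $|\nabla u_\lambda| = |\nabla u|$ $\mu$-a.e.\ on $E_\lambda$ (Theorem~\ref{thm:loclipRH} combined with a Lebesgue-differentiation argument). Combined with $|\nabla u|, G \in L^s$, this yields an $L^s$-integrable majorant for $|\nabla u - \nabla u_\lambda|$, so dominated convergence gives $\|\nabla u_\lambda - \nabla u\|_{L^s(Q;\mu)} \to 0$. An analogous majorant argument, using boundedness of $u|_{E_\lambda}$ together with the McShane formula, produces the $L^s$-convergence $u_\lambda \to u$. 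The hard part here is the pointwise/$L^s$ control of $u_\lambda$ off $E_\lambda$; as a cleaner alternative, one can instead argue purely through the Newtonian characterization by showing that the minimal $p$-weak upper gradient $g$ is automatically an $s$-weak upper gradient when $g \in L^s(\mu)$ on a PI-space, which yields $u \in N^{1,s}(Q;\mu) = H^{1,s}(Q;\mu)$ immediately from $u, g \in L^s$. Either route finishes the proof.
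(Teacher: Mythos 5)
Your main route is correct in outline, but it is genuinely different from the paper's proof. The paper approximates $u$ by discrete convolutions $u_r=\sum_k u_{\cube_k}\varphi_k$ over a partition of $Q$ into small cubes, bounds $\|\nabla u_r\|_{L^s}$ via Lemma~\ref{prop:avg-difference}, proves $u_r\to u$ in $L^s$ using the $(p,p)$-Poincar\'e inequality (Lemma~\ref{pp-Poin}), and then concludes with weak compactness plus Mazur's lemma; no maximal functions and no pointwise Lipschitz-truncation are used. You instead run the Haj\l{}asz pointwise estimate, restrict to the superlevel-complement $E_\lambda=\{M(g^p)^{1/p}\le\lambda\}$, McShane-extend, and pass to the limit by dominated convergence. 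This works: off $E_\lambda$ one has $\lambda\le G$, so the Lipschitz bound $2C\lambda\,\diam Q$ is dominated by $2C\,\diam Q\,G\in L^s$, which (together with boundedness of the Lipschitz function $u|_{E_\lambda}$ on the bounded set $E_\lambda$) gives the $L^s$-majorant you need; and $\mu(Q\setminus E_\lambda)\to0$ finishes both convergences. Your approach buys a constructive approximation without Mazur's lemma, at the price of the maximal-function machinery and of two facts you should justify explicitly: the chaining argument only gives the pointwise inequality for $x,y$ in cubes compactly contained in $Q$, so you should conclude $u\in H^{1,s}(\lambda Q;\mu)$ for $\lambda<1$ and then invoke Proposition~\ref{prop:H1p-loc+Lp=H1p}; and the identity $\nabla u_\lambda=\nabla u$ $\mu$-a.e.\ on $E_\lambda$ is not automatic from the paper's definition of $\nabla u$ as an $L^p$-limit of smooth gradients --- it requires the locality of minimal weak upper gradients via the identification $H^{1,p}=N^{1,p}$ of Remark~\ref{reM:N1p=H1p} (or an equivalent argument), so cite that rather than a ``Lebesgue-differentiation argument.''

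One genuine warning: your proposed ``cleaner alternative'' --- that the minimal $p$-weak upper gradient lying in $L^s$ is automatically an $s$-weak upper gradient --- cannot be used as a shortcut. The exceptional curve family for a $p$-weak upper gradient has $p$-modulus zero, and $p$-modulus zero does \emph{not} in general imply $s$-modulus zero for $s>p$ (the H\"older comparison of moduli goes the other way); indeed, the exponent-independence of minimal weak upper gradients fails without a Poincar\'e inequality and, on PI spaces, is itself a nontrivial theorem of essentially the same depth as Proposition~\ref{P1p=H1p}. So that sentence asserts, rather than proves, the conclusion; stick with your truncation argument (or the paper's discrete-convolution argument), which does the actual work.
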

This answers Question~\ref{areULq} in the affirmative for $p$-admissible weights. While, the analogous claim for $W^{1,p}$ functions would follow immediately from the definitions, for $H^{1,p}$ functions, our proof of Proposition~\ref{P1p=H1p} relies on some techniques from analysis on metric spaces. The proof is inspired by the proofs in \cite{Fr-Haj-Kos} (see also \cite[Theorem~10.3.4]{HKST:15}). In fact, one might be able to deduce Proposition~\ref{P1p=H1p} from \cite[Theorem~10]{Fr-Haj-Kos} which claims that $H^{1,p}$ is equal to the so-called Poincar\'e-Sobolev space $P^{1,p}$ under a suitable Poincar\'e inequality.
\begin{remark}
Proposition~\ref{P1p=H1p} might be true for $p$-admissible \emph{measures} as well, i.e.\ without absolute continuity with respect to the Lebesgue measure, as the proof seems to work in that case as well. Since this would require making sure that that all the lemmata from Section~\ref{sec:prelim} that are appealed to in the proof, explicitly or implicitly, remain valid for $p$-admissible measures, we content ourselves with $p$-admissible weights.
\end{remark}
We will need the fact that a $p$-Poincar\'e inequality implies ``a $(p,p)$-Poincar\'e inequality'' \cite[Theorem~9.1.15, Remark~9.1.19]{HKST:15}.
\begin{remark}\label{reM:N1p=H1p}
    The meaning of a $p$-Poincar\'e inequality in the metric-measure setting of \cite{HKST:15} is very different to the one in this paper, but by the results in \cite[Appendix~A2]{Bj:Bj:11}, our space $H^{1,p}$ coincides with the Newtonian-Sobolev space $N^{1,p}$ on the metric-measure space $(\mathbb R^n,|x-y|,\mu)$. Moreover, the bi-Lipschitz change of the Euclidean norm to the supremum norm, so that the metric balls become cubes, will not nullify the validity of the $(p,p)$-Poincar\'e inequality in Lemma~\ref{pp-Poin}.
\end{remark} 
\begin{lemma}\label{pp-Poin}
If $d\mu=wdx$ for a $p$-admissible weight $w$ on $\mathbb R^n$, then there exists $C>0$ such that    
    $$
    \int_Q |u(x)-u_{\cube}|^p\, d\mu \le C (\diam Q)^p \int_Q |\nabla u|^p\, d\mu,
    $$
for all cubes $Q \subset \mathbb R^n$ and all $u \in H^{1,p}_{loc}(\mathbb R^n)$.
\end{lemma}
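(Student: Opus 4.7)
The plan is to derive the $(p,p)$-Poincar\'e inequality as a self-improvement of the $(1,p)$-Poincar\'e inequality, which is available to us through Lemma~\ref{cor:p-PI-for-H}. This type of self-improvement of Poincar\'e inequalities under the doubling hypothesis is a standard fact in the analysis on metric measure spaces, going back to Haj\l{}asz--Koskela, and is precisely \cite[Theorem 9.1.15, Remark 9.1.19]{HKST:15}. The cleanest route in our setting is to appeal to Remark~\ref{reM:N1p=H1p} and identify $H^{1,p}(\mathbb R^n;\mu)$ with the Newtonian--Sobolev space $N^{1,p}$ on the metric measure space $(\mathbb R^n,|\cdot|,\mu)$, then invoke this theorem; the bi-Lipschitz replacement of the Euclidean norm by the supremum norm turns the metric balls of \cite{HKST:15} into our cubes without affecting any of the hypotheses or conclusions.

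To make the argument somewhat self-contained, I would first reduce to Lipschitz $u$ by density (Corollary~\ref{cor:lip-are-dense}), noting that both sides of the asserted inequality are continuous with respect to $H^{1,p}$-convergence. Then, for Lipschitz $u$, for each $x\in Q$ I would build a dyadic chain $\{Q_k\}_{k\ge0}$ with $Q_0=Q$, $x\in Q_k$, and $\ell(Q_{k+1})=\ell(Q_k)/2$. Lebesgue differentiation (valid for doubling $\mu$) yields $u_{Q_k}\to u(x)$, so telescoping combined with Lemma~\ref{prop:avg-difference} gives the pointwise estimate
\[
|u(x)-u_Q| \le C(\diam Q)\sum_{k=0}^{\infty} 2^{-k}\Bigl(\intavg_{Q_k}|\nabla u|^p\, d\mu\Bigr)^{1/p}
\le C(\diam Q)\bigl(M_Q(|\nabla u|^p)(x)\bigr)^{1/p},
\]
where $M_Q$ is the dyadic-restricted maximal operator on $Q$.

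The main obstacle is that raising this estimate to the $p$-th power and integrating would require the $L^1\to L^1$ boundedness of the maximal function, which fails in general. The standard way around this is the Haj\l{}asz--Koskela truncation device: one applies the chaining bound not to $u$ directly, but to the dyadic truncations $u_{a,b}:=\min(b,\max(a,u))$, whose weak gradient is $|\nabla u|\chi_{\{a<u<b\}}$; summing over a dyadic partition of the range of $u-u_Q$ and exploiting the doubling of $\mu$ together with iteration of the $(1,p)$-Poincar\'e inequality allows the strong $(p,p)$-inequality to be deduced from what is really only a weak-type estimate. This truncation and summation step is the technical heart of the argument, and is carried out in full detail in \cite[Theorem 9.1.15]{HKST:15}; I would simply quote it rather than redo the bookkeeping. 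The upgrade from Lipschitz to $u\in H^{1,p}_{loc}(\mathbb R^n)$ is then immediate by the approximation set up in the first paragraph.
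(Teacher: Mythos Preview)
Your proposal is correct and matches the paper's approach exactly: the paper does not give a proof of this lemma but simply cites \cite[Theorem~9.1.15, Remark~9.1.19]{HKST:15} together with Remark~\ref{reM:N1p=H1p} (the identification $H^{1,p}=N^{1,p}$ and the bi-Lipschitz passage from the Euclidean to the supremum norm so that balls become cubes), which is precisely your ``cleanest route.'' Your additional sketch of the Haj\l asz--Koskela truncation argument behind that theorem is more than the paper provides, but is a faithful outline of what is being cited.
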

\begin{proof}[Proof of Proposition~\ref{P1p=H1p}]
According to Proposition~\ref{prop:H1p-loc+Lp=H1p}, it suffices to prove that $u \in H^{1,s}(\lambda Q;\mu)$ for every $\lambda <1$. Thus, it is sufficient prove the following; let $Q':=(1+\eps)Q$, $\eps >0$.

\textbf{Claim A:} if $u \in H^{1,p}(Q';\mu)$, $u \in L^{s}(Q';\mu)$ and $|\nabla u| \in L^{s}(Q';\mu)$, then $u \in H^{1,s}(Q;\mu)$.

Without loss in generality, assume $\ell(Q)=1$. Fix $r \in \mathbb N$ and divide $Q$ into congruent sub-cubes $Q_k$, each with side-length $r^{-1}$, and with pairwise disjoint interiors. Choose $C^1$-functions $\varphi_k$ such that $\textup{supp}\, \varphi_k \subset (3/2)Q_k $, $|\nabla \varphi_k(x)| \lesssim r$, and $\sum_k \varphi_k(x) \equiv 1$ on $Q$. (Except for finitely many $r$, it will be true that $(3/2)Q_k \in Q'$, so, without loss of generality, we assume this is true for all $r$.)

\textit{1. Construction of an approximating sequence.} We define on $Q$ the function\footnote{Such objects have been called \textit{discrete convolutions} and are standard.}
\begin{equation}
    u_r(x)= \sum_{k} u_{\cube_k}\varphi_{k}(x).
\end{equation}
Clearly, $u_r$ are $C^1$-regular on $Q$, thus, in $H^{1,s}(Q;\mu)$. We will prove that ``basically'' a subsequence of $u_r, r=1,2,\cdots$ converges in $H^{1,s}(Q;\mu)$ to $u$, and this will prove that $u \in H^{1,s}(Q;\mu)$ as desired.

\textit{2. Boundedness of $\|\nabla u_r\|_{L^s(Q;\mu)}$.} We begin by showing that $u_r, r=1,2,\cdots$ is a bounded sequence in  $H^{1,s}(Q;\mu)$. With $r$ still fixed, fix a sub-cube $Q_i$ and let $x$ and $y$ be in $\textup{int}(Q_i)$. Then
\begin{align*}
    |u_r(y)-u_r(x)|&= \Bigl|\sum_{k}u_{\cube_k}(\varphi_k(y)-\varphi_{k}(x))\Bigr|\\
    &= \Bigl|\sum_{k}(u_{\cube_k}-u_{\cube_{i}})(\varphi_k(y)-\varphi_{k}(x))\Bigr|.
\end{align*}
Let $I:=\{k: (3/2)Q_k \cap Q_i \neq \emptyset\}$. So, by properties of $\varphi_k$,
\begin{align}\label{eq:toda1}
    |u_r(y)-u_r(x)| \lesssim  {|x-y|}  \sum_{k \in I}{r}|u_{\cube_k}-u_{\cube_{i}}|.
\end{align}
Choose a cube $\widetilde{Q}_i \subset Q$ such that $Q_i \cup \bigcup_{k \in I} Q_k \subset \widetilde{Q}_i,$ while $\diam \widetilde{Q}_i \approx \diam Q_i$. By Lemma~\ref{prop:avg-difference}
\begin{align}\label{eq:toda2}
|u_{\cube_k}-u_{\cube_{i}}| \lesssim r^{-1}\Bigl(\intavg_{\widetilde{Q}_i}|\nabla u|^p d\mu\Bigr)^{\frac{1}{p}}, \quad \text{for every $k \in I$.}
\end{align}
There is an upper bound, independent of $i$ and $r$, on $\textup{Card}(I)$. Thus, from \eqref{eq:toda1} and \eqref{eq:toda2}
$$
|u_r(y)-u_r(x)| \lesssim  {|x-y|} \Bigl(\intavg_{\widetilde{Q}_i}|\nabla u|^p d\mu\Bigr)^{\frac{1}{p}}.
$$
Remember that $u_r$ are $C^1$-regular, hence differentiable. After dividing by $|x-y|$ in the last inequality and taking limit as $y \to x$, we conclude that 
$$
|\nabla u_r(x)| \lesssim \Bigl(\intavg_{\widetilde{Q}_i}|\nabla u|^p d\mu\Bigr)^{\frac{1}{p}}, \quad \text{for every $x \in \textup{int}(Q_i)$.}
$$
Applying H\"older's inequality to the right-hand side gives
$$
|\nabla u_r(x)| \lesssim \Bigl(\intavg_{\widetilde{Q}_i}|\nabla u|^s d\mu\Bigr)^{\frac{1}{s}}, \quad \text{for every $x \in \textup{int}(Q_i)$.}
$$
Since, $i$ was arbitrary, $\cup \widetilde{Q}_i = Q$, and there is controlled overlap among $\widetilde{Q}_i$, we obtain
\begin{align*}
   \|\nabla u_r\|_{L^s(Q;\mu)} \lesssim \|\nabla u\|_{L^s(Q;\mu)} .
\end{align*}
Since the right-hand side is independent of $r$ and finite, we have proved that $|\nabla u_r|, r=1,2,\cdots$ is bounded in $L^{s}(Q;\mu)$. 

\textit{3. Convergence of $u_r$ in $L^{s}(Q;\mu)$.} 
With notation as above, we have
\begin{align*}
    u_r(x)-u(x) = \sum_{k \in I} (u_{\cube_k}-u(x))\varphi_k(x), \quad \text{for every $i$ and every $x \in \textup{int}(Q_i)$}.
\end{align*}
Hence,
\begin{align}\label{eq:day3}
    \|u_r-u\|_{L^{s}(Q;\mu)} \le \sum_{k \in I} \|u-u_{\cube_k}\|_{L^{s}(Q;\mu)}.
\end{align}
Now, for every $k \in I$ by simple estimates
\begin{align*}
\|u-u_{\cube_k}\|_{L^{s}(Q_i;\mu)}^s &= \int_{Q_i} |u(x)-u_{\cube_k}|^sd\mu(x) \\
                                    &\lesssim \int_{Q_i} \Bigl(|u(x)-u_{\widetilde{\cube}_i}|^s +|u_{\widetilde{\cube}_i}-u_{\cube_k}|^s \Bigr)d\mu(x).
\end{align*}
Thus,
\begin{equation}\label{day:eq4}
  \|u-u_{\cube_k}\|_{L^{s}(Q_i;\mu)}^s \lesssim \int_{\widetilde{Q}_i} |u(x)-u_{\widetilde{\cube}_i}|^s d\mu(x) + \mu(Q_i)|u_{\widetilde{\cube}_i}-u_{\cube_k}|^s.
\end{equation}
Since $w$ is $s$-admissible (Proposition~\ref{prop:q-admiss}), by Lemma~\ref{pp-Poin} 
\begin{equation}\label{day:eq6}
\int_{\widetilde{Q}_i} |u(x)-u_{\widetilde{\cube}_i}|^s d\mu(x) \lesssim r^{-s} \int_{\widetilde{Q}_i} |\nabla u|^{s}\,d\mu, \quad \text{for every $k \in I$}.
\end{equation}
On the other hand, by Lemma~\ref{prop:avg-difference} and H\"older's inequality
$$
|u_{\widetilde{\cube}_i}-u_{\cube_k}| \lesssim r^{-1} \Bigl(\intavg_{\widetilde{Q}_i}|\nabla u|^p d\mu\Bigr)^{\frac{1}{p}} \le r^{-1}\Bigl(\intavg_{\widetilde{Q}_i}|\nabla u|^s d\mu\Bigr)^{\frac{1}{s}}, \quad \text{for every $k \in I$},
$$
hence
\begin{equation}\label{day:eq7}
|u_{\widetilde{\cube}_i}-u_{\cube_k}|^s \lesssim r^{-s}\Bigl(\intavg_{\widetilde{Q}_i}|\nabla u|^s d\mu\Bigr)^{\frac{1}{s}}, \quad \text{for every $k \in I$}.
\end{equation}
Using \eqref{day:eq7} and \eqref{day:eq6} in \eqref{day:eq4} yields
$$
\|u-u_{\cube_k}\|_{L^{s}(Q_i;\mu)}^s \lesssim r^{-s} \int_{\widetilde{Q}_i} |\nabla u|^{s}\,d\mu, \quad \text{for every $k \in I$}.
$$
Since the right-hand side is independent of $k$, applying these inequalities in \eqref{eq:day3} and using $\textup{Card}(I) \lesssim 1$ we get
\begin{align*}
    \|u_r-u\|_{L^{s}(Q;\mu)}^s \lesssim r^{-s}\int_{\widetilde{Q}_i} |\nabla u|^s\,d\mu.
\end{align*}
Summing the last inequality and appealing to the controlled overlap among $\widetilde{Q}_i$ yields
\begin{align*}
    \|u_r-u\|_{L^{s}(Q;\mu)}^s &= \sum_i \|u_r-u\|_{L^{s}(Q_i;\mu)}^s \\
                                &\lesssim r^{-s} \sum_i \int_{\widetilde{Q}_i} |\nabla u|^s\,d\mu\\
                                &\lesssim r^{-s} \int_{Q} |\nabla u|^s\,d\mu.
\end{align*}
Since, the $|\nabla u| \in L^s(Q;\mu)$ by assumptions and the various constants are independent of $r$, this proves that
$$
\lim_{r \to \infty} \|u_r-u\|_{L^{s}(Q;\mu)} = 0.
$$
\textit{4. Convergence in $H^{1,s}(Q;\mu)$.} We have proved that $u_r, r=1,2,\cdots$ is a bounded sequence in $H^{1,s}(Q;\mu)$. By sequential weak compactness, in the sense of Lemma~\ref{lem:weak-conv}, there exists a subsequence that converges weakly in $L^{s}(Q;\mu)$ and whose gradients converge weakly in $L^{s}(Q;\mu)^n$. By Mazur's lemma, a subsequence $v_j, j=1,2,\cdots$, where each $v_j$ is a (finite) convex combination of $u_r$ converges in $H^{1,s}(Q;\mu)$.

In the claim of Mazur's lemma, one can require that the convex combinations use only elements further and further in the tail. Appealing to this, we can further guarantee that $v_j$ converge to $u$ in $L^s(Q;\mu)$, since $u_r$ do. This shows that the limit of $v_j$ agrees $\mu$-a.e.\ with $u$. This proves that $u \in H^{1,s}(Q;\mu)$. Proof of Claim A, hence of Proposition~\ref{P1p=H1p}, is complete.
\end{proof}

\section*{Acknowledgment} We are grateful to Prof.\ Koskela for suggesting the problem and his insightful discussions. We thank Prof.\ Lahti for his remarks on an earlier draft and for pointing out some corrections.

\bibliographystyle{alpha}
\bibliography{Bibliography-Behnam-25}

\end{document}